\newtheorem{satz}{Theorem}[section]
\newtheorem{thm}[satz]{Theorem}
\newtheorem{lemma}[satz]{Lemma}
\newtheorem{kor}[satz]{Corollary}
\theoremstyle{definition}
\newtheorem{Def}[satz]{Definition}
\newtheorem{defn}[satz]{Definition}
\theoremstyle{remark}
\newtheorem{bem}[satz]{Remark}
\newcommand{\dq}{\textrm{\textit{\dj}} }                                           
\newcommand{\<}[1]{\langle #1 \rangle}                                    
\newcommand{\op}{OP}                                             
\newcommand{\e}{\varepsilon}                                              
\newcommand{\id}{\textrm{Id}}                                             
\newcommand{\supp}{\textrm{supp }}                                        
\newcommand{\codim}{\textrm{codim }}                                      
\renewcommand{\Im}{\textrm{Im}}                                            
\newcommand{\Span}{\textrm{span }}                                        
\newcommand{\ind}{\textrm{ind}}                                          
\newcommand{\Sallg}[5]{S^{#1}_{#2,#3}(\R^{#4}\times\R^{#5})}              
\newcommand{\Sn}[3]{\Sallg{#1}{#2}{#3}{n}{n}}                             
\newcommand{\pa}[1]{\partial_{\xi}^{#1}}                                  
\newcommand{\p}{\partial}
\newcommand{\pax}[2]{\partial_{\xi}^{#1}\partial_{x}^{#2}}                
\newcommand{\skh}[3]{{\langle #1,#2 \rangle}_{#3}}                        
\newcommand{\R}{\mathbb{R}}                                               
\newcommand{\Rn}{\mathbb{R}^n}                                            
\newcommand{\RnRn}{\R^n \times \R^n}                                      
\newcommand{\RnRnx}[2]{\R^n_{#1} \times \R^n_{#2}}                         
\newcommand{\intr}{\int \limits_{\Rn} }                                   
\newcommand{\osint}{\textrm{Os\hspace*{0,1cm}-}\hspace{-0,15cm}\iint}     
\newcommand{\s}{\mathcal{S}(\R^n)}                                        
\newcommand{\sd}{\mathcal{S'}(\R^n)}                                      
\newcommand{\N}{\mathbb{N}}                                               
\newcommand{\Z}{\mathbb{Z}}                                               
\newcommand{\Non}{\mathbb{N}_0^n}                                               
\newcommand{\C}{\mathbb{C}}                                               
\title{Invariance of the Fredholm Index and Spectrum of Non-Smooth Pseudodifferential Operators}
\author{Helmut Abels\footnote{Fakult\"at f\"ur Mathematik,  
Universit\"at Regensburg,
93040 Regensburg,
Germany, e-mail: {\sf helmut.abels@ur.de}}\ \ and Christine Pfeuffer\footnote{Institute f\"ur Mathematik,  
Martin-Luther-Universität Halle-Wittenberg,
06099 Halle (Saale),
Germany}}
\date{}
\begin{document}

\maketitle

\begin{abstract}
  In this paper we show the invariance of the Fredholm index of non-smooth pseudodifferential operators with coefficients in Hölder spaces. By means of this invariance we improve previous spectral invariance results for non-smooth pseudodifferential operators $P$ with coefficients in Hölder spaces. For this purpose we approximate $P$ with smooth pseudodifferential operators and use a spectral invariance result of smooth pseudodifferential operators.  Then we get the spectral invariance result in analogy to a proof of the spectral invariance result for non-smooth differential operators by Rabier.

\end{abstract}

\noindent
{\bf Key words:} Non-smooth pseudodifferential operators, Fredholm index, spectral invariance \\
 {\bf AMS-Classification:} 35 S 05, 47 A53, 47 G 30


\section{Introduction}

Non-smooth pseudodifferential operators arise naturally in the field of nonlinear partial differential equations. If the inverse of a non-smooth pseudodifferential operator $P$ exists we immediately get some existence results  for the partial differential equation $Pu=f$ where $f$ is a given suitable distribution. If we even can show, that the inverse of $P$ is a non-smooth pseudodifferential operator again, we immediately get regularity results for $Pu=f$ by means of the mapping properties of non-smooth pseudodifferential operators, see e.g.\  \cite{Marschall}. By means of spectral invariance results it is possible to reduce the proof of the invertibility of $P$ to a few cases. \\

In the literature there are already several spectral invariance results of pseudodifferential operators with smooth symbols $a$ in the Hörmander class $\Sn{m}{\rho}{\delta}$, cf.\ e.g.\  \cite{AlvarezHounie}, \cite{Beals}, \cite{Grubb},   \cite{Kryakvin}, \cite{SchroheLeopold1992}, \cite{SchroheLeopold1993}, \cite{LeopoldTriebel}, \cite{Schrohe1990}, \cite{Schrohe1992} and \cite{Ueberberg}. 
The Hörmander class $\Sn{m}{\rho}{\delta}$ consists of all smooth functions $a: \RnRn \rightarrow \C$ such that for all $k \in \N_0$
\begin{align*}
  |a|^{(m)}_k := \max_{|\alpha|,|\beta|\leq k} \sup_{x, \xi \in \R^n}|\pax{\alpha}{\beta} a(x,\xi)|\<{\xi}^{-(m-\rho|\alpha|+\delta|\beta|)} < \infty.
\end{align*}
All spectral invariance results listed before show, that the spectrum of pseudodifferential operators $a(x,D_x)$ with a symbol  $a \in \Sn{m}{\rho}{\delta}$
is independent of the choice of certain spaces.
 For instance the spectrum of $a(x, D_x):H^{m+s}_2(\Rn) \rightarrow H^s_2(\Rn)$ is independent of the choice of $s \in  \R$. Here $H^s_2(\Rn)$ denotes a Bessel potential space, defined in (\ref{BesselPotentialSpace}), Section \ref{section:Preliminaries} below. The associated pseudodifferential operator to a symbol $a$ in the Hörmander class is defined by 
\begin{align}\label{Def1}
		\op (a)u(x):= a(x,D_x) u(x) := \intr e^{ix \cdot \xi} a(x,\xi) \hat{u}(\xi) \dq \xi \qquad \forall u \in \s , x \in \Rn,
\end{align}
where $\s$ is
the Schwartz space, i.\,e.\,, the space of all rapidly decreasing smooth functions and $\hat{u}$ denotes the Fourier transformation of $u$. 
	There are only a few spectral invariance results in the non-smooth case.  For non-smooth differential operators 
$P(x,D_x):= \sum_{|\alpha| \leq m} A_{\alpha}(x) \p_x^{\alpha}$
 such a result was shown by  Rabier in \cite{Rabier}
for $A_{\alpha} \in \left( \mathcal{D}_0(\R^n) \right)^{l \times l}$, $|\alpha| \leq m$, $l \in \N$. 
Here $ \mathcal{D}_0(\R^n)$ consists of all continuous functions $a \in C^0_b(\R^n)$ with vanishing oscillation at infinity
\begin{align*}
	\mathcal{D}_0(\R^n):=\{ a \in C^0_b(\Rn): \lim_{x \rightarrow \infty} mc_{\alpha}(x)=0\}, 
\end{align*}
where
\begin{align*}
	mc_{\alpha}(x):= \sup\{|a(x)-a(y)|: |x-y| \leq 1\} \qquad \text{ for all } x\in \Rn.
\end{align*}
However, in applications not only non-smooth differential operators appear. A first spectral invariance result  for non-smooth pseudodifferential operators was shown in \cite{Paper2}.

In this paper we improve the spectral invariance result for pseudodifferential operators with non-smooth symbols in the symbol-class $C^{\tilde{m},\tau} S^m_{\rho,\delta}(\Rn \times \Rn, \mathscr{L}(\C^l))$, $l \in \N$, $m \in \R$, $0 \leq  \rho, \delta \leq 1$ which was shown in \cite{Paper2}. Here $C^{\tilde{m},\tau}$ denotes the Hölder space of the differentiation order $\tilde{m} \in \N_0$ with Hölder regularity $0< \tau<1$. Moreover we identify the space $\mathscr{L}(\C^l)$ with $\C^{l \times l}$. The symbol-class $C^{\tilde{m}, \tau} S^m_{\rho,\delta}(\Rn \times \Rn, M;  \mathscr{L}(\C^l))$, $M \in \N_0 \cup \{\infty\}$ and $m \in \R$, consists of all functions $a:\RnRn \rightarrow \C^{l \times l}$ fulfilling the following properties:  For all $\alpha, \beta \in \N_0^n$ with $|\beta| \leq \tilde{m}$, $|\alpha| \leq M$ we have
  \begin{itemize}

		\item[i)] $\p_x^{\beta} a(x, .) \in C^{M}(\Rn; \mathscr{L}(\C^l))$ for all $x \in \Rn$,

		\item[ii)] $\p_x^{\beta} \pa{\alpha} a \in C^{0}(\R^n_x \times \R^n_{\xi};\mathscr{L}(\C^l))$,

		\item[iii)] $\|\pa{\alpha} a(x, \xi)\|_{\mathscr{L}(\C^l)} \leq C_{\alpha}\<{\xi}^{m-\rho|\alpha|}$ for all $x,\xi \in \Rn$,

		\item[iv)] $\| \pa{\alpha} a(.,\xi)  \|_{C^{\tilde{m}, \tau}(\R^n;\mathscr{L}(\C^l))} \leq C_{\alpha}\<{\xi}^{m -\rho|\alpha|+\delta(\tilde{m}+\tau)}$ for all $\xi \in \R^n$.
	\end{itemize}
 For a given symbol $a$ we define the associated pseudodifferential operator as in the smooth case, cf.\  (\ref{Def1}). Moreover, if $M=\infty$, we also denote  $C^{\tilde{m}, \tau} S^m_{\rho,\delta}(\Rn \times \Rn, M;  \mathscr{L}(\C^l))$ by  $C^{\tilde{m}, \tau} S^m_{\rho,\delta}(\Rn \times \Rn;  \mathscr{L}(\C^l))$.

In contrast to the proof of the spectral invariance result of \cite{Paper2} we do not use the characterization of non-smooth pseudodifferential operators via iterated commutators in order to verify the spectral invariance statement. Instead we use the main idea of \cite{Rabier}, where Rabier showed a spectral invariance result of non-smooth differential operators. 
Hence 
we use the fact, that the set of all invertible operators is open within the set of all linear and bounded operators. Consequently
we approximate the non-smooth pseudodifferential operators with smooth pseudodifferential operators first and use the next spectral invariance result of smooth pseudodifferential operators afterwards:

\begin{thm}\label{thm:spectralInvarianceGlatterFall}
  Let $l \in \N$ and $a \in S^m_{\rho, \delta}(\RnRn; \mathscr{L}(\C^l))$ with $m \in \R$, $0 \leq \delta \leq \rho \leq  1$, $\rho>0$, $\delta <1$. If $a(x, D_x): \left( H^{s+m}_p \right)^l \rightarrow \left( H^{s}_p \right)^l$ is invertible for some $s \in \R$ and  $1  \leq  p< \infty$, where $p=2$ in case $\rho \neq 1$, then  
  $a(x, D_x): \left( H^{r+m}_q \right)^l \rightarrow \left( H^{r}_q \right)^l$ is invertible for all $r \in \R$ and  $1 \leq q< \infty$, where $q=2$ in case $\rho \neq 1$.
\end{thm}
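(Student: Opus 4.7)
The strategy I would follow is the classical Beals--Ueberberg approach: prove that the inverse $b:=a(x,D_x)^{-1}\in\mathscr{L}((H^s_p)^l,(H^{s+m}_p)^l)$ is itself a pseudodifferential operator with symbol in $S^{-m}_{\rho,\delta}(\RnRn;\mathscr{L}(\C^l))$. Once this is established, invertibility on every admissible pair $(H^{r+m}_q)^l,(H^r_q)^l$ follows at once from the standard continuity of $\op(S^{-m}_{\rho,\delta})$ on Bessel potential spaces, combined with a density argument on Schwartz functions.

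The key ingredient is a commutator characterization of $S^{-m}_{\rho,\delta}$: a bounded operator $B:(H^s_p)^l\to(H^{s+m}_p)^l$ lies in $\op(S^{-m}_{\rho,\delta})$ if and only if, for all $\alpha,\beta\in\N_0^n$, the iterated commutator $\ad(-ix)^{\beta}\ad(D_x)^{\alpha}B$ extends to a bounded map $(H^s_p)^l\to(H^{s+m+\rho|\alpha|-\delta|\beta|}_p)^l$. For $\rho=1$ this is available on every $L^p$ (Ueberberg), while for $\rho<1$ one needs $p=2$ (Beals); this matches the hypothesis on $p$ in the theorem exactly. To apply the characterization to $b$ I would argue by induction on $|\alpha|+|\beta|$. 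The base case is the hypothesis on $b$. For the inductive step I would use the formula
\[
\ad(X)(b)=-b\,\ad(X)(a(x,D_x))\,b,\qquad X\in\{-ix_j,D_{x_j}\},
\]
which follows by applying $\ad(X)$ to $a(x,D_x)\,b=\id$, together with the Leibniz rule for $\ad$. Each factor $\ad(-ix)^{\beta'}\ad(D_x)^{\alpha'}a(x,D_x)$ that arises in the expansion belongs to $\op(S^{m-\rho|\alpha'|+\delta|\beta'|}_{\rho,\delta})$ by the symbol estimates on $a$ and hence acts boundedly between the expected Bessel potential spaces; combined with the inductive mapping properties of the lower-order commutators of $b$, this produces the required bound for $\ad(-ix)^{\beta}\ad(D_x)^{\alpha}b$. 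Invoking the commutator characterization in the reverse direction yields $b=\op(\tilde a)$ with $\tilde a\in S^{-m}_{\rho,\delta}(\RnRn;\mathscr{L}(\C^l))$.

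Now fix any $r\in\R$ and any $q$ admissible in the theorem. Continuity of the Hörmander calculus gives bounded maps $a(x,D_x):(H^{r+m}_q)^l\to(H^r_q)^l$ and $\op(\tilde a):(H^r_q)^l\to(H^{r+m}_q)^l$. On $\s^l$, which is dense in every Bessel potential space in sight, $\op(\tilde a)$ and $a(x,D_x)$ compose to the identity by the defining property of $b$ on the initial pair $(s,p)$, so by continuity the identities $\op(\tilde a)\,a(x,D_x)=\id$ on $(H^{r+m}_q)^l$ and $a(x,D_x)\,\op(\tilde a)=\id$ on $(H^r_q)^l$ extend to the full spaces, giving the claimed invertibility. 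The principal technical difficulty is the inductive commutator step: one must carefully track, at each stage of the Leibniz expansion, which Bessel potential space each intermediate operator maps between so that the composition closes in the mapping class required by the characterization. The index bookkeeping is delicate but by now standard in the $\op(S^m_{\rho,\delta})$-calculus.
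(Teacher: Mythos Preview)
Your approach is correct and is essentially the argument underlying the reference the paper invokes: the paper simply cites Schrohe's spectral invariance result (Corollary~1.9 in \cite{Schrohe1990}) for the scalar case $l=1$, notes that the endpoint $p,q=1$ follows by inspecting that proof, and then passes to $l>1$ via Cramer's rule and closure of $S^m_{\rho,\delta}$ under pointwise products. Schrohe's method, in turn, is exactly the Beals--Ueberberg commutator characterization you sketch, so conceptually there is no daylight between the two.

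Two remarks on the details. First, Ueberberg's $L^p$ characterization only covers $1<p<\infty$; your sentence ``for $\rho=1$ this is available on every $L^p$'' overreaches at the endpoint $p=1$, where $H^s_1:=F^s_{1,2}$ in this paper's convention. The paper does not give a self-contained argument there either, but it does flag the issue. Second, your treatment of $l>1$ via a matrix-valued commutator characterization is perfectly viable and arguably cleaner than the paper's Cramer's-rule reduction to $l=1$; both work, but yours avoids the algebraic detour at the cost of needing the characterization theorem in operator-valued form (which is routine).
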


\begin{proof}
  In the case $l =1$, $p,q \neq 1$ the result is a special case of \cite[Corollary 1.9]{Schrohe1990}.  Verifying the proof of  \cite[Corollary 1.9]{Schrohe1990}, we get the claim for general $p, q \in [1, \infty)$ in case $l=1$.
As Schrohe pointed out the result is also true for $l  \neq 1$. Using the case $l=1$ the general case easily follows from Cramers rule and the fact, that $ S^m_{\rho, \delta}(\RnRn)$ is closed with respect to pointwise multiplication. 
\end{proof}

On account of Theorem \ref{thm:spectralInvarianceGlatterFall} the invertibility of smooth pseudodifferential operators is independent of the choice of the Bessel potential space. 

Another important ingredient to show the spectral invariance result in the non-smooth case is the invariance of the Fredholm index of certain non-smooth pseudodifferential operators 
$P:H^{s+m}_p(\Rn) \rightarrow H^s_p(\Rn)$ with symbols in the class $ C^{\tilde{m},\tau} S^{m}_{\rho,\delta}(\RnRn;M)$. Since Fredholm operators are continuous by definition we restrict ourselves to all values of $s$ fulfilling the conditions of the continuity results of Marschall for non-smooth pseudodifferential operators, see \cite[Theorem 2.7 and Theorem 4.2]{Marschall}.  The next result states that the Fredholm index does not depend on the choice of $p \in [1,\infty)$ and of  all such $s$. The same holds true in the matrix-valued case.

\begin{thm}\label{thm:invarianceOfIndex}
  Let $l\in \N$,  $m \in \R$, $0\leq \delta <\rho \leq 1$, $ \tilde{m} \in \N$, $0<\tau <1$ with $\tilde{m}+\tau>\frac{1-\rho}{1-\delta}\cdot \frac{n}{2}$ if $\rho \neq 1$ and $M \in \N_0 \cup \{ \infty \}$ with $M \geq 2(n+1)$. Moreover let $a=(a_{i,j})_{i,j=1}^l \in C^{\tilde{m},\tau}\tilde{S}^{m}_{\rho,\delta}(\RnRn;M; \mathscr{L}(\C^l))$ be a symbol fulfilling the following properties for some $R>0$ and $C_0>0$:
  \begin{itemize}
	\item[0)] $a(\infty, \xi):= \lim_{|x| \rightarrow \infty }a(x,\xi)$ exists for all $\xi \in \Rn$.
    \item[1)] $|\det(a(x,\xi))|\<{\xi}^{-ml}\geq C_0$ for all $x,\xi \in \Rn$ with $|x|+|\xi|\geq R$.
	\item[2)] $\|\pa{\alpha} a(x, \xi) -  \pa{\alpha}a(\infty, \xi)\|_{\mathscr{L}(\C^l)}\<{\xi}^{-m+\rho|\alpha|} \xrightarrow{|x| \rightarrow \infty} 0$ uniformly in $\xi \in \Rn$ for all $\alpha \in \Non$ with $|\alpha| \leq n+2$.
  \end{itemize}
  Then for all $p \in [1,\infty)$ with $p=2$ if $\rho \neq 1$ and $s \in \R$ with $(1-\rho)\frac{n}{p}-(1-\delta)(\tilde{m}+\tau)<s<\tilde{m}+\tau$ 
  \begin{align*}
    A_p^s:=a(x, D_x): \left( H^{m+s}_p(\Rn) \right)^l \rightarrow \left( H^s_p(\Rn) \right)^l \qquad \text{is a Fredholm operator}
  \end{align*}
  and $\ind(A^s_p) = \ind(A^r_q)$ for all $q \in [1, \infty)$  with $q=2$ if $\rho \neq1$ and $(1-\rho)\frac{n}{p}-(1-\delta)(\tilde{m}+\tau)<r<\tilde{m}+\tau$.
\end{thm}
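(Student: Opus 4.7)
The plan is to follow the strategy sketched in the introduction: approximate the non-smooth operator by smooth pseudodifferential operators in operator norm, use that the Fredholm index is locally constant, and reduce to the smooth case where Theorem~\ref{thm:spectralInvarianceGlatterFall} combined with elliptic regularity delivers the invariance. First I establish Fredholmness of $A^s_p$. Conditions 0)--2) encode a form of global ellipticity adapted to the class $C^{\tilde{m},\tau}\tilde{S}^m_{\rho,\delta}$: ellipticity for $|x|+|\xi|\geq R$, existence of a symbol at infinity in $x$, and vanishing oscillation of the $\xi$-derivatives $\pa{\alpha}a$ (for $|\alpha|\leq n+2$) as $|x|\to\infty$. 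Using the non-smooth symbolic calculus developed in \cite{Paper2}, together with the $\xi$-regularity bound $M \geq 2(n+1)$, I construct a parametrix $B$ within $C^{\tilde{m},\tau}\tilde{S}^{-m}_{\rho,\delta}$ such that $BA - I$ and $AB - I$ are compact on $(H^s_p)^l$ (via a cutoff argument combined with Rellich's theorem near the origin and the ellipticity at infinity to handle the vanishing-oscillation remainder). The admissible range of $s$ is exactly the one for which Marschall's continuity estimates \cite[Thm.~2.7 and Thm.~4.2]{Marschall} apply, so that $A^s_p$ and $B^s_p$ are both bounded.

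Next I mollify the symbol in $x$: with a standard mollifier $\varphi_\epsilon \in C_c^\infty(\R^n)$ set $a_\epsilon(x,\xi) := (\varphi_\epsilon \ast_x a(\cdot,\xi))(x)$. Then $a_\epsilon \in S^m_{\rho,\delta}(\RnRn;\mathscr{L}(\C^l))$ is smooth, the conditions 0)--2) are inherited uniformly for small $\epsilon$, and Marschall's estimates combined with standard mollifier bounds yield
\[
  \|a(x,D_x) - a_\epsilon(x,D_x)\|_{\mathscr{L}((H^{s+m}_p)^l,(H^s_p)^l)} \xrightarrow{\epsilon \to 0} 0,
\]
and likewise on $(H^{r+m}_q)^l \to (H^r_q)^l$. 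Since Fredholm operators form an open set and the index is locally constant, choosing $\epsilon$ small enough to handle both pairs simultaneously gives $\ind(A^s_p) = \ind((a_\epsilon)^s_p)$ and $\ind(A^r_q) = \ind((a_\epsilon)^r_q)$, reducing the claim to $\ind((a_\epsilon)^s_p) = \ind((a_\epsilon)^r_q)$ for the smooth globally elliptic symbol $a_\epsilon$. For this I use elliptic regularity in the smooth calculus to show $\ker((a_\epsilon)^s_p) \subset (\s)^l$ independently of $(s,p)$, together with the same argument applied to the formal adjoint for the cokernel. Theorem~\ref{thm:spectralInvarianceGlatterFall} enters at this step to certify that the parametrix remainder is truly smoothing across the whole $L^p$-scale, via the invertibility of an auxiliary perturbation such as $a_\epsilon + c\<{D_x}^m I_l$ (invertible on $(H^{s+m}_2)^l$ for large $c$ by a Neumann-series argument, hence on every Bessel-potential space by Theorem~\ref{thm:spectralInvarianceGlatterFall}).

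The principal obstacle I anticipate is the norm convergence in the mollification step: the mollification must take place in a symbol topology strong enough for Marschall's continuity theorems to yield convergence in $\mathscr{L}((H^{s+m}_p)^l,(H^s_p)^l)$ for the particular admissible $(s,p)$ at hand. The lower bound $(1-\rho)\frac{n}{p} - (1-\delta)(\tilde{m}+\tau) < s$ in the hypothesis is tailored exactly for this purpose, and carefully tracking how the continuity constants degrade under the slight $C^{\tilde{m},\tau} \to C^{\tilde{m}',\tau'}$ loss of Hölder regularity incurred by mollification is the technical heart of the argument. A secondary but essential point is the uniform preservation of the ellipticity conditions 0)--2) under mollification, which is what guarantees that $(a_\epsilon)^s_p$ is Fredholm for all sufficiently small $\epsilon$ and hence that the local-constancy argument for the index applies.
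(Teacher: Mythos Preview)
Your approach differs substantially from the paper's. The paper does \emph{not} mollify to reduce to the smooth case for this theorem; that technique is reserved for the spectral invariance result, Theorem~\ref{thm:SpectralInvarianceNonSmoothCase}. For the index theorem the paper instead proves a direct regularity lemma (Lemma~\ref{lemma:Lemma3.3}): if $u\in (H^{m+s}_p)^l$ and $a(x,D_x)u\in (C_c^\infty)^l$, then $u\in (H^{m+r}_q)^l$ for every admissible $(r,q)$. This is established for the non-smooth operator itself via a symbol-smoothing parametrix and a bootstrap in $s$ and in $q$. With this lemma in hand, $\ker(A^s_p)=\ker(A^r_q)$ follows by taking $\Phi=0$, and for the cokernel one picks a complement $\Phi_1,\ldots,\Phi_k\in (C_c^\infty)^l$ to $\Im(A^s_p)$, then uses Lemma~\ref{lemma:Lemma3.3} and Rabier's codimension lemma (Lemma~\ref{lemma:LemmaEstimateOfTheCodimension}) to show $\codim\Im(A^r_q)\le k$, and symmetrizes.

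Your route via mollification and local constancy of the index is plausible in outline, but the smooth step has a genuine gap. You assert $\ker((a_\epsilon)^s_p)\subset(\s)^l$ by ``elliptic regularity in the smooth calculus,'' but on $\R^n$ local ellipticity gives only local smoothness, not decay; getting $u$ into every $(H^{m+r}_q)^l$ requires using the structure of the symbol at spatial infinity, which is precisely the content of a lemma like Lemma~\ref{lemma:Lemma3.3}. Your attempt to extract this from Theorem~\ref{thm:spectralInvarianceGlatterFall} via an invertible perturbation $a_\epsilon + c\langle D_x\rangle^m I_l$ does not work: if $a_\epsilon u=0$ then $(a_\epsilon + c\langle D_x\rangle^m)u = c\langle D_x\rangle^m u \in (H^s_p)^l$, and applying the inverse only returns $u\in (H^{m+s}_p)^l$, which is where you started---there is no gain in regularity or integrability. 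To close your argument you would need either a smooth index-invariance theorem of Schrohe's type (e.g.\ \cite{Schrohe1992}, after verifying that $a_\epsilon$ lies in the slowly varying class treated there), or a regularity lemma of the same shape as Lemma~\ref{lemma:Lemma3.3} for $a_\epsilon$; in the latter case the reduction to the smooth symbol is redundant, since the same lemma applies directly to $a$.
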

Here $ C^{\tilde{m},\tau}\tilde{S}^{m}_{\rho,\delta}(\RnRn;M; \mathscr{L}(\C^l)) \subseteq  C^{\tilde{m},\tau} S^{m}_{\rho,\delta}(\RnRn;M; \mathscr{L}(\C^l))$ consists of all so called \textit{slowly varying symbols}. For the definition of this symbol class we refer to Definition \ref{Def:SlowlyVaryingSymbols}. 
We prove this result in Section \ref{InvarianceFredholmIndex}. 

We would like to underline that in the previous theorem $\tilde{m}$ is not allowed to be $0$. 
The reason is simply that  we use a statement about  sufficient conditions for non-smooth pseudodifferential operators, see Theorem \ref{thm:Fredholmproperty},  in order to prove Theorem \ref{thm:invarianceOfIndex}. Theorem \ref{thm:Fredholmproperty} is a generalization of \cite[Theorem 1.1]{Paper3}.
Moreover we want to mention, that the condition 
\begin{align}\label{bedingung}
	(1-\rho)\frac{n}{p}-(1-\delta)(\tilde{m}+\tau)<s,r<\tilde{m}+\tau
\end{align}
of the previous lemma is independent of $p$ due to the following reason: 
\begin{itemize}
	\item  if $\rho=1$, then \eqref{bedingung} reduces to $-(1-\delta)(\tilde{m}+\tau)<s,r<\tilde{m}+\tau$
	\item if $\rho \neq 1$, then $p=2$ is assumed, therefore the condition \eqref{bedingung} becomes $(1-\rho)\frac{n}{2}-(1-\delta)(\tilde{m}+\tau)<s,r<\tilde{m}+\tau$ and is therefore again independent of $p$.
\end{itemize}
On account of shortness, in the future we always do not split up those two cases and we just write \eqref{bedingung} for it.

In the literature only few results concerning the invariance of the Fredholm index of pseudodifferential operators on $\Rn$ can be found. According to Schrohe, cf. \cite[Theorem 1.1]{Schrohe1992},  the Fredholm index of pseudodifferential operators $P:H^{st}_{\gamma} \rightarrow H^{st}_{\gamma}$ with so called slowly varying smooth symbols of certain Hörmander classes and order 0 is independent of $s,t$ and the admissible weight $\gamma$. For the definition of the weighted Sobolev space $H^{st}_{\gamma}$ we refer to \cite{Schrohe1992}. In the non-smooth case  there are just statements about the invariance of the Fredholm index for differential operators, cf. \cite{Rabier}, \cite{Rabier2008} so far. For general non-smooth pseudodifferential operators with coefficients in the Hölder space Theorem \ref{thm:invarianceOfIndex} is a first result about the invariance of the Fredholm index. 

Theorem \ref{thm:invarianceOfIndex} with $\tilde{m}=1$, $\rho=1$ and $\delta =0$ includes the  result about the invariance of the Fredholm index for non-smooth differential operators $P(x,D_x):= \sum_{|\alpha| \leq m} A_{\alpha}(x)\p_x^{\alpha}$, $A_{\alpha} \in \left( \mathcal{D}_0(\Rn)\right)^{l \times l}$, $l \in \N$, of Rabier in \cite[Theorem 3.5]{Rabier} if $A_{\alpha}  \in \left( C^{1,\tau}(\Rn) \right)^{l\times l}$ and if $\lim_{|x| \rightarrow \infty} a(x,\xi)$ exists. Since
\begin{itemize}
	\item Property $0)$ is fulfilled on account of the  assumption, that  $\lim_{|x| \rightarrow \infty} a(x,\xi)$ exists,
	\item Property $1)$ is a direct consequence of $P(x,D_x):  (H^{m}_p(\Rn))^l \rightarrow (L^p(\Rn)^l)$ being a Fredholm operator due to \cite[Theorem 2.1]{Rabier},
	\item Property  $2)$ holds due to the definition of  $\mathcal{D}_0(\Rn)$,
\end{itemize}
all assumptions of Theorem \ref{thm:invarianceOfIndex} hold for $P(x,D_x)$. 
Hence the Fredholmness of $P(x,D_x): (H^{m}_p(\Rn))^l \rightarrow (L^p(\Rn)^l)$ for some $p \in (1,\infty)$ implies the Fredholmness of $P(x,D_x): (H^{m}_q(\Rn))^l \rightarrow (L^q(\Rn)^l)$ for all $q \in (1,\infty)$  and the invariance of the Fredholm index with respect to $q \in (1,\infty)$  due to  Theorem \ref{thm:invarianceOfIndex}. This is exactly the claim of \cite[Theorem 3.5]{Rabier} in this case.

\makebox{} \newline

In the present paper we proceed as follows: For convenience of the reader, we give a short summary of all notations and function spaces needed later on in Section \ref{section:Preliminaries}. In Section \ref{section:PDO} a new subclass  $C_{unif}^{\tilde{m}, \tau} S^{m }_{\rho, \delta}(\Rn \times \Rn; \mathscr{L}(\C^l))$ of non-smooth symbols, belonging to the symbol-class $C^{\tilde{m}, \tau} S^m_{\rho,\delta}(\Rn \times \Rn; \mathscr{L}(\C^l))$, is introduced. Moreover we focus on proving an important ingredient for the main result of this paper: For every symbol $a \in C_{unif}^{\tilde{m}, \tau} S^{m }_{\rho, \delta}(\Rn \times \Rn; \mathscr{L}(\C^l))$ we show the existence of a sequence of smooth symbols which converge to $a$. In case $\delta =0$, the same statement even holds for every symbol of the symbol-class $C^{\tilde{m}, \tau} S^{m }_{\rho, 0}(\Rn \times \Rn; \mathscr{L}(\C^l))$. These results enable us to improve the spectral invariance result for non-smooth pseudodifferential operators in certain cases:

\begin{thm}\label{thm:SpectralInvarianceNonSmoothCase}
   Let $m \in \R$, $0 \leq \delta < \rho \leq 1$,  $0<\tau<1$ and  $\tilde{m} \in \N$  with $\tilde{m}+\tau>\frac{1-\rho}{1-\delta}\cdot \frac{n}{2}$ if $\rho \neq 1$. Moreover let  $a \in C^{\tilde{m}, \tau} \tilde{S}^{m}_{\rho, \delta}(\RnRn; \mathscr{L}(\C^l))$ for $ \delta=0$ and $a \in C_{unif}^{\tilde{m}, \tau} S^{m}_{\rho, \delta}(\RnRn; \mathscr{L}(\C^l) )\cap  C^{\tilde{m}, \tau} \tilde{S}^{m}_{\rho, \delta}(\RnRn; \mathscr{L}(\C^l))$ else be a symbol fulfilling properties 
0),
1) and 2) of Theorem \ref{thm:invarianceOfIndex}  for some $R>0$ and $C_0>0$.
Additionally we assume that  $A^s_p:=a(x,D_x):\left( H^{m+s}_p(\Rn) \right)^l \rightarrow \left( H^{m}_p(\Rn) \right)^l $ is invertible for some $p \in [1,\infty)$ and some $s \in \R$ with $(1-\rho)\frac{n}{p}-(1-\delta) (\tilde{m}+\tau) < s < \tilde{m}+\tau$. In case $\rho \neq 1$ we assume $p=2$.
   Then $A^r_q:=a(x,D_x):\left( H^{m+r}_q(\Rn) \right)^l \rightarrow \left( H^{m}_q(\Rn) \right)^l $ is invertible for every $q \in [1,\infty)$ and $r \in \R$ with $(1-\rho)\frac{n}{p}-(1-\delta)(\tilde{m}+\tau) < r< \tilde{m}+\tau$, where $q =2$ if $\rho \neq 1$.
\end{thm}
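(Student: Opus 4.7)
The plan is to combine the invariance of the Fredholm index (Theorem \ref{thm:invarianceOfIndex}) with the smooth spectral invariance result (Theorem \ref{thm:spectralInvarianceGlatterFall}), using the smooth-symbol approximations from Section \ref{section:PDO} to bridge the non-smooth and smooth settings, in the spirit of Rabier's argument for differential operators.

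First I would invoke Theorem \ref{thm:invarianceOfIndex}: the symbol $a$ satisfies all its hypotheses, so $A^s_p$ and $A^r_q$ are both Fredholm with $\ind(A^r_q)=\ind(A^s_p)$. Since $A^s_p$ is invertible by assumption, $\ind(A^s_p)=0$, and hence $\ind(A^r_q)=0$. Consequently, in order to establish invertibility of $A^r_q$ it suffices to show $\ker A^r_q=\{0\}$.

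Next, using the approximation result from Section \ref{section:PDO}, I choose a sequence $(a_\varepsilon)_{\varepsilon>0}\subset S^m_{\rho,\delta}(\RnRn;\mathscr{L}(\C^l))$ of smooth symbols such that the associated operators $A_\varepsilon:=a_\varepsilon(x,D_x)$ converge to $a(x,D_x)$ in the operator norm of $\mathcal{L}((H^{t+m}_u(\Rn))^l,(H^t_u(\Rn))^l)$ for every admissible pair $(t,u)$. Openness of the invertible elements in $\mathcal{L}((H^{s+m}_p)^l,(H^s_p)^l)$ then forces $A_\varepsilon$ to be invertible on this space for all sufficiently small $\varepsilon$, with $A_\varepsilon^{-1}\to(A^s_p)^{-1}$ in operator norm. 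Applying Theorem \ref{thm:spectralInvarianceGlatterFall} to the smooth symbol $a_\varepsilon$ now yields invertibility of $A_\varepsilon:(H^{r+m}_q)^l\to(H^r_q)^l$ as well. Finally, for any $u\in\ker A^r_q$, writing $A_\varepsilon u=(A_\varepsilon-A^r_q)u$ and inverting $A_\varepsilon$ gives
\[
\|u\|_{(H^{r+m}_q)^l}\ \le\ \|A_\varepsilon^{-1}\|_{\mathcal{L}((H^r_q)^l,(H^{r+m}_q)^l)}\ \|A_\varepsilon-A^r_q\|_{\mathcal{L}((H^{r+m}_q)^l,(H^r_q)^l)}\ \|u\|_{(H^{r+m}_q)^l},
\]
so that $u=0$ for all small $\varepsilon$ once the first factor is controlled: this closes the argument.

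The main obstacle is precisely this uniform bound $\|A_\varepsilon^{-1}\|_{\mathcal{L}((H^r_q)^l,(H^{r+m}_q)^l)}\le C$ as $\varepsilon\to 0$: openness of the invertibles delivers it only on the \emph{original} pair $(s,p)$ where invertibility is assumed, not on the \emph{target} pair $(r,q)$. To extract it, I would exploit that by Theorem \ref{thm:spectralInvarianceGlatterFall} the inverse $A_\varepsilon^{-1}$ is itself realized by a smooth pseudodifferential operator, obtained by a parametrix construction whose symbol seminorms can be controlled, uniformly in $\varepsilon$, by the uniform symbol bounds on $a_\varepsilon$ coming from the approximation scheme together with the uniform ellipticity at infinity inherited from property 1) of $a$. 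The continuity estimates for smooth pseudodifferential operators between Bessel potential spaces then upgrade these uniform symbol bounds to the required uniform operator-norm bound on $(H^r_q)^l\to(H^{r+m}_q)^l$.
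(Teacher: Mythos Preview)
Your reduction to injectivity via Theorem~\ref{thm:invarianceOfIndex} is correct, and the approximation $a_\varepsilon\to a$ in operator norm on every admissible scale follows from Lemma~\ref{DichtheitGlatteSymbolklasseInNonSmoothOne} (resp.\ Lemma~\ref{lemma:KonvergenzFürDeltaUngleich0}) together with Theorem~\ref{thm:BoundednessResultNonSmooth}. The gap is precisely where you flagged it: the uniform bound on $\|A_\varepsilon^{-1}\|$ on the target scale $(r,q)$. Your proposed fix does not close it. Theorem~\ref{thm:spectralInvarianceGlatterFall} asserts only invertibility, not that the inverse lies in $\op S^{-m}_{\rho,\delta}$ with controlled seminorms; and a parametrix based on the uniform ellipticity in property~1) produces $A_\varepsilon\, b_\varepsilon(x,D_x)=I+R_\varepsilon$ with $R_\varepsilon$ of uniformly negative order but \emph{not} of uniformly small operator norm on $(H^r_q)^l$, so no Neumann series is available there. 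What you would actually need is a quantitative $\Psi^*$-algebra statement (symbol seminorms of $A_\varepsilon^{-1}$ controlled by those of $a_\varepsilon$ together with $\|A_\varepsilon^{-1}\|$ on the \emph{fixed} $(s,p)$-scale), which is not provided in the paper and which your sketch does not supply.

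The paper sidesteps this issue entirely. It argues by contradiction: if $A^r_{q_0}$ were Fredholm of index~$0$ but not invertible, Rabier's Lemma~\ref{lemma:Rabier} guarantees that in every operator-norm neighbourhood of $A^r_{q_0}$ one finds operators of the form $S+zH$ (with $H$ a fixed small perturbation making $A^r_{q_0}+H$ invertible and $|z|$ small) that are not invertible on the $(r,q_0)$-scale. One then chooses both $S=\tilde b(x,D_x)$ and $H=\tilde a(x,D_x)$ to be smooth PDOs whose symbols are close to $a$ in the non-smooth symbol seminorms; Lemma~\ref{lemma:GlatteInvertiblePDOinUmgebung} (which packages Theorem~\ref{thm:spectralInvarianceGlatterFall} with the seminorm estimate of Theorem~\ref{thm:BoundednessResultNonSmooth}) then forces $\tilde b(x,D_x)+z\tilde a(x,D_x)$ to be invertible on \emph{all} scales, in particular on $(r,q_0)$, yielding the contradiction. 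The device of Lemma~\ref{lemma:Rabier} is exactly what allows the argument to run on purely qualitative smooth spectral invariance, with no uniform control of inverses required.
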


Hence, according to the previous theorem,  the invertibility of certain non-smooth pseudodifferential operators $P: H^{m+s}_p(\Rn)  \rightarrow H^{m}_p(\Rn)$ is not dependent of $p \in [1,\infty)$ and for all $s$, which fulfill the conditions of the continuity results for non-smooth pseudodifferential operators, c.f. \cite[Theorem 2.7 and Theorem 4.2]{Marschall}.  The same holds true in the operator-valued case. 
This statement is proved in Section \ref{section:spectralInvariance} 
while Section \ref{InvarianceFredholmIndex} deals to show the invariance of the Fredholm index for non-smooth pseudodifferential operators in certain cases, cf. Theorem \ref{thm:invarianceOfIndex}.
We want to emphasize, that we have to assume $\tilde{m}\neq 0$ in Theorem \ref{thm:SpectralInvarianceNonSmoothCase}, since we use Theorem \ref{thm:invarianceOfIndex} in the proof. 

In comparison to the spectral invariance results of \cite{Paper2} we do not need to restrict ourselves to the cases $\rho \in \{0,1\}$ and $\delta=0$ here. Additionally in case $(\rho, \delta)=(1,0)$ we have to assume much less smoothness with respect to the first variable of the symbol than in \cite{Paper2}: In \cite[Theorem 5.10]{Paper2} $\tilde{m}$ at least hast to be bigger than $\frac{3}{2} n +2$, while in Theorem \ref{thm:SpectralInvarianceNonSmoothCase} $\tilde{m} \in \N$ is arbitrary. 

However the spectral invariance results of \cite[Theorem 5.1 and Theorem 5.10]{Paper2} are not completely covered in the cases $\rho \in \{0,1\}$ and $\delta=0$ of Theorem \ref{thm:SpectralInvarianceNonSmoothCase}.  In Theorem \ref{thm:SpectralInvarianceNonSmoothCase} just symbols which are smooth with respect to the spatial variable $\xi$ are considered, while in  \cite[Theorem 5.1 and Theorem 5.10]{Paper2} the symbols are allowed to be non-smooth with respect to the spatial variable. Moreover in  Theorem \ref{thm:SpectralInvarianceNonSmoothCase} the claim holds not for all symbols of the classes $C^{\tilde{m}, \tau} S^m_{\rho, 0} (\RnRn ; \mathscr{L}(\mathbb{C}^l))$ for $\rho \in \{0,1\}$ and suitable variables $\tilde{m}, \tau$ and $m$ as in \cite[Theorem 5.1 and Theorem 5.10]{Paper2}, but just for symbols of a special subclass of that symbol class, namely for the so called slowly varying symbols, cf. Definition \ref{Def:SlowlyVaryingSymbols}. 

We now assume that a symbol $a$ fulfills all assumptions of Theorem 5.1 respectively Theorem 5.10 of \cite{Paper2} and of Theorem \ref{thm:SpectralInvarianceNonSmoothCase} with $\delta=0$ and $\rho \in \{0,1\}$. 
Then it becomes apparent that  Theorem \ref{thm:SpectralInvarianceNonSmoothCase} really is an improvement of  Theorem 5.1 respectively Theorem 5.10 of \cite{Paper2}. On account of  Theorem \ref{thm:SpectralInvarianceNonSmoothCase} we get the invertibility of $A^r_q$ for every $(1-\rho)\frac{n}{p}-(\tilde{m}+\tau) < r< \tilde{m}+\tau$, while the interval for suitable values for $r$ is smaller if Theorem 5.1 respectively Theorem 5.10 of \cite{Paper2} is applied. 
The difference of the interval length for suitable  values for the parameter $r$ is not less than $3n+2$ if $\rho=1$ and not less than $n$ if $\rho=0$. In addition while using  Theorem 5.1 respectively Theorem 5.10 of \cite{Paper2} we obtain the invertibility of $A^r_q$ is not for each choice of the symbol $a$ for all $q \in [1,\infty)$ as with Theorem \ref{thm:SpectralInvarianceNonSmoothCase}.\\

Theorem \ref{thm:SpectralInvarianceNonSmoothCase} includes the spectral invariance  result for non-smooth differential operators $P(x,D_x):= \sum_{|\alpha| \leq m} A_{\alpha}(x)\p_x^{\alpha}$, $A_{\alpha} \in \left( \mathcal{D}_0(\Rn)\right)^{l \times l}$, $l \in \N$, of Rabier in \cite[Theorem 4.4]{Rabier} in certain cases, but not completely. 
The invertibility of $P(x,D_x): (H^{m}_p(\Rn))^l \rightarrow (L^p(\Rn)^l)$ for some $p \in (1,\infty)$ implies the invertibility of $P(x,D_x): (H^{m}_q(\Rn))^l \rightarrow (L^q(\Rn)^l)$ for all $q \in (1,\infty)$  due to \cite[Theorem 4.4]{Rabier}. We additionally assume a bit more smoothness of the coefficients of the differential operator $P(x,D_x)$, namely $A_{\alpha} \in \left( C^{1,\tau}(\Rn) \right)^{l \times l}$, where $0<\tau <1$ and have to assume that  $\lim_{|x| \rightarrow \infty} a(x,\xi)$ exists. Then Theorem \ref{thm:SpectralInvarianceNonSmoothCase} with $\tilde{m}=1$, $\rho=1$ and $\delta =0$ already covers the result of \cite[Theorem 4.4]{Rabier}, since
\begin{itemize}
	\item $a \in C^{\tilde{m}, \tau} \tilde{S}^{m}_{\rho, \delta}(\RnRn; \mathscr{L}(\C^l))$ for $ \delta=0$ and $a \in C_{unif}^{\tilde{m}, \tau} S^{m}_{\rho, \delta}(\RnRn; \mathscr{L}(\C^l) )\cap  C^{\tilde{m}, \tau} \tilde{S}^{m}_{\rho, \delta}(\RnRn; \mathscr{L}(\C^l))$ else follows from the choice of $a$ and the definition of  $\mathcal{D}_0(\Rn)$,
	\item Property $0)$ is correct because of the assumption, that  $\lim_{|x| \rightarrow \infty} a(x,\xi)$ exists,
	\item Property $1)$ is a direct consequence of $P(x,D_x):  (H^{m}_p(\Rn))^l \rightarrow (L^p(\Rn)^l)$ being a Fredholm operator due to \cite[Theorem 2.1]{Rabier},
	\item Property $2)$ holds due to the definition of  $\mathcal{D}_0(\Rn)$.
\end{itemize}


\section{Notations and Function Spaces}\label{section:Preliminaries}

Throughout this paper we denote the set of all natural numbers without $0$ by $\N$. Additionally, we consider $n,l \in \N$ in this paper unless otherwise noted. The notations
\begin{itemize}
	\item $\lfloor x \rfloor:=\max\{m \in \Z: m \leq x\} \quad \text{ for each } x \in \R,$
	\item $\lceil x \rceil:=\min\{m \in \Z: m \geq x\} \quad \text{ for each } x \in \R,$
	\item $\<{x}:=(1+|x|^2)^{1/2} \quad \text{ for each } x \in \Rn$, 
	\item $\<{x;y}:=(1+|x|^2+|y|^2)^{1/2} \quad \text{ for each } x,y \in \Rn$, 
	\item $ \dq \xi:= (2 \pi)^{-n} d \xi$
\end{itemize}
are often used in this paper. 
For each multi-index $\alpha=(\alpha_1, \ldots, \alpha_n) \in \Non$ we define $ \p^{\alpha}_x := \p^{\alpha_1}_{x_1} \ldots \p^{\alpha_n}_{x_n} $. The linear hull of some functions $\Phi_1, \ldots,\Phi_k$ is denoted by $\Span\{\Phi_1, \ldots,\Phi_k\}$. 

For two Banach spaces $X,Y$ we denote the set of all linear and bounded operators $A:X \rightarrow Y$ by $\mathscr{L}(X,Y)$. We also write $\mathscr{L}(X)$ instead of $\mathscr{L}(X,X)$. Additionally we write $X'$ for the dual space of a Banach space $X$. Moreover the kernel respectively the image of an operator $A:X \rightarrow Y$ is denoted by $\ker(A)$ respectively $\Im(A)$.  

The set of all $k$-times differentiable and bounded functions is designated as $C^k_b(\Rn)$.

The \textit{Hölder space} $C^{0,\tau}(\Rn;\mathscr{L}(\C^l))$ of the differentiation order $0$ with Hölder continuity exponent $\tau \in (0,1]$ is the set of all matrix-valued functions $f: \Rn \rightarrow \C^{l \times l}$, $l \in \N$ fulfilling
\begin{align*}
  \| f\|_{C^{0,\tau}(\Rn;\mathscr{L}(\C^l))} :=\sup_{x \in \Rn} \|f(x)\|_{\mathscr{L}(\C^l)}+ \sup_{x \neq y} \frac{\|f(x)-f(y)\|_{\mathscr{L}(\C^l)}}{|x-y|^{\tau}}< \infty.
\end{align*}
A function $f: \Rn \rightarrow \C^{l \times l}$  is an element of the Hölder space $C^{\tilde{m},\tau}(\Rn;\mathscr{L}(\C^l))$ of the differentiation order $\tilde{m} \in \N_0$ if it is $\tilde{m}-$times differentiable and if $\p^{\alpha}_x f \in C^{0,\tau}(\Rn;\mathscr{L}(\C^l))$  for all $\alpha \in \Non$ with $|\alpha| \leq \tilde{m}$. Note that all Hölder spaces are Banach spaces. 
Using the definition of Hölder spaces one easily gets

\begin{lemma}\label{lemma:AbschatzungHolderraume}
  Let $0<t<\tau<1$ and $\tilde{m} \in \N_0$. Then 
  \begin{align*}
    \|f(x-y)-f(x)\|_{C^{\tilde{m},t} (\Rn_x)} \leq C |y|^{\tau-t} \|f\|_{C^{\tilde{m},\tau}} \qquad \text{ for all } f \in C^{\tilde{m},\tau}(\Rn), y \in \Rn. 
  \end{align*}
\end{lemma}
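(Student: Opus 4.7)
The plan is to reduce the estimate to two elementary bounds on translates of a single Hölder function and then combine them by a short interpolation argument, applied derivative by derivative.

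First I would observe that translation commutes with differentiation, so for every multi-index $\alpha\in\Non$ with $|\alpha|\leq\tilde m$ one has $\p_x^{\alpha}\bigl(f(x-y)-f(x)\bigr) = g(x-y)-g(x)$ where $g:=\p^{\alpha}f\in C^{0,\tau}(\Rn)$ satisfies $\|g\|_{C^{0,\tau}}\leq\|f\|_{C^{\tilde m,\tau}}$. Consequently it is enough to prove the one-derivative statement
\[
  \|g(\cdot-y)-g(\cdot)\|_{C^{0,t}(\Rn)}\leq C\,|y|^{\tau-t}\,\|g\|_{C^{0,\tau}}\qquad\forall\, g\in C^{0,\tau}(\Rn),
\]
and sum over $|\alpha|\leq\tilde m$.

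For the $L^\infty$-part of the $C^{0,t}$-norm I would distinguish two cases. If $|y|\leq 1$, the $\tau$-Hölder bound gives $|g(x-y)-g(x)|\leq\|g\|_{C^{0,\tau}}|y|^{\tau}$, and since $|y|^{t}\leq 1$ this is at most $\|g\|_{C^{0,\tau}}|y|^{\tau-t}$. If $|y|>1$, the trivial estimate $|g(x-y)-g(x)|\leq 2\|g\|_{\infty}\leq 2\|g\|_{C^{0,\tau}}$ suffices because $|y|^{\tau-t}\geq 1$.

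The main point is the $t$-Hölder seminorm. For $x\neq x'$ in $\Rn$ set
\[
  \Delta := \bigl(g(x-y)-g(x)\bigr)-\bigl(g(x'-y)-g(x')\bigr)
         =  \bigl(g(x-y)-g(x'-y)\bigr)-\bigl(g(x)-g(x')\bigr).
\]
Applying the $\tau$-Hölder estimate to the two brackets in the second decomposition yields $|\Delta|\leq 2\|g\|_{C^{0,\tau}}|x-x'|^{\tau}$, while applying it to the two brackets in the first decomposition yields $|\Delta|\leq 2\|g\|_{C^{0,\tau}}|y|^{\tau}$. Hence $|\Delta|\leq 2\|g\|_{C^{0,\tau}}\min\bigl(|x-x'|^{\tau},|y|^{\tau}\bigr)$. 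Dividing by $|x-x'|^{t}$ and splitting according to whether $|x-x'|\leq|y|$ or $|x-x'|>|y|$, the minimum yields $|y|^{\tau-t}$ in both situations because $\tau-t>0$. Combined with the $L^\infty$-estimate above this gives the claim for one $g$, and summing over $|\alpha|\leq\tilde m$ completes the proof.

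I do not expect any real obstacle; this is a standard interpolation argument. The only step worth flagging is the double use of the triangle inequality to obtain two simultaneous bounds on $\Delta$, since it is exactly the choice between the two decompositions that produces the minimum and hence the factor $|y|^{\tau-t}$ after the case distinction.
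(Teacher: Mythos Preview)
Your proof is correct and follows essentially the same route as the paper: both reduce to a single derivative $g=\partial^{\alpha}f$, obtain the two bounds $|\Delta|\leq 2\|g\|_{C^{0,\tau}}|x-x'|^{\tau}$ and $|\Delta|\leq 2\|g\|_{C^{0,\tau}}|y|^{\tau}$ via the two groupings of the four-term difference, and then split cases according to whether $|x-x'|\leq|y|$ or not. The only cosmetic difference is in the $L^\infty$ part, where the paper invokes the embedding $C^{\tilde m,\tau}\hookrightarrow C^{\tilde m,\tau-t}$ instead of your explicit $|y|\leq1$ versus $|y|>1$ case distinction.
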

\begin{proof}
  For all $x,z \in \Rn$ with $x\neq z$ and  $\alpha \in \Non$ with $|\alpha| \leq \tilde{m}$ we obtain 
  \begin{align}\label{eq33a}
    \frac{| \p^{\alpha}_xf(x-y)-\p^{\alpha}_x f(x)- \p^{\alpha}_z f(z-y)+ \p^{\alpha}_z f(z)|}{ |x-z|^{t} |y|^{\tau -t}} 
    \leq 2 \| f \|_{C^{\tilde{m},\tau} } 
  \end{align}
    for all $f \in C^{\tilde{m},\tau}(\Rn), y \in \Rn \backslash \{0\}$ by using 
	$$|\p^{\alpha}_xf(x-y)-\p^{\alpha}_x f(x)- \p^{\alpha}_z f(z-y)+ \p^{\alpha}_z f(z)| \leq |\p^{\alpha}_xf(x-y)-\p^{\alpha}_z f(z-y)| + |\p^{\alpha}_xf(x)-\p^{\alpha}_z f(z)|$$
   if $|x-z|\leq |y|$ and 
      $$|\p^{\alpha}_xf(x-y)-\p^{\alpha}_x f(x)- \p^{\alpha}_z f(z-y)+ \p^{\alpha}_z f(z)| \leq |\p^{\alpha}_xf(x-y)-\p^{\alpha}_x f(x)| + |\p^{\alpha}_zf(z-y)-\p^{\alpha}_z f(z)|$$
	else. Hence
\begin{align}\label{eq333a}
	\sup_{x \neq z} \frac{| \p^{\alpha}_xf(x-y)-\p^{\alpha}_x f(x)- \p^{\alpha}_z f(z-y)+ \p^{\alpha}_z f(z)|}{ |x-z|^{t}} 
    \leq 2 \| f \|_{C^{\tilde{m},\tau} }  |y|^{\tau -t}
\end{align}
 for all $f \in C^{\tilde{m},\tau}(\Rn), y \in \Rn \backslash \{0\}$. 
 Using    
$$\|f(.-y)-f\|_{C_b^{\tilde{m}}(\Rn) } \leq \| f \|_{C^{\tilde{m},\tau-t} } |y|^{\tau-t} \leq C \| f \|_{C^{\tilde{m},\tau} } |y|^{\tau-t} \qquad \text{for all }f \in C^{\tilde{m},\tau}(\Rn) $$ 
  and inequality (\ref{eq333a}) provides the claim.
\end{proof}

By means of the definition of the H\"older spaces and the Leibniz-rule we additionally obtain:
\begin{lemma}\label{lemma:PropertyHoelderSpaces}
  Let $\tilde{m} \in \N_0$, $0< \tau < 1$, $\Omega \subseteq \Rn$ be closed and $f,g \in C^{\tilde{m},\tau}(\Omega)$. Then 
  \begin{align*}
    \|fg\|_{C^{\tilde{m},\tau}(\Omega)} \leq \sum_{\tilde{m}_1+\tilde{m}_2=\tilde{m}} C_{\tilde{m}}\left\{ \|f\|_{C^{\tilde{m}_1}_b(\Omega)} \|g\|_{C^{\tilde{m}_2, \tau}(\Omega)} + \|f\|_{C^{\tilde{m}_1, \tau}(\Omega)} \|g\|_{C^{\tilde{m}_2}_b(\Omega)} \right\}.
  \end{align*}
\end{lemma}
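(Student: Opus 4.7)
The statement is the standard product rule estimate for Hölder spaces, so the plan is a direct computation based on the Leibniz rule. First I would write, for any multi-index $\alpha \in \N_0^n$ with $|\alpha| \leq \tilde{m}$,
\begin{align*}
  \p_x^{\alpha}(fg) = \sum_{\beta \leq \alpha} \binom{\alpha}{\beta}\, \p_x^{\beta} f \cdot \p_x^{\alpha-\beta} g.
\end{align*}
Bounding this pointwise yields $\|\p_x^\alpha(fg)\|_{L^\infty(\Omega)} \leq \sum_{\beta \leq \alpha} \binom{\alpha}{\beta} \|\p_x^\beta f\|_{L^\infty(\Omega)} \|\p_x^{\alpha-\beta} g\|_{L^\infty(\Omega)}$, which controls the $C^{\tilde{m}}_b$-part of $\|fg\|_{C^{\tilde{m},\tau}(\Omega)}$ by sums of the form $\|f\|_{C^{\tilde{m}_1}_b(\Omega)} \|g\|_{C^{\tilde{m}_2}_b(\Omega)}$ with $\tilde{m}_1+\tilde{m}_2 \leq \tilde{m}$; these are majorized by the right-hand side of the claim since $\|g\|_{C^{\tilde{m}_2}_b(\Omega)} \leq \|g\|_{C^{\tilde{m}_2,\tau}(\Omega)}$.

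For the Hölder seminorm part I restrict to $|\alpha| = \tilde{m}$ and, for $x \neq y$ in $\Omega$, apply the telescoping identity
\begin{align*}
  \p_x^{\beta}f(x)\, \p_x^{\alpha-\beta}g(x) - \p_x^{\beta} f(y)\, \p_x^{\alpha-\beta} g(y)
  = \p_x^{\beta} f(x)\bigl[\p_x^{\alpha-\beta} g(x) - \p_x^{\alpha-\beta} g(y)\bigr]
   + \bigl[\p_x^{\beta} f(x) - \p_x^{\beta} f(y)\bigr]\p_x^{\alpha-\beta} g(y)
\end{align*}
to each Leibniz summand. Dividing by $|x-y|^\tau$ and using the definitions of $\|\cdot\|_{C^{|\beta|}_b}$ and the Hölder seminorms of $\p_x^{\alpha-\beta}g$ respectively $\p_x^{\beta}f$ produces the bound $\|f\|_{C^{|\beta|}_b(\Omega)}\|g\|_{C^{|\alpha-\beta|,\tau}(\Omega)} + \|f\|_{C^{|\beta|,\tau}(\Omega)}\|g\|_{C^{|\alpha-\beta|}_b(\Omega)}$ for each $\beta \leq \alpha$.

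Taking the supremum over $x \neq y$, summing over $\beta$, then over $\alpha$ with $|\alpha|\leq \tilde{m}$, and collecting terms according to the pair $(\tilde{m}_1,\tilde{m}_2) = (|\beta|, |\alpha-\beta|)$ with $\tilde{m}_1 + \tilde{m}_2 = \tilde{m}$ then yields the stated inequality, with $C_{\tilde{m}}$ absorbing the binomial coefficients. There is essentially no obstacle here; the only point requiring minimal care is to make sure that the terms with $\tilde{m}_1 + \tilde{m}_2 < \tilde{m}$ (coming from the $C^{\tilde{m}}_b$-part) are also absorbed into the displayed sum, which is immediate by monotonicity of the Hölder norms in the order of differentiation.
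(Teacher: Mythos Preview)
Your argument is correct and is exactly the approach the paper indicates: it does not spell out a proof but simply remarks that the estimate follows from the definition of the H\"older spaces together with the Leibniz rule, which is precisely what you carry out via the telescoping identity for the H\"older seminorm.
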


	With interpolation results at hand, the possibility often arises to verify some convergences of symbol-families, cf. e.g. the statement about the approximation of certain non-smooth symbols with smooth ones, see Lemma \ref{lemma:KonvergenzFürDeltaUngleich0} below. Another example is the auxiliary lemma, which is needed in order to verify the regularity result for non-smooth pseudodifferential operators, see Lemma \ref{HilfslemmaFuerLemma3.3} below. 
We need the next five interpolation properties for H\"older spaces throughout this paper:

\begin{lemma}\label{lemma:InterpolationResult}
  Let $k, \tilde{m} \in \N$ with $k \leq \tilde{m}$, $0<\tau < 1$ and $\theta:= \frac{k}{\tilde{m} + \tau}$. Then 
  \begin{align*}
    \|f\|_{C^k_b(\Rn)} \leq C\|f\|^{1-\theta}_{C^0_b(\Rn)} \|f\|^{\theta}_{C^{\tilde{m}, \tau}(\Rn)} \qquad \text{for all } f \in C^{\tilde{m}, \tau}(\Rn).
  \end{align*}
\end{lemma}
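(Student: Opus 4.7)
The strategy is to proceed by induction on $k$, after reducing to the one-dimensional case. For fixed $x \in \Rn$ and standard basis vector $e_i$, the restriction $g(t) := f(x+te_i)$ lies in $C^{\tilde m, \tau}(\R)$ with $\|g\|_{C^{\tilde m,\tau}(\R)} \leq \|f\|_{C^{\tilde m,\tau}(\Rn)}$, so it suffices to prove
\[
\|g^{(k)}\|_\infty \;\leq\; C\,\|g\|_\infty^{1-\theta}\,\|g\|_{C^{\tilde m,\tau}(\R)}^{\theta}, \qquad \theta=\tfrac{k}{\tilde m+\tau},
\]
for every $g \in C^{\tilde m,\tau}(\R)$ and $1\leq k \leq \tilde m$, and then to sum the resulting pointwise bounds on directional derivatives of $f$.

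\textbf{Base case $k=1$.} Taylor's formula together with the Hölder bound on $g^{(\tilde m)}$ yields, for any $h>0$ and $j=0,1,\ldots,\tilde m$,
\[
g(jh) \;=\; \sum_{\ell=0}^{\tilde m}\frac{(jh)^\ell}{\ell!}\,g^{(\ell)}(0) \;+\; R_j(h), \qquad |R_j(h)|\leq C\,h^{\tilde m+\tau}\,\|g\|_{C^{\tilde m,\tau}}.
\]
Introducing $u_\ell := h^\ell g^{(\ell)}(0)$, this is the linear system $(j^\ell/\ell!)_{j,\ell}\,u = (g(jh)-R_j(h))_j$. The matrix $(j^\ell/\ell!)$ is Vandermonde-type and therefore invertible, with an inverse whose entries depend only on $\tilde m$ (notably, not on $h$). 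Reading off $u_1 = hg'(0)$ gives
\[
|g'(0)| \;\leq\; \frac{C}{h}\,\|g\|_\infty \;+\; C\,h^{\tilde m+\tau-1}\,\|g\|_{C^{\tilde m,\tau}},
\]
and choosing $h = \bigl(\|g\|_\infty/\|g\|_{C^{\tilde m,\tau}}\bigr)^{1/(\tilde m+\tau)}$ (the degenerate cases being trivial) balances the two terms and produces the desired inequality with $\theta = 1/(\tilde m+\tau)$.

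\textbf{Inductive step and assembly.} For $k\geq 2$, write $\partial^\alpha = \partial_i \partial^\beta$ with $|\beta|=k-1$. Since $k\leq \tilde m$, the function $\partial^\beta f$ lies in $C^{\tilde m-k+1,\tau}(\Rn)$, so the base case (with $\tilde m$ replaced by $\tilde m-k+1$) gives
\[
\|\partial^\alpha f\|_\infty \;\leq\; C\,\|\partial^\beta f\|_\infty^{1-\theta_0}\,\|\partial^\beta f\|_{C^{\tilde m-k+1,\tau}}^{\theta_0}, \qquad \theta_0 = \tfrac{1}{\tilde m-k+1+\tau}.
\]
Combining this with the inductive bound on $\|\partial^\beta f\|_\infty$ and the trivial estimate $\|\partial^\beta f\|_{C^{\tilde m-k+1,\tau}} \leq \|f\|_{C^{\tilde m,\tau}}$, the exponents telescope via the identity $\theta_0\bigl(1-(k-1)/(\tilde m+\tau)\bigr) = 1-k/(\tilde m+\tau)$, giving the claim for order $k$. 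Finally, for any $j\in\{0,1,\ldots,k\}$, the corresponding bound for $|\alpha|=j$ can be upgraded to the target exponent $\theta = k/(\tilde m+\tau)$ by trading powers of $\|f\|_\infty$ for powers of $\|f\|_{C^{\tilde m,\tau}}$ (using $\|f\|_\infty \leq \|f\|_{C^{\tilde m,\tau}}$), after which summing over $|\alpha|\leq k$ yields the asserted $C^k_b$-bound.

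\textbf{Main obstacle.} The only genuinely nontrivial point is the base case: one has to invert the Vandermonde-type system with bounds on the entries of the inverse that are uniform in $h$, so that the subsequent optimization in $h$ produces the sharp exponent $1/(\tilde m+\tau)$. Everything else — the induction, the telescoping of exponents, and the passage from directional derivatives on lines back to $\Rn$ — is routine bookkeeping.
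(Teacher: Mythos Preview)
The paper does not actually prove this lemma; it simply refers to \cite[Lemma~2.41]{Diss}. Your argument is therefore an independent, self-contained proof, and it is essentially correct: the Taylor--Vandermonde trick for the one-dimensional base case is a standard and clean way to obtain the sharp exponent $1/(\tilde m+\tau)$, and the induction on $k$ correctly lifts this to higher order.

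One slip to fix: the telescoping identity you wrote, $\theta_0\bigl(1-(k-1)/(\tilde m+\tau)\bigr) = 1-k/(\tilde m+\tau)$, is false as stated (the left side equals $1/(\tilde m+\tau)$). What actually makes the induction close is that, after substituting the inductive bound for $\|\partial^\beta f\|_\infty$, the exponent on $\|f\|_\infty$ is $(1-\theta_0)\bigl(1-(k-1)/(\tilde m+\tau)\bigr)$, and \emph{this} equals $1-k/(\tilde m+\tau)$ because $\tilde m-k+1+\tau=(\tilde m+\tau)-(k-1)$. With that correction the bookkeeping goes through.
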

For the proof of the previous lemma we refer to \cite[Lemma 2.41]{Diss}.

\begin{lemma}\label{lemma:interpolation1} 
   Let $k,\tilde{m} \in \N$ and  $0<s, \tau<1$ with $k+s < \tilde{m}+\tau$. Then there are constants $C_{\tilde{m}, k}, C_{\theta}, C_k>0$ such that for $\theta= \frac{k+s}{\tilde{m}+\tau}$ we have
  \begin{itemize}
    \item[i)] $\|f\|_{C^k_b(\Rn \backslash B_1(0))} \leq  C_{\tilde{m}, k} \|f\|_{C^0_b(\Rn \backslash B_1(0))}^{1-\frac{k}{\tilde{m}}}  \|f\|_{C_b^{\tilde{m}}(\Rn \backslash B_1(0))}^{\frac{k}{\tilde{m}}}\quad \forall f \in C_b^{\tilde{m}}(\Rn \backslash B_1(0))$, 
    \item[ii)] $\|f\|_{C^{k,s}(\Rn \backslash B_1(0))} \leq  C_{\theta} \|f\|_{C_b^0(\Rn \backslash B_1(0))}^{1-\theta}  \|f\|_{C^{\tilde{m}, \tau}(\Rn \backslash B_1(0))}^{\theta} \quad \forall f \in C^{\tilde{m},\tau}(\Rn \backslash B_1(0))$,
    \item[iii)] $\max\limits_{|\beta|=k} \|\p_x^{\beta} f\|_{C_b^0(\Rn)} \leq C_k \|f\|^{1-\frac{k}{k+1}}_{C_b^0(\Rn)} \left( \max\limits_{|\alpha|=k+1} \|\p_x^{\alpha}f\|_{C_b^0(\Rn)} \right)^{\frac{k}{k+1}} \quad \forall f \in C_b^{k+1}(\Rn)$.
  \end{itemize}
Here $B_1(0)$ denotes the open ball in $\Rn$ around $0$ with radius 1. 
\end{lemma}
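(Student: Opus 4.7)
The three inequalities are classical multiplicative interpolation estimates for Hölder norms. My plan is to prove (iii) first by a direct Taylor-Vandermonde argument on $\R$, then derive (i) and (ii) from the $\Rn$-case using Lemma \ref{lemma:InterpolationResult} together with a boundary-preserving extension operator.

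For (iii) it suffices to establish the one-dimensional analogue
\[
\|g^{(k)}\|_{\infty} \le C_k\, \|g\|_{\infty}^{1/(k+1)}\, \|g^{(k+1)}\|_{\infty}^{k/(k+1)} \qquad \text{for } g \in C_b^{k+1}(\R),
\]
since the multivariate version then follows by iterating the 1D estimate one coordinate at a time, controlling the intermediate mixed-derivative norms by induction on $k$. For the 1D inequality I would write Taylor's formula of order $k$ at $k+1$ distinct shifts $g(lh)$, $l=1,\dots,k+1$, obtaining a Vandermonde linear system for $g'(0),\dots,g^{(k)}(0)$ whose solution gives
\[
|g^{(k)}(0)| \le C_k\bigl(h^{-k}\|g\|_{\infty} + h\,\|g^{(k+1)}\|_{\infty}\bigr),
\]
and then optimize in $h>0$.

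For (i) and (ii), the only new issue is that the estimates live on $\Rn\setminus B_1(0)$ rather than on $\Rn$. Since $\overline{\Rn\setminus B_1(0)}$ has smooth boundary, a standard extension operator (for instance Stein's total extension) provides a linear map $E$ with $\|Ef\|_{C_b^j(\Rn)} \le C\|f\|_{C_b^j(\Rn\setminus B_1(0))}$ and $\|Ef\|_{C^{j,\sigma}(\Rn)} \le C\|f\|_{C^{j,\sigma}(\Rn\setminus B_1(0))}$ for all $0\le j\le \tilde m$ and $0<\sigma\le \tau$. Applying (iii) to $Ef$ and iterating to pass from interpolation between orders $0$ and $k+1$ to interpolation between orders $0$ and $\tilde m$ yields (i) after restriction. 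Estimate (ii) would then follow by invoking Lemma \ref{lemma:InterpolationResult} (or its Hölder variant that places $C^{k,s}$ on the left, which is itself deducible from (iii) together with a direct Hölder seminorm bound for the remainder) applied to $Ef$ with exponent $\theta=(k+s)/(\tilde m+\tau)$, and then restricting back to $\Rn\setminus B_1(0)$.

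The main obstacle I anticipate is the multivariate reduction in (iii): carefully tracking the intermediate mixed-derivative norms while iterating the 1D Landau-Kolmogorov inequality so that the final constant depends only on $k$ and $n$ requires some bookkeeping. Once this is in hand, (i) and (ii) are routine consequences of the full-space interpolation combined with the extension operator.
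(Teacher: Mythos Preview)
Your proposal is correct and follows the same overall architecture as the paper: for (i) and (ii) one extends from $\Rn\setminus B_1(0)$ to $\Rn$, applies the full-space interpolation inequality, and restricts back; for (iii) one proves a Landau--Kolmogorov type estimate. The differences are in implementation. The paper builds the extension explicitly by higher-order reflection across the boundary (the Lions--Hestenes map) rather than invoking Stein's universal extension, and for the full-space interpolation in (i) it simply cites the standard result from Lunardi rather than deriving it from (iii). For (iii) the paper argues by induction on $k$: the base case $k=1$ is the elementary Taylor inequality $|\partial_{x_i}f(x)|\le 2\|f\|_\infty/h + h\max_{|\alpha|=2}\|\partial^\alpha f\|_\infty$ optimized in $h$, and the inductive step combines this $k=1$ bound applied to $\partial^\alpha f$, $|\alpha|=k$, with the inductive hypothesis. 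This induction works directly in $\Rn$ and completely sidesteps the ``multivariate reduction'' bookkeeping you flagged as the main obstacle in your Vandermonde approach; so while both routes are valid, the paper's inductive argument is shorter and avoids precisely the difficulty you anticipated.
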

\begin{proof}[Sketch of proof:]
  We start with verifying $i)$. Let $ \Rn_+:=\{(x', x_n) \in \R^{n-1}\times \R: x_n>0\}$. We define the extension operator $E: C_b^0(\overline{\Rn_+})\rightarrow C_b^0(\Rn)$ for all $f \in C_b^0(\overline{\Rn_+})$ and $x=(x', x_n) \in \Rn$ by 
  \begin{align*}
    Ef(x):= \left\{ 
     \begin{array}{ll}
	f(x) & \text{if } x_n\geq 0,\\
	a_1 f(x', -x_n) + a_2 f(x', -2x_n) + \ldots + a_{\tilde{m}+1} f(x', -(\tilde{m}+1)x_n) & \text{else}.
     \end{array}
\right.
  \end{align*}
  Analogous to \cite[Chapter 1]{Lunardi} one can show the existence of $a_1, \ldots, a_{\tilde{m}+1} \in \R$ such that $E \in \mathscr{L}(C_b^k(\overline{\Rn_+}), C_b^k(\Rn)) $ for all $k \in\{0, \ldots, \tilde{m}\}$. This implies for $\Omega= \Rn_+$:
  \begin{align}\label{ee1}
    E \in \mathscr{L}(C_b^{\theta}(\overline{\Omega}), C_b^{\theta}(\Rn)) \qquad \text{for all } \theta \in [0, \tilde{m}].
  \end{align}
  By means of the standard localization argument it can be shown that (\ref{ee1}) holds for all open sets $\Omega \subseteq \Rn$ with bounded $C_b^{\tilde{m}}-$ boundary. Now we choose $\Omega:= \Rn \backslash \overline{B_1(0)}$. The restriction operator 
  \begin{align*}
    R(f):= f|_{\overline{\Omega}} \qquad \text{for all } f \in C^0(\Rn)
  \end{align*}
  obviously belongs  to $\mathscr{L}(C_b^k(\Rn); C_b^k(\overline{\Omega}))$ for all $k \in \{0, \ldots, \tilde{m}\}$ and because of an interpolation result, see e.g.\  \cite[Remark 1.3.4]{Lunardi} there is a constant $C_{\tilde{m}, k}>0$ such that
  \begin{align*}
    \|f\|_{C_b^k(\Rn \backslash B_1(0))} 
    &= \|R(Ef)\|_{C_b^k(\Rn \backslash B_1(0))} 
    \leq C_{\tilde{m}, k} \|Ef\|_{C_b^0(\Rn)}^{1-\frac{k}{\tilde{m}}} \|Ef\|_{C_b^{\tilde{m}}(\Rn)}^{\frac{k}{\tilde{m}}}\\
    &\leq C_{\tilde{m}, k} \|f\|_{C_b^0(\Rn  \backslash B_1(0))}^{1-\frac{k}{\tilde{m}}} \|f\|_{C_b^{\tilde{m}}(\Rn  \backslash B_1(0))}^{\frac{k}{\tilde{m}}} \qquad \text{for all } f \in C_b^{\tilde{m}}(\Rn \backslash B_1(0)).
  \end{align*}
  Claim $ii)$ can be proved in a similar way. It remains to show $iii)$. This is done by mathematical induction with respect to $k$. For $k=1$ one gets by means of the Taylor expansion formula for all $i\in \{1, \ldots, n\}$ and $h>0$: 
  \begin{align*}
    |\p_{x_i}f(x)| 
    &\leq \frac{|f(x+he_j)-f(x)|}{h} + \|\p_{x_i}\p_{x_j}f\|_{C_b^0(\Rn)} \cdot h \\
    &\leq \frac{2\|f\|_{C_b^0(\Rn)}}{h} +  \max_{|\alpha|=2} \|\p_x^{\alpha}f\|_{C_b^0(\Rn)} \cdot h, \qquad f \in C_b^{k+1}(\Rn). 
  \end{align*}
  Choosing $h= \|f\|_{C_b^0(\Rn)}^{1/2} \left(  \max_{|\alpha|=2} \|\p_x^{\alpha}f\|_{C_b^0(\Rn)} \right)^{-1/2}$ and taking the maximum over $x \in \Rn$ and $i\in \{1, \ldots, n\}$ proves $iii)$ for $k=1$.
  Assuming, that $iii)$ already holds for $k \in \N$ we obtain due to the case $k=1$:
  \begin{align*}
    &\max_{|\beta|=k+1} \|\p_x^{\beta}f\|_{C_b^0(\Rn)} 
    \leq  C \max_{|\alpha|=k} \|\p_x^{\alpha}f\|_{C_b^0(\Rn)}^{1/2} \left(  \max_{|\alpha|=k+2} \|\p_x^{\alpha}f\|_{C_b^0(\Rn)} \right)^{1/2} \\
    &\qquad \qquad \leq C \left\{ \|f\|^{1-\frac{k}{k+1}}_{C_b^0(\Rn)} \left( \max\limits_{|\beta|=k+1} \|\p_x^{\beta}f\|_{C_b^0(\Rn)} \right)^{\frac{k}{k+1}} \right\}^{1/2}
	  \left(  \max_{|\alpha|=k+2} \|\p_x^{\alpha}f\|_{C_b^0(\Rn)} \right)^{1/2}
  \end{align*}
  for all $f \in C_b^{k+2}(\Rn)$. Dividing the previous inequality through the second term of the right side provides:
  \begin{align*}
    &\max_{|\beta|=k+1} \|\p_x^{\beta}f\|_{C_b^0(\Rn)} 
    \leq C_{k+1} \|f\|^{1-\frac{k+1}{k+2}}_{C_b^0(\Rn)}  \left(  \max_{|\alpha|=k+2} \|\p_x^{\alpha}f\|_{C_b^0(\Rn)} \right)^{\frac{k+1}{k+2}} 
  \end{align*}
  for all $f \in C_b^{k+2}(\Rn)$, which proves the last statement. 
\end{proof}

\begin{lemma}\label{lemma:interpolation2} 
  Let $\tilde{m} \in \N$ and $\alpha \in \Non$ with $|\alpha| \leq \tilde{m}$ be arbitrary. Then there is a constant $C_{|\alpha|}>0$ such that  for $\theta =\frac{|\alpha|}{\tilde{m}}$ we have
  \begin{align*}
    \sup_{x \in \Rn \backslash B_R(0)} \left| \p_x^{\alpha} f(x) \right| \leq C_{|\alpha|} \|f\|^{1-\theta}_{C_b^0(\Rn \backslash B_R(0))}  \|f\|^{\theta}_{C_b^{\tilde{m}}(\Rn \backslash B_R(0))} \qquad \forall f\in C_b^{\tilde{m}}(\Rn), R\geq 1.
  \end{align*}

\end{lemma}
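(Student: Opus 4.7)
The plan is to reduce this exterior-domain interpolation inequality to the one already established for the fixed exterior domain $\Rn \setminus B_1(0)$ in Lemma \ref{lemma:interpolation1} i), by means of a scaling argument. The key point of Lemma \ref{lemma:interpolation2} (as compared with part i) of the previous lemma) is that the constant must be uniform in $R \geq 1$, so we must verify that all factors of $R$ produced by the rescaling cancel.

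First I would dispose of the trivial cases. If $|\alpha| = 0$ then $\theta = 0$ and the estimate is just $\sup|f| \le \|f\|_{C^0_b}$. If $|\alpha| = \tilde m$ then $\theta = 1$ and we have $\sup|\partial^\alpha f| \le \|f\|_{C^{\tilde m}_b}$. So I may assume $0 < k := |\alpha| < \tilde m$. I then define the rescaled function $f_R(y) := f(Ry)$ for $y \in \Rn$ and compute $\partial_y^\alpha f_R(y) = R^{|\alpha|}(\partial^\alpha f)(Ry)$, which gives the key scaling identity
\begin{equation*}
\sup_{|y|\geq 1} |\partial_y^\alpha f_R(y)| \;=\; R^{k}\, \sup_{|x|\geq R}|\partial_x^\alpha f(x)|,
\end{equation*}
as well as $\|f_R\|_{C^0_b(\Rn \setminus B_1(0))} = \|f\|_{C^0_b(\Rn \setminus B_R(0))}$ and, crucially using $R\ge 1$ so that $R^{|\beta|} \le R^{\tilde m}$ for all $|\beta|\le \tilde m$,
\begin{equation*}
\|f_R\|_{C^{\tilde m}_b(\Rn\setminus B_1(0))} \;\le\; R^{\tilde m}\, \|f\|_{C^{\tilde m}_b(\Rn\setminus B_R(0))}.
\end{equation*}

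Next I would apply Lemma \ref{lemma:interpolation1} i) to $f_R$ on the domain $\Rn \setminus B_1(0)$, noting that $\sup_{|y|\geq 1} |\partial_y^\alpha f_R(y)| \leq \|f_R\|_{C^k_b(\Rn\setminus B_1(0))}$. This yields
\begin{equation*}
R^{k}\, \sup_{|x|\geq R}|\partial_x^\alpha f(x)|
\;\le\; C_{\tilde m,k}\, \|f\|_{C^0_b(\Rn\setminus B_R(0))}^{\,1-k/\tilde m}\, \bigl(R^{\tilde m}\bigr)^{k/\tilde m}\, \|f\|_{C^{\tilde m}_b(\Rn\setminus B_R(0))}^{\,k/\tilde m}.
\end{equation*}
The factor $(R^{\tilde m})^{k/\tilde m} = R^k$ on the right then cancels with the $R^k$ on the left, giving exactly the desired estimate with $C_{|\alpha|} := C_{\tilde m,k}$ independent of $R \geq 1$ and of $f$.

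The only point that requires mild care is the uniform cancellation of the $R$-factors, which depends on using $R\ge 1$ when bounding $\|f_R\|_{C^{\tilde m}_b(\Rn\setminus B_1(0))}$; if one tried to allow $R < 1$ the same argument would break because the lower-order derivatives in $\|f_R\|_{C^{\tilde m}_b}$ scale with a smaller power of $R$ than the top-order ones. No other step is delicate: the matched homogeneity of both sides under the scaling $x\mapsto Rx$ is precisely what makes the constant independent of $R$, and Lemma \ref{lemma:interpolation1} i) supplies the interpolation on the fixed exterior domain $\Rn\setminus B_1(0)$.
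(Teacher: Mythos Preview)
Your proof is correct and follows essentially the same approach as the paper: define the rescaled function $f_R(y)=f(Ry)$, apply Lemma~\ref{lemma:interpolation1}~i) on the fixed exterior domain $\Rn\setminus B_1(0)$, and observe that the powers of $R$ cancel exactly. You are in fact slightly more explicit than the paper in justifying the bound $\|f_R\|_{C^{\tilde m}_b(\Rn\setminus B_1(0))}\le R^{\tilde m}\|f\|_{C^{\tilde m}_b(\Rn\setminus B_R(0))}$ via $R\ge 1$, which the paper leaves implicit.
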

\begin{proof}
   Let $\alpha \in \Non$ with $|\alpha| \leq \tilde{m}$ be arbitrary and $\theta :=\frac{|\alpha|}{\tilde{m}}$. We define for all $f \in C^{\tilde{m}}(\Rn)$ and each $R\geq 1$ the function $f_R:\Rn \backslash B_1(0) \rightarrow \C$ by
   \begin{align*}
      f_R(x):= f(Rx) \qquad \text{for all } x\in \Rn \backslash B_1(0).
   \end{align*}
  On account of $\p_x^{\alpha} f_R(x) = R^{|\alpha|}(\p_x^{\alpha}f)_R(x)$ for all $x \in \Rn \backslash B_1(0)$ we obtain $(\p_x^{\alpha} f_R)(R^{-1}x) \cdot R^{-|\alpha|}=\p_x^{\alpha}f(x)$  for all $x \in \Rn \backslash B_R(0)$. Hence we get together with Lemma \ref{lemma:interpolation1}  
  \begin{align*}
    \sup_{x \in \Rn \backslash B_R(0)} \left| \p_x^{\alpha} f(x)\right| 
    &\leq R^{-|\alpha|} \|\p_x^{\alpha} f_R\|_{L^{\infty}( \Rn \backslash B_1(0))}\\
    &\leq C_{|\alpha|, \tilde{m}}  R^{-|\alpha|} \| f_R\|_{C_b^0( \Rn \backslash B_1(0))}^{1-\theta} \| f_R\|_{C_b^{\tilde{m}}( \Rn \backslash B_1(0))}^{\theta}\\
     &\leq C_{|\alpha|, \tilde{m}}  \| f\|_{C_b^0( \Rn \backslash B_R(0))}^{1-\theta} \| f\|_{C_b^{\tilde{m}}( \Rn \backslash B_R(0))}^{\theta} 
  \end{align*}
  for all $R\geq 1, f\in C_b^{\tilde{m}}(\Rn)$.
\end{proof}

\begin{lemma}\label{lemma:interpolation3} 
  Let $\tilde{m} \in \N$ and $\alpha \in \Non$ with $|\alpha| \leq \tilde{m}$. Additionally let $0<s<\tau <1$. Then there are constants $C_{|\alpha|, \tilde{m}}, C_{\theta}>0$ such that for $\theta:= \frac{|\alpha|+s}{\tilde{m}+\tau}$ we have
  \begin{align*}
    \|\p_x^{\alpha} f\|_{C^{0,s}(\Rn \backslash B_R(0))} \leq C_{|\alpha|, \tilde{m}} &\|f\|_{C^0_b(\Rn \backslash B_R(0))}^{1-\frac{|\alpha|}{\tilde{m}}} \|f\|_{C^{\tilde{m}}_b(\Rn \backslash B_R(0))}^{\frac{|\alpha|}{\tilde{m}}}\\
    &+ C_{\theta}  \|f\|_{C^0_b(\Rn \backslash B_R(0))}^{1-\theta} \|f\|_{C^{\tilde{m}, \tau}(\Rn \backslash B_R(0))}^{\theta} 
  \end{align*}
  for all $f\in C^{\tilde{m}, \tau}(\Rn)$ and $ R\geq 1$.
\end{lemma}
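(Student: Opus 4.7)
The plan is to split $\|\partial_x^\alpha f\|_{C^{0,s}(\R^n\setminus B_R(0))}$ into its two natural constituents, namely the sup-norm $\|\partial_x^\alpha f\|_{C^0_b(\R^n\setminus B_R(0))}$ and the Hölder seminorm $[\partial_x^\alpha f]_{C^{0,s}(\R^n\setminus B_R(0))}$, and bound each separately. The sup-norm piece is immediate: Lemma \ref{lemma:interpolation2} applied with the same $\alpha$ yields exactly the first summand $C_{|\alpha|,\tilde m}\|f\|_{C^0_b}^{1-|\alpha|/\tilde m}\|f\|_{C^{\tilde m}_b}^{|\alpha|/\tilde m}$ on the right-hand side.

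For the Hölder seminorm I would mimic the rescaling trick from the proof of Lemma \ref{lemma:interpolation2}. Setting $f_R(x):=f(Rx)$ on $\R^n\setminus B_1(0)$ and using $(\partial_x^\alpha f_R)(R^{-1}x) = R^{|\alpha|}\partial_x^\alpha f(x)$ together with $|x-y|=R|R^{-1}x-R^{-1}y|$, one gets the exact scaling identity
\[
[\partial_x^\alpha f]_{C^{0,s}(\R^n\setminus B_R(0))} \;=\; R^{-|\alpha|-s}\,[\partial_x^\alpha f_R]_{C^{0,s}(\R^n\setminus B_1(0))} \;\leq\; R^{-|\alpha|-s}\,\|f_R\|_{C^{|\alpha|,s}(\R^n\setminus B_1(0))}.
\]
Since $|\alpha|\leq \tilde m$ and $s<\tau$, we have $|\alpha|+s<\tilde m+\tau$, so Lemma \ref{lemma:interpolation1} ii) applies on the domain $\R^n\setminus B_1(0)$ with exponents $k=|\alpha|$ and $\theta=(|\alpha|+s)/(\tilde m+\tau)$ and bounds the right-hand side by $C_\theta\|f_R\|_{C^0_b}^{1-\theta}\|f_R\|_{C^{\tilde m,\tau}}^\theta$ on $\R^n\setminus B_1(0)$.

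It remains to return from $f_R$ to $f$. Trivially $\|f_R\|_{C^0_b(\R^n\setminus B_1(0))}=\|f\|_{C^0_b(\R^n\setminus B_R(0))}$, with no $R$-factor. The full Hölder norm, however, does pick up powers of $R$: for every $|\beta|\leq\tilde m$ the scaling gives $\|\partial_x^\beta f_R\|_{C^0_b(\R^n\setminus B_1(0))}=R^{|\beta|}\|\partial_x^\beta f\|_{C^0_b(\R^n\setminus B_R(0))}$ and $[\partial_x^\beta f_R]_{C^{0,\tau}(\R^n\setminus B_1(0))}=R^{|\beta|+\tau}[\partial_x^\beta f]_{C^{0,\tau}(\R^n\setminus B_R(0))}$, and since $R\geq 1$ all of these are bounded by $R^{\tilde m+\tau}\|f\|_{C^{\tilde m,\tau}(\R^n\setminus B_R(0))}$ up to a combinatorial constant. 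Inserting these bounds into the estimate above produces a prefactor $R^{-|\alpha|-s}\cdot R^{(\tilde m+\tau)\theta}$, and the choice $\theta=(|\alpha|+s)/(\tilde m+\tau)$ is designed so that this prefactor equals $1$, yielding the second summand of the claim and no residual $R$-dependence.

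The only point requiring any care is this cancellation of the $R$-weights; once the exponent $\theta$ is chosen as prescribed, the scaling is tight and the argument goes through uniformly in $R\geq 1$. The strictness $s<\tau$ (together with $|\alpha|\leq\tilde m$) is exactly what is needed so that the hypothesis $k+s<\tilde m+\tau$ of Lemma \ref{lemma:interpolation1} ii) is met, so this part of the proof hinges on a single structural observation rather than on any delicate estimate.
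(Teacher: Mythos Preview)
Your proposal is correct and follows essentially the same route as the paper: split the $C^{0,s}$ norm into its sup and H\"older-seminorm parts, handle the seminorm via the rescaling $f_R(x)=f(Rx)$ on $\Rn\setminus B_1(0)$ together with Lemma~\ref{lemma:interpolation1}~ii), and observe that the $R$-powers cancel precisely because $\theta=(|\alpha|+s)/(\tilde m+\tau)$. The only cosmetic difference is that for the sup-norm piece you invoke Lemma~\ref{lemma:interpolation2} directly, whereas the paper re-derives that bound inline from the same rescaling combined with Lemma~\ref{lemma:interpolation1}~i); since Lemma~\ref{lemma:interpolation2} itself is proved exactly that way, the two arguments are the same in substance.
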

\begin{proof}
  Let $\alpha \in \Non$ with $|\alpha| \leq \tilde{m}$ and $0<s<\tau$ be arbitrary. We define  $\theta:= \frac{|\alpha|+s}{\tilde{m}+\tau}$. Moreover we define for each $f \in C^{\tilde{m}, \tau}(\Rn)$ and each $R\geq 1$ the function $f_R:\Rn \backslash B_1(0) \rightarrow \C$ as in the proof of Lemma \ref{lemma:interpolation2}. Because of $(\p_x^{\alpha} f_R)(R^{-1}x) \cdot R^{-|\alpha|}=\p_x^{\alpha}f(x)$  for all $x \in \Rn \backslash B_R(0)$ we get
  \begin{align*}
    \|\p_x^{\alpha} f\|_{C^{0,s}(\Rn \backslash B_R(0))} 
    &=R^{-|\alpha|}\sup_{x\in \Rn\backslash B_R(0)} \left| \p_x^{\alpha} f_R(R^{-1}x) \right| \\
    &\qquad + R^{-|\alpha|} \sup_{ \substack{x,y \in \Rn\backslash B_R(0) \\ x\neq y} } \frac{|(\p_x^{\alpha}f_R)(R^{-1}x)-(\p_y^{\alpha}f_R)(R^{-1}y)|}{|x-y|^s}\\
    &\leq R^{-|\alpha|} \|f_R\|_{C_b^{|\alpha|}(\Rn\backslash B_1(0))} + R^{-|\alpha|-s} \|f_R\|_{C^{|\alpha|, s}(\Rn\backslash B_1(0) )}.
  \end{align*}
  An application of Lemma \ref{lemma:interpolation1} yields
  \begin{align*}
      &\|\p_x^{\alpha} f\|_{C^{0,s}(\Rn \backslash B_R(0))}
      \leq R^{-|\alpha|} C_{|\alpha|, \tilde{m}} \|f_R\|^{1-\frac{|\alpha|}{\tilde{m}}}_{C_b^{0}(\Rn\backslash B_1(0))} \|f_R\|^{\frac{|\alpha|}{\tilde{m}}}_{C_b^{\tilde{m}}(\Rn\backslash B_1(0))}\\
      &\qquad \qquad \qquad \qquad \qquad + R^{-|\alpha|-s} \|f_R\|^{1-\theta}_{C_b^{0}(\Rn\backslash B_1(0) )} \|f_R\|^{\theta}_{C^{\tilde{m}, \tau}(\Rn\backslash B_1(0) )}\\
      &\qquad \quad \leq C_{|\alpha|, \tilde{m}} \|f\|_{C^0_b(\Rn \backslash B_R(0))}^{1-\frac{|\alpha|}{\tilde{m}}} \|f\|_{C^{\tilde{m}}_b(\Rn \backslash B_R(0))}^{\frac{|\alpha|}{\tilde{m}}}
    + C_{\theta}  \|f\|_{C^0_b(\Rn \backslash B_R(0))}^{1-\theta} \|f\|_{C^{\tilde{m}, \tau}(\Rn \backslash B_R(0))}^{\theta}.
  \end{align*}
\end{proof}

Using the definition of the Hölder spaces 
we obtain: 
\begin{bem}\label{bem:MatrixwertigerHölderRaum}
  Let $0 <\tau<1$, $\tilde{m} \in \N_0$ and $l \in \N$. Then we have some constants $C_1, C_2>0$ such that
  \begin{align*}
    \|f\|_{C^{\tilde{m},\tau}(\Rn; \C^{l\times l})} \leq C_1 \max_{i,j = 1, \ldots, l} \| f_{ij}\|_{C^{\tilde{m},\tau}(\Rn)}\leq  C_2\|f\|_{C^{\tilde{m},\tau}(\Rn; \C^{l\times l})}
	\end{align*}
 for all $ f=(f_{ij})_{i,j=1}^l \in C^{\tilde{m},\tau}(\Rn; \C^{l\times l})$.
\end{bem}

 The \textit{Bessel potential space} $H^s_p(\Rn)$, $s \in \R$ and $1<p<\infty$ is defined by 
\begin{align}\label{BesselPotentialSpace} 
  H^s_p(\Rn):= \{ f \in \sd: \<{D_x}^s f \in L^p(\Rn)  \}
\end{align}
where $\<{D_x}^s:=\op(\<{\xi}^s) $.\\

A generalization of the Bessel potential spaces are the \textit{Triebel-Lizorkin spaces} $F^s_{p,q}(\Rn)$ with $s \in \R$ and $0 < p,q < \infty$. For the definition of these spaces we define a dyadic partition of unity $(\varphi_j)_{j \in \N_0}$ in the usual way: Let $\varphi_0 \in C^{\infty}_c(\Rn)$ with $\varphi_0(\xi)=1$ if $|\xi|\leq 1$ and $\varphi_0(\xi)=0$ if $|\xi|\geq 2$. Then 
   \begin{align*}
      \varphi_j(\xi):= \varphi_0(2^{-j}\xi)-\varphi_0(2^{-j-1}\xi) \qquad \text{for all } \xi \in \Rn, j \in \N.
   \end{align*}
The Triebel-Lizorkin space  $F^s_{p,q}(\Rn)$ with $s \in \R$ and $0 < p,q < \infty$ is defined by
\begin{align*}
	F^s_{p,q}(\Rn)&:=\{ f \in \sd: \|f\|_{F^s_{p,q}} < \infty \}, \qquad \text{where}\\
	 \|f\|_{F^s_{p,q}}&:= \left\| \left(\sum_{j=0}^{\infty} 2^{qjs} |\varphi_j(D_x) f(x) |^q\right)^{1/q} \right\|_{L^p(\Rn_x)} .
\end{align*}
Note, that the quasi-Banach space $F^s_{p,q}(\Rn)$ is independent of the choice of the dyadic partition of unity  $(\varphi_j)_{j \in \N_0}$, see e.g.\  \cite{Tr} or \cite{RunstSickel} for more information about these spaces. We just want to mention those properties which are needed in this paper:
\begin{itemize}
	\item $F^s_{p,q}(\Rn)$ is even a Banach space, if $p,q \geq 1$,
	\item $F^s_{p,2}(\Rn)= H^s_p(\Rn)$ for all $1<p<\infty$,
\end{itemize} 
To get a better readability of the paper, we use the notation of \cite[p.922]{Marschall} and define $H^s_1(\Rn):=F^s_{1,2}(\Rn)$ for all $s \in \R$. 
Note that there are different definitions in the literature.
The definition of $F^s_{p,q}(\Rn)$ can not be extended for $p=\infty$, since then the space would not be independent of the choice of the dyadic partition of unity  $(\varphi_j)_{j \in \N_0}$. However, with 
\begin{align*}
	Q_{j,l}:= \{x=(x_1, \ldots, x_n) \in \Rn: 2^{-j}l_i \leq x_i \leq 2^{-j}(l_i+1), i=1, \ldots, n \} 
\end{align*}
 for all $ j \in \N_0$ and $ l \in \Z^n$,  the space $F^s_{\infty,q}(\Rn)$,  $s \in \R$ and $0 < q < \infty$,  is defined in the following way, see e.g.\  \cite[Chapter 2]{RunstSickel}:
\begin{align*}
	F^s_{\infty,q}(\Rn)&:=\{ f \in \sd: \|f\|_{F^s_{\infty,q}} < \infty \}, \qquad \text{where}\\
	 \|f\|_{F^s_{\infty,q}}&:= \sup_{j \in \N_0} \sup_{l \in \Z^n} \left(  2^{jn} \int_{Q_{j,l}} \left( \sum_{k=1}^{\infty}  2^{qks} |\varphi_k(D_x) f(x) |^q\right)dx  \right)^{1/q} .
\end{align*}
According to \cite[Section 2.1.5]{RunstSickel}, we have 
\begin{align}\label{eq:DualspaceOfTribelLizorkinSpace}
	\left( H^s_1(\Rn) \right)'=F^{-s}_{\infty, 2}(\Rn) \qquad \text{for all } s\in \R.
\end{align} 

Moreover we mention some notations concerning the symbol-classes: In case $l=1$ we write $C^{\tilde{m},s} S^m_{\rho,\delta}(\Rn \times \Rn, M)$, $M \in \N_0 \cup \{\infty \}$, and $S^m_{\rho,\delta}(\Rn \times \Rn)$ instead of $C^{\tilde{m},s} S^m_{\rho,\delta}(\Rn \times \Rn, M; \mathscr{L}(\C^l))$ and $S^m_{\rho,\delta}(\Rn \times \Rn;\mathscr{L}(\C^l))$ respectively. Additionally $C^{\tilde{m},s} S^m_{\rho,\delta}(\Rn \times \Rn, M ; \mathscr{L}(\C^l))$ can be considered as a Fr\'{e}chet space with respect to the semi-norms
\begin{align*}
  |a|^{(m)}_{k, C^{\tilde{m},s} S^m_{\rho, \delta}}:= \max_{|\alpha| \leq k} \sup_{\xi \in \Rn} \left\{ \|\pa{\alpha} a(.,\xi) \|_{C^{\tilde{m},s}(\Rn)} \<{\xi}^{-m+ \rho|\alpha| - \delta (\tilde{m}+s)} + Se_{\alpha}(\xi) \right\},
\end{align*}
for all $k \in \N_0$ 
with $k \leq M$ 
and $a \in C^{\tilde{m},s} S^m_{\rho, \delta}(\Rn \times \Rn, M)$, 
where $Se_{\alpha}(\xi)=0$ if $\delta =0$ and 
  $Se_{\alpha}(\xi) = \|\pa{\alpha} a(.,\xi) \|_{C^0_b(\Rn)} \<{\xi}^{-m+ \rho|\alpha|}$
else, in the case $l=1$ and 
\begin{align*}
  |a|^{(m)}_{k, C^{\tilde{m},s} S^m_{\rho, \delta}(\Rn \times \Rn, M; \mathscr{L}(\C^l))}:= \max_{i,j=1, \ldots, l}  |a_{i,j}|^{(m)}_{k, C^{\tilde{m},s} S^m_{\rho, \delta}},
\end{align*}
for all $k \in \N_0$ and $a =(a_{i,j})_{i,j=1}^l \in C^{\tilde{m},s} S^m_{\rho, \delta}(\Rn \times \Rn; \mathscr{L}(\C^l))$. 
In particular the symbol class $C^{\tilde{m},s} S^m_{\rho,\delta}(\Rn \times \Rn, M; \mathscr{L}(\C^l))$ is a Banach space if $N \neq \infty$ with norm $  |a|^{(m)}_{M, C^{\tilde{m},s} S^m_{\rho, \delta}(\Rn \times \Rn, M; \mathscr{L}(\C^l))}$.
\\

One of the two main results of this paper is the invariance of the Fredholm index for certain non-smooth pseudodifferential-operators, cf.\ Theorem \ref{thm:invarianceOfIndex}. Hence we finally add the definition of an Fredholm operator and of the Fredholm index:

\begin{defn}
  Let $X,Y$ be Banach spaces and $T \in \mathscr{L}(X,Y)$. Then $T$ is called Fredholm operator if its kernel  $\ker (T)$ is finite dimensional and if its range $\Im(T)$ is closed and has finite co-dimension, i.e.~, there is a finite dimensional subspace $Z\subseteq Y$ such that $Y= \operatorname{Im}(T)\oplus Z$.
\end{defn}

\begin{defn}
	 Let $X,Y$ be Banach spaces and $T:X \rightarrow Y$ be a Fredholm operator. Then the Fredholm index of $T$ is defined by
\begin{align*}
	\operatorname{ind}(T):= \dim(\operatorname{ker} (T) ) - \operatorname{codim}(\operatorname{Im}(T) ).
\end{align*}
\end{defn}

\subsection{Space of Amplitudes and Oscillatory Integrals}\label{subsection:SpaceOfAmplitudes}

For the definition of pseudodifferential operators we need the so-called oscillatory integrals. They are defined for all elements of the \textbf{space of amplitudes} $\mathscr{A}^{m,N}_{\tau,M}(\RnRn)$, $ N,M \in \N_0 \cup \{ \infty \}$, $m,\tau \in \R$. A function  $a:\Rn \times \Rn \rightarrow \C$ is in the set $\mathscr{A}^{m,N}_{\tau,M}(\RnRn)$, $ N,M \in \N_0 \cup \{ \infty \}$, $m,\tau \in \R$, if for all
$\alpha, \beta \in \Non$ with $|\alpha| \leq N$, $|\beta| \leq M$ we have
  \begin{enumerate}
    \item[i)] $\p^{\alpha}_{\eta} \p^{\beta}_{y} a(y,\eta) \in C^0(\RnRnx{y}{\eta})$,
    \item[ii)] $\left|\p^{\alpha}_{\eta} \p^{\beta}_{y} a(y, \eta) \right| \leq C_{\alpha, \beta} (1 + |\eta|)^m (1 + |y|)^{\tau}$ for all $y, \eta \in \Rn$,
  \end{enumerate} 
where the existence of all occuring derivatives is implicitly assumed. The  \textbf{oscillatory integral} of $a \in \mathscr{A}^{m,N}_{\tau,M}(\RnRn)$ is defined by 
\begin{align} \label{DefOsziInt}
  \osint e^{-iy \cdot \eta} a(y,\eta) dy \dq \eta := \lim_{\e \rightarrow 0} \iint \chi(\e y, \e \eta) e^{-iy \cdot \eta} a(y,\eta) dy \dq \eta, 
\end{align}
where $\chi \in \mathcal{S}(\RnRn)$ with $\chi(0,0)=1$. 
\makebox{}\\

For all $m \in \N$ we define 
\begin{align*}
  A^m(D_{x},\xi) &:= \<{\xi}^{-m} \<{D_x}^{m} \qquad \qquad  \qquad \qquad  \qquad \qquad \qquad \quad   \text{ if } m \text{ is even},\\
  A^m(D_{x},\xi) &:= \<{\xi}^{-m-1} \<{D_x}^{m-1} -\sum_{j=1}^n \<{\xi}^{-m} \frac{\xi_j}{\<{\xi} } \<{D_x}^{m-1} D_{x_j} \quad \text{ else}.
\end{align*}
We now summarize all properties of the oscillatory integral needed in this paper. For the proof of those results we refer to  \cite[Section 2.1]{Paper3}.

\begin{thm}\label{thm:propertiesOsciInt}
  Let $m, \tau \in \R$ and $N,M \in \N_0 \cup \{ \infty \}$ with $N>n + \tau$. Moreover let $l, l' \in \N$ with $N \geq l'> n+\tau$ and $M \geq l > n+m$. 
  Then the oscillatory integral (\ref{DefOsziInt}) exists for all $a \in \mathscr{A}^{m,N}_{\tau,M}(\RnRn)$ and we have for all $l_1,l_2 \in \N$ with $l_1\leq N$ and $l_2 \leq l$:
  \begin{align*}
    \osint e^{-iy\cdot \eta} a(y,\eta) dy \dq \eta &= \iint e^{-iy\cdot \eta} A^{l'}(D_{\eta}, y) A^l(D_y, \eta) a(y,\eta) dy \dq \eta, \\
     \osint e^{-iy\cdot \eta} a(y,\eta) dy \dq \eta &=  \osint e^{-iy\cdot \eta}A^{l_1}(D_{\eta}, y) A^{l_2} (D_y, \eta) a(y,\eta) dy \dq \eta.
  \end{align*}
\end{thm}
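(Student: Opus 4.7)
The plan is to justify the limit in the definition (\ref{DefOsziInt}) and both equalities of the claim by integration by parts, based on the algebraic identities
\begin{align*}
A^l(D_y,\eta)\, e^{-iy\cdot\eta} = e^{-iy\cdot\eta}, \qquad A^{l'}(D_\eta,y)\, e^{-iy\cdot\eta} = e^{-iy\cdot\eta},
\end{align*}
which are precisely the motivation for the form of $A^l$ and $A^{l'}$. For even $l$ the identity follows immediately from $\<{D_y}^l e^{-iy\cdot\eta} = \<{\eta}^l e^{-iy\cdot\eta}$; for odd $l$ it is obtained by combining $\<{D_y}^{l-1} e^{-iy\cdot\eta} = \<{\eta}^{l-1} e^{-iy\cdot\eta}$ with $D_{y_j} e^{-iy\cdot\eta} = -\eta_j e^{-iy\cdot\eta}$ and the relation $1 + |\eta|^2 = \<{\eta}^2$, so that the correction term in the definition of $A^l$ exactly compensates for the fact that $\<{\eta}^l$ is not polynomial in $\eta$ in the odd case.

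With this in hand, I would fix $\chi\in\mathcal{S}(\RnRn)$ with $\chi(0,0)=1$ and consider the regularized integral
\begin{align*}
I_\e := \iint \chi(\e y,\e\eta)\, e^{-iy\cdot\eta} a(y,\eta)\, dy \dq\eta
\end{align*}
appearing in the definition. Inserting the two identities one after the other and integrating by parts in $y$ and in $\eta$ transfers the operators off of $e^{-iy\cdot\eta}$ onto the product $\chi(\e\cdot,\e\cdot)\, a(\cdot,\cdot)$; no boundary terms arise because $\chi(\e\cdot,\e\cdot)$ is Schwartz. The signs and commutators built into $A^l$ and $A^{l'}$ are precisely arranged so that, after IBP, the operator acting on $\chi\, a$ is $A^{l'}(D_\eta,y) A^l(D_y,\eta)$ itself. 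A Leibniz expansion then writes the integrand as the principal term $\chi(\e y,\e\eta)\, A^{l'}(D_\eta,y) A^l(D_y,\eta) a(y,\eta)$ plus finitely many remainder terms, each containing a derivative of positive order of $\chi(\e\cdot,\e\cdot)$ and hence carrying a factor $\e^{|\gamma|} > 0$.

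Under the assumptions $l > n + m$ and $l' > n + \tau$, condition (ii) of the amplitude class yields
\begin{align*}
\left| A^{l'}(D_\eta,y) A^l(D_y,\eta) a(y,\eta) \right| \leq C\, \<{\eta}^{m-l} \<{y}^{\tau - l'},
\end{align*}
whose right-hand side is integrable over $\RnRn$. This provides the uniform majorant needed for Lebesgue's dominated convergence theorem: letting $\e\to 0$ shows simultaneously that the defining limit of $\osint e^{-iy\cdot\eta} a(y,\eta)\, dy\dq\eta$ exists, that each $\e$-remainder vanishes, and that the limit coincides with the Lebesgue integral $\iint e^{-iy\cdot\eta} A^{l'}(D_\eta,y) A^l(D_y,\eta) a(y,\eta)\, dy\dq\eta$. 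This establishes the first equality.

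For the second equality I would perform only $l_2 \leq l$ applications of IBP in $y$ and $l_1 \leq N$ in $\eta$; the transformed amplitude then lies in $\mathscr{A}^{m-l_2,\, N-l_1}_{\tau-l_1,\, M-l_2}(\RnRn)$, which still satisfies the hypotheses of the theorem with parameters $l - l_2$ and $l' - l_1$ in place of $l, l'$. Hence the right-hand side is a well-defined oscillatory integral, and by the first part applied to this reduced amplitude it equals the original one. The main technical obstacle I anticipate is the sign bookkeeping in the integration by parts for the odd-order terms (where the pieces $\<{D_y}^{l-1} D_{y_j}$ with coefficients $\eta_j/\<{\eta}$ must be tracked carefully), together with verifying that every Leibniz remainder term tends to zero as $\e\to 0$; the rest follows the standard Kumano-go template.
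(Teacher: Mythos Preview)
The paper does not supply its own proof of this statement; immediately before the theorem it says ``For the proof of those results we refer to \cite[Section 2.1]{Paper3}.'' Your outline is the standard Kumano-go argument (insert $A^l(D_y,\eta)e^{-iy\cdot\eta}=e^{-iy\cdot\eta}$, integrate by parts, dominate, pass to the limit), which is exactly what that reference carries out, so there is nothing substantive to compare.

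One remark on the point you already flagged: in the odd case the operator that lands on $\chi a$ after integration by parts is $A^l(-D_y,\eta)$ rather than $A^l(D_y,\eta)$, because $(D_{y_j})^t=-D_{y_j}$ under the bilinear pairing; the first-order piece therefore changes sign. This does not affect the decay estimate $|A^{l'}A^l a|\le C\<{\eta}^{m-l}\<{y}^{\tau-l'}$ and hence the existence and the absolutely convergent integral representation go through unchanged, but if you want the displayed formula with $A^l(D_y,\eta)$ verbatim you need to arrange the identity the other way round (check that the transpose operator also fixes $e^{-iy\cdot\eta}$, or equivalently flip the sign convention in the odd-order piece). This is pure bookkeeping and does not change the strategy.
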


\begin{thm}
  Let $m, \tau \in \R$, $m_i, \tau_i \in \R$ for $i\in \{1,2\}$ and $N \in \N_0 \cup \{ \infty \}$ such that there is a $l' \in \N$ with  $N\geq l'>n + \tau$. Moreover let $\alpha, \beta \in\Non$ with $|\alpha| \leq \tilde{M}$, where $\tilde{M}:= \max \{ \hat{m} \in \N_0: N-\hat{m} >n+\tau \}$ and $l \in \N$ with $l>m+n$. Considering $a \in C^0(\Rn_y \times \Rn_{y'} \times \Rn_{\eta} \times \Rn_{\xi})$ with 
  \begin{itemize}
    \item $\left| A^{l'}(D_{\eta}, y) A^l(D_y, \eta) a(y, y', \eta, \xi) \right| \leq C_{l, l'} \<{y}^{\tau-l'} \<{\eta}^{m-l} \<{y'}^{\tau_1} \<{\xi}^{m_1}$,
    \item $\left| A^{l'}(D_{\eta}, y) A^l(D_y, \eta) \pa{\alpha} \p_{y'}^{\beta} a(y, y', \eta, \xi) \right| \leq C_{l, l', \alpha, \beta} \<{y}^{\tau-l'} \<{\eta}^{m-l} \<{y'}^{\tau_2} \<{\xi}^{m_2}$ 
  \end{itemize}
  for all $y,y', \eta, \xi \in \Rn$ we have for all  $y', \xi \in \Rn$:
  \begin{align*}
     \pa{\alpha} \p_{y'}^{\beta} \osint e^{-iy \cdot \eta} a(y, y', \eta, \xi) dy \dq \eta 
      = \osint e^{-iy \cdot \eta}  \pa{\alpha} \p_{y'}^{\beta} a(y, y', \eta, \xi) dy \dq \eta.
  \end{align*}
\end{thm}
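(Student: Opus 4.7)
The plan is to reduce the oscillatory integral to an absolutely convergent one by the preceding theorem, differentiate under the integral sign by Lebesgue's theorem, and then convert back. The crucial observation is that $A^{l'}(D_\eta, y) A^l(D_y, \eta)$ acts only in the variables $(y, \eta)$, while $\pa{\alpha'} \p_{y'}^{\beta'}$ acts only in $(\xi, y')$, and the factor $e^{-iy \cdot \eta}$ is independent of $\xi$ and $y'$. Consequently these two operators commute on the integrand, i.e.\ for every $\alpha' \leq \alpha$, $\beta' \leq \beta$,
\begin{align*}
  \pa{\alpha'} \p_{y'}^{\beta'} \bigl[ e^{-iy\cdot\eta} A^{l'}(D_\eta, y) A^l(D_y, \eta) a \bigr] = e^{-iy\cdot\eta} A^{l'}(D_\eta, y) A^l(D_y, \eta) \pa{\alpha'} \p_{y'}^{\beta'} a.
\end{align*}

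Next I would fix $y', \xi \in \Rn$ and apply the preceding theorem to obtain
\begin{align*}
  \osint e^{-iy\cdot\eta} a(y, y', \eta, \xi) \, dy\,\dq\eta = \iint e^{-iy\cdot\eta} A^{l'}(D_\eta, y) A^l(D_y, \eta) a(y, y', \eta, \xi) \, dy\,\dq\eta,
\end{align*}
the right-hand side being absolutely convergent thanks to the first hypothesis together with $l > n + m$ and $l' > n + \tau$, which make $\<{y}^{\tau-l'} \<{\eta}^{m-l}$ integrable on $\Rn \times \Rn$. The analogous identity with $a$ replaced by $\pa{\alpha} \p_{y'}^\beta a$ holds by the second hypothesis. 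It then suffices to show that one may interchange $\pa{\alpha} \p_{y'}^\beta$ with the absolute integral on the right. I would do this by induction on $|\alpha| + |\beta|$: at each step a single extra partial derivative is applied, and the commutation identity above combined with the second hypothesis provides the integrable dominant $C \<{y}^{\tau - l'} \<{\eta}^{m-l}$ (locally uniform in $(y', \xi)$), so that a single application of Lebesgue's dominated convergence theorem justifies the interchange.

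Combining these steps yields
\begin{align*}
  \pa{\alpha} \p_{y'}^\beta \osint e^{-iy\cdot\eta} a \, dy\,\dq\eta = \iint e^{-iy\cdot\eta} A^{l'}(D_\eta, y) A^l(D_y, \eta) \pa{\alpha} \p_{y'}^\beta a \, dy\,\dq\eta,
\end{align*}
and one further application of the preceding theorem identifies the right-hand side with $\osint e^{-iy\cdot\eta} \pa{\alpha} \p_{y'}^\beta a \, dy\,\dq\eta$. The main obstacle I anticipate is the bookkeeping for intermediate derivatives in the induction: the stated hypotheses bound only the top-order derivatives, so one has to argue (from the regularity of $a$ or from the hypothesis applied at smaller orders) that analogous integrable dominants exist for all $\alpha' \leq \alpha$, $\beta' \leq \beta$. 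A cleaner alternative is to work from the outset with the $\e$-regularized integral $\iint \chi(\e y, \e \eta) e^{-iy \cdot \eta} a \, dy \, \dq \eta$, perform the $A^{l'} A^l$ integration by parts uniformly in $\e$, differentiate freely while the Schwartz cutoff is present, and pass to the limit $\e \to 0$ using the uniform majorant.
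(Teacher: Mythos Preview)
The paper does not include its own proof of this theorem; the whole subsection on oscillatory integrals is quoted from \cite[Section~2.1]{Paper3}, and this result in particular is stated without argument. Your strategy --- convert the oscillatory integral to an absolutely convergent one via Theorem~\ref{thm:propertiesOsciInt}, use that $A^{l'}(D_\eta,y)A^l(D_y,\eta)$ acts only in $(y,\eta)$ and hence commutes with $\pa{\alpha}\p_{y'}^\beta$, differentiate under the absolute integral by dominated convergence, and convert back --- is precisely the standard argument and is correct.

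The caveat you raise about intermediate derivatives is the right thing to flag. As the theorem is literally stated, the second bullet only controls the top-order derivative $\pa{\alpha}\p_{y'}^\beta a$, so a step-by-step induction on $|\alpha|+|\beta|$ formally requires the analogous bound for every $\alpha'\leq\alpha$, $\beta'\leq\beta$. In every application in this paper (and in \cite{Paper3}) the bounds come from symbol-class estimates and therefore automatically hold at all lower orders, so this is harmless in practice; but strictly speaking one either reads the hypothesis as holding for all $\alpha'\leq\alpha$, $\beta'\leq\beta$, or runs your second route with the $\chi(\e y,\e\eta)$-cutoff and passes to the limit one derivative at a time using the uniform-in-$\e$ majorant $C\<{y}^{\tau-l'}\<{\eta}^{m-l}$. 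Either fix is routine and requires no new idea.
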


\begin{kor}
  Let $m, \tau \in \R$ and $N \in \N_0 \cup \{ \infty \}$ such that there is an $l' \in \N$ with  $N\geq l'>n + \tau$. Moreover let $l \in \N$ with $l>n+m$. Additionally let $a_j, a \in C^0(\RnRn)$, $j \in \N_0$ such that for all $\alpha, \beta \in\Non$ with $|\alpha| \leq N$ and $|\beta| \leq l$ the derivatives $\p_{\eta}^{\alpha} \p_y^{\beta} a_j, \p_{\eta}^{\alpha} \p_y^{\beta} a$ exist in the classical sense and 
  \begin{itemize}
    \item $|\p_{\eta}^{\alpha} \p_y^{\beta} a_j(y, \eta)| \leq C_{\alpha, \beta} \<{\eta}^{m} \<{y}^{\tau}$ for all $\eta, y \in \Rn$, $j \in \N_0$,
    \item $|\p_{\eta}^{\alpha} \p_y^{\beta} a(y, \eta)| \leq C_{\alpha, \beta} \<{\eta}^{m} \<{y}^{\tau}$ for all $\eta, y \in \Rn$,
    \item $\p_{\eta}^{\alpha} \p_y^{\beta} a_j(y, \eta) \xrightarrow{j \rightarrow \infty } \p_{\eta}^{\alpha} \p_y^{\beta} a(y, \eta)$ for all $\eta, y \in \Rn$.
  \end{itemize}
  Then 
  \begin{align*}
    \lim_{j \rightarrow \infty} \osint e^{-iy \cdot \eta}  a_j(y, \eta) dy \dq \eta 
    =  \osint e^{-iy \cdot \eta}  a(y, \eta) dy \dq \eta .
  \end{align*}
\end{kor}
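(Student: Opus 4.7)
The strategy is to convert both oscillatory integrals into absolutely convergent Lebesgue integrals by invoking Theorem~\ref{thm:propertiesOsciInt}, and then to apply Lebesgue's dominated convergence theorem. The entire hypothesis package is designed exactly so that the $a_j$ sit in a fixed space of amplitudes $\mathscr{A}^{m,N}_{\tau,l}(\RnRn)$ with seminorms uniform in $j$, while simultaneously converging pointwise together with all needed derivatives.

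First I would pick $l' \in \N$ with $N\geq l' > n+\tau$ and $l \in \N$ with $l>n+m$, as provided by the hypotheses. The uniform bounds on $\p_\eta^\alpha\p_y^\beta a_j$ for $|\alpha|\leq N$, $|\beta|\leq l$ show that each $a_j$ (and $a$) belongs to $\mathscr{A}^{m,N}_{\tau,l}(\RnRn)$ with amplitude constants $C_{\alpha,\beta}$ independent of $j$. Theorem~\ref{thm:propertiesOsciInt} then converts both oscillatory integrals into ordinary Lebesgue integrals:
\begin{align*}
\osint e^{-iy\cdot\eta}a_j(y,\eta)\,dy\,\dq\eta
&= \iint e^{-iy\cdot\eta}\, A^{l'}(D_\eta, y)\, A^l(D_y, \eta)\, a_j(y,\eta)\,dy\,\dq\eta,
\end{align*}
and analogously for $a$.

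Next I would produce an integrable dominant. Unwinding the definitions, $A^{l'}(D_\eta,y)A^l(D_y,\eta)$ is a finite linear combination of operators of the form $\p_\eta^{\alpha}\p_y^{\beta}$ with $|\alpha|\leq l'$, $|\beta|\leq l$, multiplied by smooth factors such as $\<{y}^{-l'-k}$, $y_j/\<{y}$, $\<{\eta}^{-l-k}$, and $\eta_j/\<{\eta}$ (all bounded in the appropriate variable). Combining the Leibniz rule with the uniform derivative bounds $|\p_\eta^\alpha \p_y^\beta a_j(y,\eta)|\leq C_{\alpha,\beta}\<{\eta}^m\<{y}^\tau$ gives
\begin{align*}
\bigl|A^{l'}(D_\eta, y)\, A^l(D_y, \eta)\, a_j(y,\eta)\bigr|
&\leq C\,\<{y}^{\tau-l'}\<{\eta}^{m-l},
\end{align*}
uniformly in $j$, and the analogous bound for $a$. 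Since $l'>n+\tau$ and $l>n+m$, the right-hand side lies in $L^1(\RnRn)$. This estimate is entirely parallel to arguments already used in the proof of Theorem~\ref{thm:propertiesOsciInt} in \cite[Section 2.1]{Paper3}.

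Finally, the pointwise convergence $\p_\eta^\alpha\p_y^\beta a_j(y,\eta)\to \p_\eta^\alpha\p_y^\beta a(y,\eta)$ for all $(y,\eta)$ and all relevant $(\alpha,\beta)$ passes, by linearity, to pointwise convergence of the integrand $A^{l'}(D_\eta,y)A^l(D_y,\eta)a_j(y,\eta)\to A^{l'}(D_\eta,y)A^l(D_y,\eta)a(y,\eta)$, and multiplication by the unimodular factor $e^{-iy\cdot\eta}$ preserves this. Dominated convergence applied to the Lebesgue-integral representation then yields the claim. The only real work is in the bookkeeping for the dominant estimate; once that is carried out, the result is immediate.
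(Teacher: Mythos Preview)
Your proposal is correct and is precisely the standard argument: convert the oscillatory integrals via Theorem~\ref{thm:propertiesOsciInt} into absolutely convergent ones, obtain a uniform $L^1$ majorant $C\<{y}^{\tau-l'}\<{\eta}^{m-l}$ from the $j$-independent derivative bounds, and apply dominated convergence using the pointwise convergence of the derivatives. The paper itself does not give a proof here but refers to \cite[Section~2.1]{Paper3}, where exactly this route is taken; your write-up matches that approach.
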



\section{Pseudodifferential Operators and their Properties}\label{section:PDO}

The goal of this section is to discuss all properties of pseudodifferential operators needed later on. In particular we prove for every $a \in C^{\tilde{m},\tau} S^{m}_{\rho, \delta}(\RnRn)$ with $\delta=0$ the existence of a sequence of smooth symbols $(a_{\e})_{0< \e \leq 1}$, which converge to $a$. Additionally in case $\delta \neq 0$ we show the same result for non-smooth symbols of a certain subclass of $C^{\tilde{m},\tau} S^{m}_{\rho, \delta}(\RnRn)$. 
	This sequence enables us to use the spectral invariance result for smooth pseudodifferential operators in order to prove that one for non-smooth pseudodifferential operators. 
The sequence $(a_{\e})_{0< \e \leq 1}$, fulfilling those properties, is defined in the next remark:

\begin{bem}\label{bem:p_eIstGlattesSymbol}
    Let $0 <\tau <1$, $\tilde{m} \in \N_0$, $m \in \R$ and $0 \leq \rho,\delta \leq 1$. Additionally let $(\varphi_{\e})_{\e>0}$ be a positive Dirac-family and $a \in C^{\tilde{m},\tau} S^{m}_{\rho, \delta}(\RnRn)$. For all $\e \in (0,1]$ we define $a_{\e}: \RnRn \rightarrow \C$ by
    \begin{align*}
      a_{\e}(x,\xi):= \left(a(., \xi) \ast \varphi_{\e} \right)(x):=\intr a(y,\xi) \varphi_{\e}(x-y) dy \qquad \text{ for all } x, \xi \in \Rn.
    \end{align*}
    Then $a_{\e} \in S^{m}_{\rho, \delta}(\RnRn)$ for all $\e \in (0,1]$.
\end{bem}

The previous remark can be proved 
by means of the properties of the convolution and the Dirac-family. The second ingredient for reaching the aim of this section is the next boundedness result:

\begin{thm}\label{thm:BoundednessResultNonSmooth}
  Let $m \in \R$, $0 \leq \delta \leq \rho \leq 1$ with $\rho >0$, $1\leq p \leq \infty$, $l\in \N_0$, $N\in \N$ with $N > \max\left\{ n/2, n/p \right\}$, $\tilde{m} \in \N_0$ and $0<\tau <1$. Additionally let $\tilde{m}+\tau > \frac{1-\rho}{1-\delta}\cdot \frac{n}{2}$ if $\rho <1$. Denoting $k_p:=(1-\rho) n \left| 1/2 - 1/p \right|$ and let $(1-\rho)n/p - (1-\delta)(\tilde{m}+ \tau) < s< \tilde{m}+\tau$ we get for all $a \in C^{\tilde{m}, \tau} S^{m-k_p}_{\rho, \delta} (\RnRn,N;  \mathscr{L}(\C^l))$  the boundedness of 
$$a(x,D_x): \left( H^{s+m}_p(\Rn) \right)^l \rightarrow \left( H^s_p(\Rn)\right)^l.$$
Moreover we get for some $k \in \N_0$
 with $k<N$
 and some $C_{s,l}>0$, independent of $a \in C^{\tilde{m}, \tau} S^{m-k_p}_{\rho, \delta} (\RnRn, N; \mathscr{L}(\C^l))$, the following estimate:
  \begin{align}\label{eq:Continuity}
    \left\| a(x,D_x) f \right\|_{\left( H^s_p(\Rn)\right)^l} \leq C_{s,l} |a|^{(m-k_p)}_{k, C^{\tilde{m}, \tau} S^{m-k_p}_{\rho, \delta} (\RnRn,N; \mathscr{L}(\C^l))} \| f\|_{\left(H_p^{s+m}\right)^l}
  \end{align}
  for all $f \in  \left(H_p^{s+m}(\Rn)\right)^l, a \in C^{\tilde{m}, \tau} S^{m-k_p}_{\rho, \delta} (\RnRn,N; \mathscr{L}(\C^l))$.
\end{thm}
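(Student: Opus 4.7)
The plan is to reduce to the scalar case $l = 1$ and then invoke the continuity theorems of Marschall \cite[Theorem 2.7, Theorem 4.2]{Marschall}, with some additional bookkeeping to make the dependence on $a$ linear in the asserted Fr\'echet semi-norm.

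For the scalar case, in the range $\rho = 1$ (so $k_p = 0$) the boundedness of $a(x,D_x)\colon H^{s+m}_p \to H^s_p$ for $-(1-\delta)(\tilde{m}+\tau) < s < \tilde{m}+\tau$ is \cite[Theorem 2.7]{Marschall}. For $0 < \rho < 1$, the loss of $k_p = (1-\rho)n|1/2 - 1/p|$ derivatives matches the Fefferman--Stein-type continuity statement \cite[Theorem 4.2]{Marschall}; the hypothesis $\tilde{m}+\tau > \frac{1-\rho}{1-\delta}\cdot\frac{n}{2}$ and the admissible range of $s$ are exactly those required by Marschall's theorem.

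To pass to the matrix-valued case, I would argue component-wise. Writing $a = (a_{i,j})_{i,j=1}^l$ and $f = (f_j)_{j=1}^l$, one has $(a(x,D_x) f)_i = \sum_{j=1}^l a_{i,j}(x,D_x) f_j$, so combining the scalar estimate with Remark \ref{bem:MatrixwertigerHölderRaum}---which gives the equivalence of $\|\cdot\|_{(H^s_p)^l}$ with the maximum of the component norms and similarly for the symbol class---yields, with a constant $C_{s,l}$ absorbing the factor $l^2$,
\begin{equation*}
\|a(x,D_x) f\|_{(H^s_p)^l} \leq C_{s,l}\, |a|^{(m-k_p)}_{k,\, C^{\tilde{m},\tau} S^{m-k_p}_{\rho,\delta}(\RnRn, N;\, \mathscr{L}(\C^l))}\, \|f\|_{(H^{s+m}_p)^l}.
\end{equation*}

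The main technical point is the explicit linear dependence on a Fr\'echet semi-norm of $a$ at some specific order $k < N$. Marschall's proofs decompose the symbol via a dyadic Littlewood--Paley partition in $\xi$ and control each piece through finitely many $\xi$-derivatives $\pa{\alpha} a$ measured in an H\"older norm in $x$; the value of $k$ is the largest $|\alpha|$ required in those estimates and is dictated by the Bernstein-type inequalities used, with the assumption $N > \max\{n/2, n/p\}$ ensuring $k < N$. Since $a \mapsto a(x,D_x)$ is linear and every estimate in Marschall's argument is linear in a Fr\'echet semi-norm of $a$, the desired bound (\ref{eq:Continuity}) follows by tracking the constants through Marschall's proof. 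I expect this bookkeeping---rather than any new analytic input---to be the main obstacle.
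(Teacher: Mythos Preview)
Your reduction to the scalar case via components and your appeal to Marschall's theorems for the qualitative boundedness are exactly what the paper does. The difference lies in how you obtain the quantitative estimate \eqref{eq:Continuity}. You propose to open up Marschall's proof and track the dependence on $a$ through every step; this is correct in principle and would even pin down an explicit value of $k$, but it is the laborious route you yourself flag as the main obstacle.

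The paper sidesteps this bookkeeping entirely with a soft argument via the Banach--Steinhaus theorem on the Fr\'echet (in fact Banach, since $N<\infty$) space of symbols. For fixed $f\in H^{s+m}_p$ and $g\in (H^s_p)'$ with $\|f\|,\|g\|\le 1$, the linear functional
\[
  a \longmapsto \langle a(x,D_x)f,\,g\rangle_{H^s_p,(H^s_p)'}
\]
on $C^{\tilde m,\tau}S^{m-k_p}_{\rho,\delta}(\RnRn,N)$ is continuous because $a(x,D_x)$ is bounded by Marschall. The family of all such functionals, indexed by $(f,g)$ in the product of unit balls, is pointwise bounded by $\|a(x,D_x)\|_{\mathscr{L}(H^{s+m}_p,H^s_p)}$, hence equicontinuous by Banach--Steinhaus. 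Equicontinuity on a Fr\'echet space means there is a single seminorm $|\cdot|^{(m-k_p)}_k$ and a constant $C$ with $|\langle a(x,D_x)f,g\rangle|\le C\,|a|^{(m-k_p)}_k$ uniformly in $(f,g)$; taking suprema over $f$ and $g$ gives \eqref{eq:Continuity}. This costs you nothing beyond the boundedness already established and never requires looking inside Marschall's argument. The price is that $k$ is produced abstractly rather than explicitly, but the statement only asserts existence.
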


We already proved the previous boundedness result for $p \neq 1$ and $l=1$ in   \cite[Theorem 3.7]{Paper1}.  For $p=1$ and $l=1$ we can show the claim in nearly the same way by means of the Theorem of Banach Steinhaus, see e.g. \cite[Theorem 2.5]{Rudin}. We just have to extend the family of linear and continuous functions on which Banach Steinhaus should be applied. For convenience of the reader we nevertheless prove this case, now:


\begin{proof}
  Case $l=1$ is already proved in \cite[Theorem 3.7]{Paper1} for $p\neq1$. Now let $p=1$ and $l=1$. 
Due to \cite[Theorem 4.2]{Marschall} it just remains to proof \eqref{eq:Continuity}.

	 As already mentioned, this can be done similarly as for $p \neq 1$. Hence we first define for 
 $f\in H^{s+m}_1(\Rn)$ and $g \in \left(H^s_1(\Rn)\right)'$ with $\|f\|_{H^{s+m}_1} \leq 1$ and $\|g\|_{\left(H^s_1\right)'} \leq 1$ the operator $\op_{f,g} :  C^{\tilde{m}, \tau} S^{m-k_1}_{\rho, \delta} (\RnRn,N;  \mathscr{L}(\C^l)) \rightarrow \C$ by 
$$\op_{f,g}(a):= \skh{a(x, D_x)f}{g}{H^s_1, \left(H^s_1\right)'}.$$
 Using the boundedness of  
$a(x,D_x):  H^{s+m}_1(\Rn) \rightarrow  H^s_1(\Rn)$ for each symbol $a  \in  C^{\tilde{m}, \tau} S^{m-k_1}_{\rho, \delta} (\RnRn,N;  \mathscr{L}(\C^l))$  we get the existence of a constant $C$, independent of  $f \in H^{s+m}_1(\Rn), g \in  \left(H^s_1(\Rn)\right)' \text{ with } \|f\|_{H^{s+m}_1} \leq 1 \text{ and } \|g\|_{ \left(H^s_1\right)'} \leq 1$, such that 
  \begin{align*}
    |\skh{a(x, D_x)f}{g}{H^s_1, \left(H^s_1\right)'}| 
    &\leq \left\| a(x, D_x)f \right\|_{H^s_1} \left\| g \right\|_{\left(H^s_1\right)'}
    &\leq C \left\| a(x, D_x)\right\|_{\mathscr{L}(H^{s+m}_1; H^s_1) }.
  \end{align*}
  Consequently 
an application of  the theorem of Banach-Steinhaus, cf.\;e.g.\;\cite{Rudin}, Theorem 2.5 provides that
  \begin{align*}
    \left\{ \op_{f,g} : f \in H^{s+m}_1(\Rn), g \in  \left(H^s_1(\Rn)\right)' \text{ with } \|f\|_{H^{s+m}_1} \leq 1 \text{ and } \|g\|_{ \left(H^s_1\right)'} \leq 1 \right\}
  \end{align*}
  is equicontinuous. Therefore there is  a $k \in \N$ with $k \leq N$ and a $C>0$ such that
  \begin{align*}
     |\op_{f,g}(a)| \leq C | a |^{(m)}_{k} \quad \text{for all }  &a \in C^{\tilde{m}, \tau} S^{m-k_1}_{\rho, \delta} (\RnRn,N;  \mathscr{L}(\C^l)), f \in H^{s+m}_1(\Rn),\\
 & g \in  \left(H^s_1(\Rn)\right)' \text{ with } \|f\|_{H^{s+m}_1} \leq 1 \text{ and } \|g\|_{ \left(H^s_1\right)'} \leq 1.
  \end{align*}
 This implies the claim:
  \begin{align*}
    \| a(x, D_x) \|_{\mathscr{L}(H^{s+m}_1;H^s_1)} 
    &= \sup_{\|f\|_{H^{s+m}_1} \leq 1} \| a(x, D_x)f \|_{H^s_1}
    = \sup_{\|f\|_{H^{s+m}_1} \leq 1} \sup_{\|g\|_{ \left(H^s_1\right)'} \leq 1} |\op_{f,g}(a)| \\
    &\leq C |a|^{(m)}_{k} \qquad \text{for all } a \in  C^{\tilde{m}, \tau} S^{m-k_1}_{\rho, \delta} (\RnRn,N;  \mathscr{L}(\C^l)).
  \end{align*}

 Considering the components of $a(x,D_x)$ we obtain the general case $l \in \N$ and $1 \leq p < \infty$ by means of case $l=1$.
\end{proof}

With these results at hand, we now are able to show

\begin{lemma}\label{DichtheitGlatteSymbolklasseInNonSmoothOne}
    Let $a = (a^{ij})_{i,j=1}^l \in C^{\tilde{m},\tau} S^{m}_{\rho, 0}(\RnRn;\mathscr{L}(\C^l))$ with $0 <\tau<1$, $\tilde{m} \in \N_0$, $m \in \R$. We set for each $\e \in (0, 1]$ the function $a_{\e}:= \left( a^{ij}_{\e} \right)_{i,j=1}^l \in S^{m}_{\rho, 0}(\RnRn; \mathscr{L}(\C^l))$, where $a^{ij}_{\e}$ are defined as in Remark \ref{bem:p_eIstGlattesSymbol} for each $i,j \in \{ 1, \ldots, l \}$. Then
    for all $0<t<\tau$
    \begin{align*}
      a_{\e} \xrightarrow{\e \rightarrow 0} a \qquad \text{ in }  C^{\tilde{m},t} S^{m}_{\rho, 0}(\RnRn; \mathscr{L}(\C^l)).
    \end{align*}

\end{lemma}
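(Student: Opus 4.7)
By Remark \ref{bem:MatrixwertigerHölderRaum} we may pass to the scalar case componentwise, so it suffices to treat $l=1$. Since $\delta=0$, the defining seminorms of $C^{\tilde{m},t}S^{m}_{\rho,0}(\RnRn)$ reduce to
\begin{align*}
 |a_{\e}-a|^{(m)}_{k,\,C^{\tilde{m},t}S^{m}_{\rho,0}}
  = \max_{|\alpha|\leq k}\,\sup_{\xi\in\Rn}\,\langle\xi\rangle^{-m+\rho|\alpha|}\,\|\pa{\alpha}(a_{\e}-a)(\cdot,\xi)\|_{C^{\tilde{m},t}(\Rn)}.
\end{align*}
Since the convolution in $x$ commutes with the $\xi$-derivative, $\pa{\alpha}a_{\e}(\cdot,\xi)=(\pa{\alpha}a(\cdot,\xi))\ast\varphi_{\e}$, so the task reduces to showing, for every fixed $\alpha\in\Non$, that
\begin{align*}
 \sup_{\xi\in\Rn}\,\langle\xi\rangle^{-m+\rho|\alpha|}\,\bigl\|\, b(\cdot,\xi)\ast\varphi_{\e}-b(\cdot,\xi)\bigr\|_{C^{\tilde{m},t}(\Rn)} \xrightarrow{\e\to 0} 0,
\end{align*}
where $b(x,\xi):=\pa{\alpha}a(x,\xi)$. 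Property iv) of the symbol class (with $\delta=0$) gives the uniform bound $\|b(\cdot,\xi)\|_{C^{\tilde{m},\tau}(\Rn)}\leq C_{\alpha}\langle\xi\rangle^{m-\rho|\alpha|}$.

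The key input is Lemma \ref{lemma:AbschatzungHolderraume}: because $0<t<\tau<1$,
\begin{align*}
 \|b(\cdot-y,\xi)-b(\cdot,\xi)\|_{C^{\tilde{m},t}(\Rn)}\leq C\,|y|^{\tau-t}\,\|b(\cdot,\xi)\|_{C^{\tilde{m},\tau}(\Rn)}
\end{align*}
uniformly in $\xi\in\Rn$ and $y\in\Rn$. Writing $b(\cdot,\xi)\ast\varphi_{\e}-b(\cdot,\xi)=\int(b(\cdot-y,\xi)-b(\cdot,\xi))\varphi_{\e}(y)\,dy$ and applying the Minkowski-type estimate for the Banach-valued integral, i.e. pulling the $C^{\tilde{m},t}$-norm under the integral sign, yields
\begin{align*}
 \|b(\cdot,\xi)\ast\varphi_{\e}-b(\cdot,\xi)\|_{C^{\tilde{m},t}(\Rn)}
 \leq C\,\|b(\cdot,\xi)\|_{C^{\tilde{m},\tau}(\Rn)}\int_{\Rn}|y|^{\tau-t}\varphi_{\e}(y)\,dy.
\end{align*}
A standard rescaling $\varphi_{\e}(y)=\e^{-n}\varphi_{1}(y/\e)$ gives $\int|y|^{\tau-t}\varphi_{\e}(y)\,dy=\e^{\tau-t}\int|z|^{\tau-t}\varphi_{1}(z)\,dz\to 0$ as $\e\to 0$ (the last integral is finite since $\varphi_{1}$ has integrable decay, being a Dirac family).

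Combining these, for every $|\alpha|\leq k$,
\begin{align*}
 \langle\xi\rangle^{-m+\rho|\alpha|}\,\|(\pa{\alpha}a(\cdot,\xi))\ast\varphi_{\e}-\pa{\alpha}a(\cdot,\xi)\|_{C^{\tilde{m},t}(\Rn)}
 \leq C_{\alpha}\,\e^{\tau-t}\xrightarrow{\e\to 0} 0
\end{align*}
uniformly in $\xi\in\Rn$, which gives the claim. I do not expect any serious obstacle here: the argument is a direct combination of Lemma \ref{lemma:AbschatzungHolderraume} with the standard Dirac-family estimate. The only mild subtlety is justifying that the $C^{\tilde{m},t}$-norm may be pulled under the convolution integral, but this follows from the fact that the $C^{\tilde{m},t}$-norm is a continuous seminorm that is compatible with Bochner integration.
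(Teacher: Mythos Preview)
Your argument is correct and follows the same skeleton as the paper's proof: reduce to $l=1$ via Remark~\ref{bem:MatrixwertigerHölderRaum}, invoke Lemma~\ref{lemma:AbschatzungHolderraume} to control $\|b(\cdot-y,\xi)-b(\cdot,\xi)\|_{C^{\tilde m,t}}$ by $C|y|^{\tau-t}\|b(\cdot,\xi)\|_{C^{\tilde m,\tau}}$, and then push the norm under the convolution integral.

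The one place where you diverge is the final Dirac-family step. You assume the specific rescaling form $\varphi_\e(y)=\e^{-n}\varphi_1(y/\e)$ together with $\int|z|^{\tau-t}\varphi_1(z)\,dz<\infty$; neither of these is part of the abstract definition of a positive Dirac family used in Remark~\ref{bem:p_eIstGlattesSymbol} (and the second does not follow merely from $\varphi_1\in L^1$). The paper instead does a near/far splitting: for $|y|<\tilde\delta$ it uses the smallness from Lemma~\ref{lemma:AbschatzungHolderraume}, and for $|y|\ge\tilde\delta$ it uses the trivial bound $\|b(\cdot-y,\xi)-b(\cdot,\xi)\|_{C^{\tilde m,t}}\le 2\|b(\cdot,\xi)\|_{C^{\tilde m,t}}$ together with the mass-concentration property $\int_{|y|\ge\tilde\delta}\varphi_\e(y)\,dy\to 0$. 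This works for an arbitrary positive Dirac family without any moment hypothesis. Your version is cleaner and gives an explicit rate $\e^{\tau-t}$, but as written it needs the extra structural assumption on $(\varphi_\e)$; either state that assumption explicitly, or replace the last step by the paper's splitting argument.
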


\begin{proof}
  First let us assume $l =1$. Let $0<t<\tau$ and $\alpha \in \Non$ be arbitrary. By means of Lemma \ref{lemma:AbschatzungHolderraume} we obtain
  \begin{align*}
    \|\pa{\alpha} a(x-y, \xi) - \pa{\alpha} a(x, \xi) \|_{C^{\tilde{m}, t}(\Rn_x) } \<{\xi}^{-m +\rho |\alpha|} 
    \leq C_{\alpha} |a|^{(m)}_{|\alpha|, C^{\tilde{m},\tau} S^m_{\rho, 0}} |y|^{\tau-t}
    \rightarrow 0
  \end{align*}
  uniformly in $\xi$ for $|y|\rightarrow 0$.
  Let $k \in \N_0$ be arbitrary now. In order to show $|a_{\e}-a|^{(m)}_{k, C^{\tilde{m},t} S^m_{\rho, 0}} \rightarrow 0$ for $\e \rightarrow 0$ we choose an arbitrary $\tilde{\e}>0$. On account of the previous convergence there is a $\tilde{\delta}>0$ such that for all $|\alpha| \leq k$ we have
  \begin{align}\label{eq1}
    \|\pa{\alpha} a(x-y, \xi) - \pa{\alpha} a(x, \xi) \|_{C^{\tilde{m}, t}(\Rn_x) } \<{\xi}^{-m +\rho |\alpha|}  \leq \frac{\tilde{\e}}{2} \qquad \text{ for all } |y|< \tilde{\delta}, \xi \in \Rn. 
  \end{align}
  The properties of a positive Dirac-family provides the existence of a $\nu > 0 $ such that 
  \begin{align}\label{eq2}
    \left| \int_{|y| \geq \tilde{\delta} } \varphi_{\e} (y) dy \right| < \frac{\tilde{\e}}{4A} \qquad \text{ for all } \e < \nu, 
  \end{align}
  where $A:= \max \left\{ 1, \max_{|\alpha|\leq k} \sup_{\xi \in \Rn} \left( \|\pa{\alpha} a(., \xi)\|_{C^{\tilde{m}, t}} \<{\xi}^{-m+\rho |\alpha|} \right) \right\}$.
   Because of the properties of the convolution and of a positive Dirac-family we get
  \begin{align}\label{eq0}
    & |a_{\e}-a|^{(m)}_{k, C^{\tilde{m},t} S^m_{\rho, 0} }
    \leq \max_{|\alpha| \leq k} \sup_{\xi \in \Rn} \left\{ \|\pa{\alpha} \left( a_{\e}(x,\xi) - a(x,\xi) \right) \|_{C^{\tilde{m},t}(\Rn_x)} \<{\xi}^{-m + \rho |\alpha|} \right\} \nonumber \\ 
    &\leq \max_{|\alpha| \leq k} \sup_{\xi \in \Rn} \left\{ \max_{|\beta| \leq \tilde{m}} \int \left\| \pa{\alpha} D_x^{\beta} a(x-y,\xi) - \pa{\alpha} D_x^{\beta} a(x,\xi) \right\|_{C^{0,t}(\Rn_x)} \varphi_{\e}(y) dy  \<{\xi}^{-m + \rho |\alpha|} \right\} \nonumber\\ 
    & \leq \max_{|\alpha| \leq k} \sup_{\xi \in \Rn} \left\{ \int \left\| \pa{\alpha}  a(x-y,\xi) - \pa{\alpha}  a(x,\xi) \right\|_{C^{\tilde{m},t}(\Rn_x)} \varphi_{\e}(y) dy  \<{\xi}^{-m + \rho |\alpha|} \right\}.
  \end{align}
  Splitting the integral of the previous inequality into two with respect to the sets $\{|y| < \tilde{\delta} \}$ and $\{|y| \geq \tilde{\delta} \}$ respectively we obtain the claim in case $l=1$ by using the inequalities (\ref{eq1}) - (\ref{eq2}):
  \begin{align*}
    |a_{\e}-a|^{(m)}_{k, C^{\tilde{m},t} S^m_{\rho, 0} } 
    \leq \tilde{\e}.
  \end{align*}
  The general case can be verified by using Remark \ref{bem:MatrixwertigerHölderRaum} and case $l=1$ for each entry $a^{ij} \in C^{\tilde{m},\tau} S^{m}_{\rho, 0}(\RnRn)$ of $a = (a^{ij})_{i,j=1}^l \in C^{\tilde{m},\tau} S^{m}_{\rho, 0}(\RnRn; \mathscr{L}(\C^l))$.
\end{proof}

Unfortunately we cannot modify the previous proof for general symbols of the symbol-class $C^{\tilde{m},\tau} S^m_{\rho, \delta}(\RnRn; \mathscr{L}(\C^l))$, $\delta \neq 0$,
since assumption $ii)$ of the definition below is needed for the proof in general. 
For the set of all those symbols we introduce the new symbol-class  $C_{unif}^{\tilde{m}, \tau} S^{m }_{\rho, \delta}(\Rn \times \Rn; \mathscr{L}(\C^l))$:

\begin{Def}
  Let $0 \leq \rho,\delta \leq 1$, $0 <\tau<1$, $\tilde{m} \in \N_0$ and $m \in \R$. Then the symbol-class $C_{unif}^{\tilde{m}, \tau} S^{m }_{\rho, \delta}(\Rn \times \Rn; \mathscr{L}(\C^l))$ is the set of all functions $a: \RnRn \rightarrow \C^{l \times l}$ such that for all $\alpha \in \Non$ we have
  \begin{itemize}
    \item[i)] $a \in C^{\tilde{m},\tau} S^{m}_{\rho, \delta}(\RnRn; \mathscr{L}(\C^l))$,
    \item[ii)] $\lim\limits_{|h| \rightarrow 0} \sup\limits_{\xi \in \Rn} \left\| \pa{\alpha} \left( a(x+h,\xi)-a(x, \xi) \right) \right\|_{C^0_b(\Rn; \mathscr{L}(\C^l)) } \<{\xi}^{-m+\rho|\alpha|} = 0$.
  \end{itemize}
\end{Def}

\begin{lemma} \label{lemma:KonvergenzFürDeltaUngleich0}
    Let $0 <\tau<1$, $\tilde{m} \in \N_0$, $m \in \R$ and $0 \leq \rho,\delta \leq 1$ with $\delta \neq 0$. For all $a = (a^{ij})_{i,j=1}^l \in C_{unif}^{\tilde{m},\tau} S^{m}_{\rho, \delta}(\RnRn; \mathscr{L}(\C^l))$ and all $\e >0$ we set $a_{\e}:= \left( a^{ij}_{\e} \right)_{i,j=1}^l \in S^{m}_{\rho, \delta}(\RnRn; \mathscr{L}(\C^l))$, where $a^{ij}_{\e}$ are defined as in Remark \ref{bem:p_eIstGlattesSymbol} for each $i,j \in \{ 1, \ldots, l \}$. Then for all $0<t<\tau$ we have 
    \begin{align*}
      a_{\e} \xrightarrow{\e \rightarrow 0} a \qquad \text{ in }  C^{\tilde{m},t} S^{m}_{\rho, \delta}(\RnRn; \mathscr{L}(\C^l)).
    \end{align*}
\end{lemma}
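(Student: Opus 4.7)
By Remark \ref{bem:MatrixwertigerHölderRaum} it suffices to treat the scalar case $l=1$. Fix $0<t<\tau$ and $k\in\N_0$. Recalling the seminorm
\begin{align*}
	|a_\e-a|^{(m)}_{k,C^{\tilde{m},t}S^m_{\rho,\delta}}=\max_{|\alpha|\leq k}\sup_{\xi\in\Rn}\Bigl\{\|\pa{\alpha}(a_\e-a)(.,\xi)\|_{C^{\tilde{m},t}(\Rn)}\<{\xi}^{-m+\rho|\alpha|-\delta(\tilde{m}+t)}+Se_\alpha(\xi)\Bigr\},
\end{align*}
with $Se_\alpha(\xi)=\|\pa{\alpha}(a_\e-a)(.,\xi)\|_{C^0_b(\Rn)}\<{\xi}^{-m+\rho|\alpha|}$, we must show that both summands tend to zero uniformly in $\xi$ and $|\alpha|\leq k$. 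The strategy is: first prove the $C^0_b$ part using the new hypothesis (ii) of $C_{unif}^{\tilde{m},\tau}S^{m}_{\rho,\delta}$, then combine it with a uniform bound of the full $C^{\tilde{m},\tau}$ norm of $a_\e-a$ by interpolation.

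For the $Se_\alpha$ term, writing
\begin{align*}
	\pa{\alpha}(a_\e-a)(x,\xi)=\int [\pa{\alpha}a(x-y,\xi)-\pa{\alpha}a(x,\xi)]\varphi_\e(y)\,dy,
\end{align*}
taking the $C^0_b$ norm in $x$ under the integral, multiplying by $\<{\xi}^{-m+\rho|\alpha|}$, and splitting the domain of integration into $\{|y|<\tilde\delta\}$ and $\{|y|\geq\tilde\delta\}$ exactly as in the proof of Lemma \ref{DichtheitGlatteSymbolklasseInNonSmoothOne}, the first part is made uniformly small in $\xi$ by condition (ii) applied to each multi-index with $|\alpha|\leq k$, while the second is absorbed by the tail $\int_{|y|\geq\tilde\delta}\varphi_\e(y)\,dy$, which tends to $0$ by the Dirac-family property. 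Hence $\sup_\xi Se_\alpha(\xi)\to 0$ uniformly in $|\alpha|\leq k$.

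For the Hölder-seminorm summand, observe first that convolution against a nonnegative $L^1$-function of unit mass does not enlarge the norm in $C^{\tilde{m},\tau}(\Rn_x)$, so
\begin{align*}
	\|\pa{\alpha}(a_\e-a)(.,\xi)\|_{C^{\tilde{m},\tau}(\Rn)}\<{\xi}^{-m+\rho|\alpha|-\delta(\tilde{m}+\tau)}\leq 2\,|a|^{(m)}_{k,C^{\tilde{m},\tau}S^m_{\rho,\delta}}
\end{align*}
uniformly in $\e\in(0,1]$ and $\xi\in\Rn$. Now set $\theta:=\frac{\tilde{m}+t}{\tilde{m}+\tau}\in(0,1)$. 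A standard interpolation inequality (derivable from Lemma \ref{lemma:InterpolationResult} together with the direct convexity estimate for the Hölder seminorm of the top-order derivative, and analogous to Lemma \ref{lemma:interpolation1} ii) but now on all of $\Rn$) yields
\begin{align*}
	\|f\|_{C^{\tilde{m},t}(\Rn)}\leq C\,\|f\|_{C^0_b(\Rn)}^{1-\theta}\|f\|_{C^{\tilde{m},\tau}(\Rn)}^{\theta}.
\end{align*}
Applied to $f=\pa{\alpha}(a_\e-a)(.,\xi)$ and combined with the identity $-m+\rho|\alpha|-\delta(\tilde{m}+t)=(1-\theta)(-m+\rho|\alpha|)+\theta(-m+\rho|\alpha|-\delta(\tilde{m}+\tau))$, this produces
\begin{align*}
	\|\pa{\alpha}(a_\e-a)(.,\xi)\|_{C^{\tilde{m},t}}\<{\xi}^{-m+\rho|\alpha|-\delta(\tilde{m}+t)}\leq C\,\bigl(Se_\alpha(\xi)\bigr)^{1-\theta}\bigl(2\,|a|^{(m)}_{k,C^{\tilde{m},\tau}S^m_{\rho,\delta}}\bigr)^{\theta}.
\end{align*}
Since the second factor on the right is bounded independently of $\e$ and $\xi$, and $\sup_\xi Se_\alpha(\xi)\to 0$ from the previous step, the left side tends to zero uniformly in $\xi$ and $|\alpha|\leq k$, yielding $|a_\e-a|^{(m)}_{k,C^{\tilde{m},t}S^m_{\rho,\delta}}\to 0$.

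The only delicate point is the interpolation inequality on all of $\Rn$, since in the preceding lemmas an analogous statement is only recorded for the complement of a ball; I would either invoke the usual real-interpolation characterization of Hölder spaces or derive it inline, by applying Lemma \ref{lemma:InterpolationResult} to control intermediate pure derivatives and then using the elementary bound $[\p^\beta f]_{C^{0,t}}\leq 2\,\|\p^\beta f\|_{C^0_b}^{1-t/\tau}\|\p^\beta f\|_{C^{0,\tau}}^{t/\tau}$ for the top-order seminorm. The matrix-valued case then follows via Remark \ref{bem:MatrixwertigerHölderRaum} by applying the scalar conclusion componentwise.
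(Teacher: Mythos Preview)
Your proof is correct and follows essentially the same approach as the paper: both control the $C^0_b$ part via hypothesis (ii) of the $C_{unif}$ class together with the Dirac-family tail, and both upgrade to the $C^{\tilde m,t}$ norm through the interpolation inequality $\|f\|_{C^{\tilde m,t}}\le C\|f\|_{C^0_b}^{1-\theta}\|f\|_{C^{\tilde m,\tau}}^{\theta}$ with $\theta=(\tilde m+t)/(\tilde m+\tau)$ and the exponent identity you wrote down. The paper cites exactly this interpolation inequality on $\Rn$ from \cite[Corollary~1.2.18]{LunardiSemigoups} and \cite[Theorem~1.3.3]{TriebelInterpol}, so your ``delicate point'' is already handled; the only cosmetic difference is that the paper interpolates on the translates $a(\cdot+h)-a(\cdot)$ and then integrates against $\varphi_\e$, whereas you first integrate (obtaining the $C^0_b$ convergence of $a_\e-a$ and the uniform $C^{\tilde m,\tau}$ bound by nonexpansiveness of convolution) and interpolate once at the end.
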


\begin{proof}
   First we prove case $l=1$. Let $0<t<\tau$ and $\alpha \in \Non$ be arbitrary.
   Due to an interpolation result, cf.\  \cite[Corollary 1.2.18]{LunardiSemigoups} and \cite[Theorem 1.3.3]{TriebelInterpol}  we get for $\theta =\frac{m+t}{m+\tau}$:
  \begin{align*}
    &\| \pa{\alpha} a(x+h,\xi) - \pa{\alpha} a(x,\xi)\|_{C^{\tilde{m},t}(\Rn_x)} \<{\xi}^{-m+\rho |\alpha|-\delta (\tilde{m}+t)} \\
    &\leq C_{\theta}  \| \pa{\alpha} a(x+h,\xi) - \pa{\alpha} a(x,\xi)\|^{1-\theta}_{C^{0}_b(\Rn_x)}  \| \pa{\alpha} a(x+h,\xi) - \pa{\alpha} a(x,\xi)\|^{\theta}_{C^{\tilde{m},\tau}(\Rn_x)} \<{\xi}^{-m+\rho |\alpha|-\delta (\tilde{m}+t)} \\
    & \leq C_{\theta} \left( |a|^{(m)}_{|\alpha|, C^{\tilde{m},\tau} S^m_{\rho, \delta}} \right)^{\theta}
      \| \pa{\alpha} a(x+h,\xi) - \pa{\alpha} a(x,\xi)\|^{1-\theta}_{C^{0}_b(\Rn_x)} \<{\xi}^{(-m+\rho |\alpha|)(1-\theta)}
    \rightarrow 0
  \end{align*}
  uniformly in $\xi$ for $|h| \rightarrow 0$.\\
  Now let $k \in \N_0$ be arbitrary.
  In order to prove $|a_{\e}-a|^{(m)}_{k, C^{\tilde{m},t} S^m_{\rho, \delta}} \rightarrow 0$ for $\e \rightarrow 0$ we choose an arbitrary $\tilde{\e}>0$. Then the previous estimate implies the existence of a $\delta_1>0$ such that for all $|\alpha| \leq k$ we have 
  \begin{align}\label{eq3}
    \| \pa{\alpha} a(x+h,\xi) - \pa{\alpha} a(x,\xi)\|_{C^{\tilde{m},t}(\Rn_x)} \<{\xi}^{-m+\rho |\alpha|-\delta (\tilde{m}+t)}  \leq \frac{ \tilde{\e} }{4} \quad \forall |h| \leq \delta_1, \xi \in \Rn.
  \end{align}
  Since $a \in C_{unif}^{\tilde{m},\tau} S^{m}_{\rho, \delta}(\RnRn)$, there is a $\delta_2>0$ such that
  \begin{align}\label{eq4}
    \sup_{\xi \in \Rn} \left\| \pa{\alpha} ( a(x+h, \xi) -  a(x, \xi) )\right\|_{C^0_b(\Rn)} \<{\xi}^{-m + \rho |\alpha|} \leq \frac{\tilde{\e}}{4} \quad  \forall  |h| \leq \delta_2, |\alpha| \leq k.
  \end{align}
  The properties of a positive Dirac-family provide the existence of a $\nu >0$ such that
  \begin{align}\label{eq5}
    \left| \int_{|y| \geq \min{ \{\delta_1, \delta_2 \} } } \varphi_{\e} (y) dy \right| < \frac{\tilde{\e}}{4A} \qquad \text{ for all } \e < \nu, 
  \end{align}
  where 
  \begin{align*}
    A &:= \max \left\{ 1, 2 \max_{|\alpha| \leq k} \sup_{\xi \in \Rn} \left( \|\pa{\alpha} a(., \xi)\|_{C^{\tilde{m}, t}} \<{\xi}^{-m+\rho |\alpha|-\delta(\tilde{m}+t)} \right), \right.  \\
    & \qquad \qquad \qquad \qquad \qquad \qquad \qquad  \left. 2 \max_{|\alpha| \leq k}\sup_{\xi \in \Rn} \left( \|\pa{\alpha} a(., \xi)\|_{C^0_b} \<{\xi}^{-m+\rho |\alpha|} \right) \right\}.
  \end{align*}
  In the same way as equality (\ref{eq0}) in the proof of Lemma \ref{DichtheitGlatteSymbolklasseInNonSmoothOne} we can show
  \begin{align*}
     &|a_{\e}-a|^{(m)}_{k, C^{\tilde{m},t} S^m_{\rho, \delta} }\\
     &\quad \leq \max_{|\alpha| \leq k} \sup_{\xi \in \Rn} \left\{ \int \left\| \pa{\alpha}  a(x-y,\xi) - \pa{\alpha}  a(x,\xi) \right\|_{C^{0}_b(\Rn_x)} \varphi_{\e}(y) dy  \<{\xi}^{-m + \rho |\alpha|} \right. \\
    &\qquad \qquad \left. +  \int \left\| \pa{\alpha}  a(x-y,\xi) - \pa{\alpha}  a(x,\xi) \right\|_{C^{\tilde{m},t}(\Rn_x)} \varphi_{\e}(y) dy  \<{\xi}^{-m + \rho |\alpha|-\delta(\tilde{m}+t)} \right\}.
  \end{align*}
  If we split the first integral of the previous inequality into two over the sets $\{|y| < \delta_2 \}$ and $\{|y| \geq \delta_2 \}$, respectively, and if we additionally split the second integral of the previous inequality into two with respect to the sets $\{|y| < \delta_1 \}$ and $\{|y| \geq \delta_1 \}$, respectively, we obtain the claim in case $l=1$ by using the inequalities (\ref{eq3}) - (\ref{eq5}):
  \begin{align*}
    &|a_{\e}-a|^{(m)}_{k, C^{\tilde{m},t} S^m_{\rho, \delta} } \leq \tilde{\e}.
  \end{align*}
  The general case can be verified by using Remark \ref{bem:MatrixwertigerHölderRaum} and case $l=1$ for each entry $a^{ij} \in C_{unif}^{\tilde{m},\tau} S^{m}_{\rho, \delta}(\RnRn)$ of $a = (a^{ij})_{i,j=1}^l \in C^{\tilde{m},\tau} S^{m}_{\rho, \delta}(\RnRn; \mathscr{L}(\C^l))$. 
\end{proof}

Moreover we mention the continuity result for smooth pseudodifferential operators needed later on. For the proof we refer to \cite[Theorem 2.7]{KumanoGo}:

\begin{thm}\label{thm:ConituityResultSmoothCase}
  Let $0\leq \delta<\rho \leq 1$, $m \in \R$ and $1 \leq p < \infty$. Considering a symbol $a \in S^m_{\rho,\delta}(\RnRn)$, we obtain for all $s \in \R$ the continuity of 
  \begin{align*}
    a(x, D_x): H^{m+s}_p(\Rn) \rightarrow H^s_p(\Rn). 
  \end{align*}
Moreover, we have for some $k \in \N_0$
\begin{align*}
	\|a(x, D_x)u\|_{H^{s}_p} \leq C |a|^{(m)}_{k} \|u\|_{H^{m+s}_p} \qquad \text{ for all } u \in H^{m+s}_p(\Rn).
\end{align*}
\end{thm}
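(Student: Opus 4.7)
The plan is to follow the standard approach, which proceeds by reducing to the case of an operator of order zero acting on $L^p$ and then invoking $L^p$-boundedness of zero-order pseudodifferential operators.

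First I would reduce to the case $m = s = 0$. Since the smooth symbol calculus is available for $S^m_{\rho,\delta}$, the composition
\begin{align*}
  b(x,D_x) := \<{D_x}^{s} \circ a(x,D_x) \circ \<{D_x}^{-m-s}
\end{align*}
has symbol $b \in S^0_{\rho,\delta}(\RnRn)$, obtained from the asymptotic expansion
$$ b(x,\xi) \sim \sum_{\alpha} \tfrac{1}{\alpha!} \pa{\alpha} \<{\xi}^{s} \cdot \p_x^{\alpha}\!\bigl(a(x,\xi)\<{\xi}^{-m-s}\bigr), $$
modulo an $S^{-\infty}$ remainder whose operator is trivially bounded between any pair of Bessel potential spaces. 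Hence it is enough to show that every $b(x,D_x)$ with $b \in S^0_{\rho,\delta}$ is bounded on $L^p(\Rn)$, with operator norm controlled by finitely many seminorms $|b|^{(0)}_k$. Once this is established, Theorem \ref{thm:ConituityResultSmoothCase} follows by composing with the isometries $\<{D_x}^{-s}:L^p \to H^s_p$ and $\<{D_x}^{m+s}:H^{m+s}_p \to L^p$.

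For the core $L^p$-boundedness of $S^0_{\rho,\delta}$ operators I would split into three cases. For $p=2$, one invokes the Calderón--Vaillancourt theorem, which gives $L^2$-boundedness for all $b \in S^0_{0,0}$ (and in particular for $S^0_{\rho,\delta}$ with $\delta \le \rho$) with norm bounded by finitely many seminorms. For $1<p<\infty$ with $(\rho,\delta)=(1,0)$, the Schwartz kernel of $b(x,D_x)$ is a standard Calderón--Zygmund kernel, so Calderón--Zygmund theory gives the conclusion. For general $0\le\delta<\rho\le 1$ and $1<p<\infty$, one performs a Littlewood--Paley decomposition $b = \sum_{j\ge 0} b_j$ with $b_j$ supported in $\{|\xi|\sim 2^j\}$; each piece is estimated by $L^2$-methods and the pieces are reassembled using Fefferman--Stein vector-valued maximal inequalities. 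This is the usual Hörmander--Mihlin type argument, exploiting $\delta<\rho$ to gain decay in $j$ sufficient to sum.

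The case $p=1$ is what I expect to be the main obstacle, because the paper defines $H^s_1(\Rn):=F^s_{1,2}(\Rn)$ and there is no direct $L^1$-boundedness at zero order (the Hilbert transform already fails). Here one argues at the level of the Triebel--Lizorkin scale: the operator preserves the frequency localization used to define $F^0_{1,2}$ up to controllable tails, so one verifies boundedness $F^0_{1,2}\to F^0_{1,2}$ via atomic/molecular decomposition and the vector-valued $L^1$-estimate for the Littlewood--Paley square function. Combined with the reduction above this yields the boundedness $a(x,D_x):H^{m+s}_1 \to H^s_1$ and the seminorm estimate, completing the proof.
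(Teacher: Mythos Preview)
The paper does not prove this theorem; it simply cites \cite[Theorem~2.7]{KumanoGo}. Your reduction to order zero via the symbol calculus and the $p=2$ case via Calder\'on--Vaillancourt are correct and standard, so in the regimes that actually matter for the paper your outline is fine.

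There is, however, a genuine gap in your treatment of $\rho < 1$ with $p \neq 2$. You claim that a Littlewood--Paley decomposition plus Fefferman--Stein vector-valued maximal inequalities yields $L^p$-boundedness of $S^0_{\rho,\delta}$ operators, ``exploiting $\delta < \rho$ to gain decay in $j$ sufficient to sum.'' This does not work: the H\"ormander--Mihlin mechanism needs the full decay $|\pa{\alpha} b(x,\xi)| \leq C_\alpha |\xi|^{-|\alpha|}$, i.e.\ $\rho = 1$, and the weaker decay $|\xi|^{-\rho|\alpha|}$ is not enough. Fefferman's counterexamples (multipliers built from $e^{i|\xi|^{a}}$ with $0<a<1$, which lie in $S^0_{1-a,0}$) show that operators in $S^0_{\rho,0}$ with $\rho < 1$ are in general \emph{not} bounded on $L^p$ for $p \neq 2$; what one obtains instead is boundedness with the sharp loss $k_p = (1-\rho)n\,|1/2 - 1/p|$, exactly the quantity appearing in Theorem~\ref{thm:BoundednessResultNonSmooth}. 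The hypothesis $\delta < \rho$ is essential for the symbolic calculus and for the $L^2$ theory, but it does not rescue $L^p$ when $p \neq 2$. In the paper the theorem is only ever invoked with $p=2$ whenever $\rho \neq 1$ (see Step~1 of the proof of Lemma~\ref{lemma:Lemma3.3}, where $q_0=2$ if $\rho\neq 1$), so as \emph{used} the result is unproblematic; as literally stated, the regime $\rho < 1$, $p \neq 2$ over-reaches, and your argument cannot close that gap because no argument can.
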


We also need the following subclass of non-smooth symbols:
\begin{Def}
 	Let  $m\in \R$, $0 \leq \rho, \delta \leq 1$ and $M \in \N_0 \cup \{\infty\}$. Then a function $a: \RnRn \rightarrow\C$ is in the symbol-class $S^m_{\rho, \delta}(\RnRn; M)$, if for all $\alpha, \beta \in \Non$ with $|\alpha| \leq M$ there is a constant $ C_{\alpha, \beta }>0$ such that
\begin{itemize}
	\item $\p^{\beta}_x a(x,.) \in C^M(\Rn)$,
	\item $\p^{\beta}_x \pa{\alpha} a \in C^0(\Rn_x\times \Rn_{\xi})$,
	\item $|\pa{\alpha}\p_x^{\beta} a(x,\xi)| \leq C_{\alpha, \beta } \<{\xi}^{m-\rho|\alpha|+\delta|\beta|} \qquad \text{for all } x,\xi \in \Rn.$
\end{itemize}
For $l \in \N$ the symbol $a=(a_{i,j})_{i,j=1}^l \in S^m_{\rho, \delta}(\RnRn; M;\mathscr{L}(\C^l))$, if $a_{i,j} \in S^m_{\rho, \delta}(\RnRn; M)$ for each $i,j \in \{1, \ldots, l\}$. 
\end{Def}

Similarly to \cite[Remark 4.2]{Diss} we obtain by interpolation the following embedding of two non-smooth symbol-classes:
 \begin{align}\label{eq:GlattesSymbolIstNichtglatt}
      S^{m}_{\rho, \delta}(\Rn \times \R^n; M) \subseteq C^{\tilde{m},\tau} S^{m}_{\rho, \delta}(\Rn \times \R^n;M).
  \end{align}
for all $0<\tau < 1$, $\tilde{m} \in \N_0$, $m\in \R$, $M \in \N_0 \cup \{ \infty \}$ and $0 \leq \rho,\delta \leq 1$.

Additionally we get by means of interpolation the next estimate for non-smooth symbols:

\begin{lemma}\label{lemma:AbschatzungNichtglattesSymbol}
  Let $\tilde{m} \in \N_0$, $0<\tau < 1$, $0 \leq \delta, \rho \leq 1$, $m \in \R$ and $a \in C^{\tilde{m}, \tau} S^m_{\rho, \delta}(\RnRn; M)$. 
  Then we get for all $\alpha \in \Non$ with $|\alpha| \leq M$ and $k \in \N_0$ with $k \leq \tilde{m}$:
  \begin{align*}
    \|\pa{\alpha} a(., \xi)\|_{C^k_b(\Rn)} \leq C_{\alpha, \beta} \<{\xi}^{\tilde{m}-\rho|\alpha| + \delta k} \qquad \text{for all } \xi \in \Rn.
  \end{align*}
\end{lemma}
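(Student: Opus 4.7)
The plan is to apply the interpolation inequality of Lemma \ref{lemma:InterpolationResult} to the function $f := \pa{\alpha} a(\cdot, \xi)$ for fixed $\xi \in \Rn$ and $\alpha \in \Non$ with $|\alpha| \leq M$, using the two bounds built into the definition of $C^{\tilde{m},\tau} S^m_{\rho,\delta}(\RnRn; M)$.

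First I would handle the trivial case $k=0$: property iii) of the symbol class gives
\begin{align*}
  \|\pa{\alpha} a(\cdot, \xi)\|_{C^0_b(\Rn)} \leq C_{\alpha} \<{\xi}^{m-\rho|\alpha|},
\end{align*}
which is exactly the claim with $k=0$.

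For $1 \leq k \leq \tilde{m}$, I would set $\theta := \frac{k}{\tilde{m}+\tau} \in (0,1)$ and apply Lemma \ref{lemma:InterpolationResult} to $f = \pa{\alpha} a(\cdot, \xi)$, obtaining
\begin{align*}
  \|\pa{\alpha} a(\cdot,\xi)\|_{C^k_b(\Rn)} \leq C \,\|\pa{\alpha} a(\cdot,\xi)\|_{C^0_b(\Rn)}^{1-\theta}\,\|\pa{\alpha} a(\cdot,\xi)\|_{C^{\tilde{m},\tau}(\Rn)}^{\theta}.
\end{align*}
Then I would insert property iii), $\|\pa{\alpha} a(\cdot,\xi)\|_{C^0_b} \leq C_{\alpha}\<{\xi}^{m-\rho|\alpha|}$, into the first factor and property iv), $\|\pa{\alpha} a(\cdot,\xi)\|_{C^{\tilde{m},\tau}} \leq C_{\alpha}\<{\xi}^{m-\rho|\alpha|+\delta(\tilde{m}+\tau)}$, into the second factor. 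The resulting exponent of $\<{\xi}$ is
\begin{align*}
  (1-\theta)(m-\rho|\alpha|) + \theta\bigl(m-\rho|\alpha|+\delta(\tilde{m}+\tau)\bigr) = m-\rho|\alpha| + \delta\theta(\tilde{m}+\tau) = m - \rho|\alpha| + \delta k,
\end{align*}
which is exactly the bound claimed.

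There is no real obstacle here: the argument is a one-line interpolation once the right factorization $\theta = k/(\tilde{m}+\tau)$ is chosen so that $\theta(\tilde{m}+\tau)=k$. The only mild care needed is to treat $k=0$ separately since Lemma \ref{lemma:InterpolationResult} is stated for $k \in \N$, but in that case the estimate is already contained in property iii) of the symbol class.
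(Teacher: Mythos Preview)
Your proof is correct and is precisely the interpolation argument the paper alludes to (the paper gives no detailed proof, only the phrase ``by means of interpolation''). Note also that the exponent in the statement should read $m-\rho|\alpha|+\delta k$ rather than $\tilde{m}-\rho|\alpha|+\delta k$, which is exactly what your computation produces; this is a typo in the paper, as is the stray $\beta$ in $C_{\alpha,\beta}$.
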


\subsection{Symbol-Smoothing}\label{Subsection:SymbolSmoothing}

Results as the Fredholm property of non-smooth pseudodifferential operators can be proved by means of symbol-smoothing, see e.g.\  \cite{Paper3}. 
In the present paper we use this tool in order to show a regularity result for non-smooth pseudodifferential operators, cf. Lemma \ref{lemma:Lemma3.3}.
 Some properties for symbol-smoothing can be found in \cite[Section 3]{Paper3} and if the symbol is smooth with respect to the second variable we refer to  \cite[Section 1.3]{Taylor2}. 
In order to define the symbol-smoothing for non-smooth pseudodifferential operators, 
 we fix the dyadic partition of unity $(\psi_j)_{j \in N_0}$ defined as in Section \ref{section:Preliminaries} and  $\phi \in C^{\infty}_c(\Rn)$ with $\phi(\xi)=1, |\xi|\leq 1$, throughout the whole subsection.
 
Using 
\begin{align}\label{8e}
  C_1 \<{\xi}^{-a} \leq 2^{-ja} \leq C_2 \<{\xi}^{-a} \qquad \text{for all } \xi \in \supp(\psi_j), j \in \N
\end{align}
for $a \in \R$
 we get for all $\alpha \in \Non$, $j \in \N_0$:
\begin{align}\label{7e}
  \|\p^{\alpha}_{\xi}\psi_j\|_{\infty} \leq C_{\alpha}\<{\xi}^{-|\alpha|}.
\end{align}
 Additionally  the operator $J_{\e}$ is defined for all $\e>0$ via
\begin{align*}
  J_{\e}:= \phi(x, D_x).
\end{align*}

\begin{Def}\label{Def:SymbolSmoothing}
  Let $\tilde{m} \in \N_0$, $0<\tau <1$, $M \in \N_0 \cup \{ \infty\}$, $m \in \R$ and $0 \leq \delta < \rho \leq 1$. For $\gamma \in (\delta, \rho)$ we set $\e_j:= 2^{-j\gamma}$. For each $a \in C^{\tilde{m}, \tau} S^{m}_{\rho, \delta}(\RnRn; M)$ we define 
  \begin{itemize}
    \item $a^{\sharp}(x, \xi):= \sum\limits_{j=0}^{\infty} J_{\e_j} a(x, \xi)\psi_j(\xi)$ for all $x, \xi \in \Rn$,
    \item $a^b(x, \xi):= a(x, \xi)-a^{\sharp}(x, \xi)$ for all $x, \xi \in \Rn$.
  \end{itemize}
\end{Def}

For so called slowly varying symbols $a$,  the  symbols $a^{\sharp}(x,\xi)$ and $a^b(x,\xi)$ have very useful properties, needed later on.

\begin{Def}\label{Def:SlowlyVaryingSymbols}
  Let $\tilde{m} \in \N_0$, $0 < \tau < 1$, $m \in \R$, $0 \leq \delta, \rho \leq 1$ and $M \in \N_0 \cup \{ \infty \}$. 
  Then  $a \in C^{\tilde{m}, \tau} S^m_{\rho, \delta} (\RnRn; M)$ belongs to the symbol-class $C^{\tilde{m}, \tau} \dot{S}^m_{\rho, \delta} (\RnRn; M)$, if for all $\alpha, \beta \in \Non$ with $|\alpha| \leq M$ and $|\beta| \leq \tilde{m}$ we have
  \begin{align}\label{eq:SlowlyVaryingSymbols}
    |\pa{\alpha} D_x^{\beta} a(x, \xi)| \leq C_{\alpha, \beta}(x) \<{\xi}^{m-\rho|\alpha|+\delta|\beta|} \qquad \text{for all } x, \xi \in \Rn,
  \end{align}
  where $C_{\alpha, \beta}(x)$ is a bounded function, which converges to zero, if $|x| \rightarrow \infty$. \\
  Moreover, $a \in C^{\tilde{m}, \tau} S^m_{\rho, \delta} (\RnRn; M)$ belongs to the symbol-class $C^{\tilde{m}, \tau} \tilde{S}^m_{\rho, \delta} (\RnRn; M)$, if for all $\beta \in \Non$ with $|\beta| \leq \tilde{m}$ and $|\beta | \neq 0$ we have
  \begin{align*}
    D_x^{\beta} a(x, \xi) \in C^{\tilde{m}-|\beta|, \tau} \dot{S}^{m+\delta|\beta|}_{\rho, \delta} (\RnRn; M).
  \end{align*}
	Additionally all symbols $a \in S^m_{\rho, \delta}(\RnRn;M)$ fulfilling \eqref{eq:SlowlyVaryingSymbols} for all $\alpha, \beta \in  \Non$ with $|\alpha| \leq M$ are in the set $\dot{S}^m_{\rho, \delta}(\RnRn;M)$. \\
	A symbol $a \in  S^m_{\rho, \delta} (\RnRn; M)$ belongs to the symbol-class $\tilde{S}^m_{\rho, \delta} (\RnRn; M)$, if for all $\beta \in \Non$ with $|\beta | \neq 0$ we have
  \begin{align*}
    D_x^{\beta} a(x, \xi) \in \dot{S}^{m+\delta|\beta|}_{\rho, \delta} (\RnRn; M).
  \end{align*}
  We call the elements of  $C^{\tilde{m}, \tau} \tilde{S}^m_{\rho, \delta} (\RnRn; M)$ and of $\tilde{S}^m_{\rho, \delta}(\RnRn;M)$ \textbf{slowly varying symbols}.
\end{Def}

The following results are proven in  cf.\  \cite[Lemma 3.7, Lemma 3.8]{Paper3}:

\begin{lemma}\label{lemma:SymbolSmoothing2}
  Let $0 \leq \delta < \rho \leq 1$, $\tilde{m} \in \N_0$, $0<\tau <1$, $M \in \N \cup \{ \infty \}$, $m \in \R$ and $a \in C^{\tilde{m}, \tau} S^m_{\rho, \delta}(\RnRn; M)$. Moreover let $\gamma \in (\delta, \rho)$. Then we have for all $\beta \in \Non$ with $|\beta| \leq \tilde{m}$: 
  \begin{itemize}
      \item[i)] $D_x^{\beta} a^{\sharp}(x, \xi) \in S^{m+\delta|\beta|}_{\rho, \gamma}(\RnRn; M)$,
      \item[ii)] if $a \in C^{\tilde{m}, \tau} \dot{S}^m_{\rho, \delta}(\RnRn; M)$ or if $|\beta| \neq 0$ and $a \in C^{\tilde{m}, \tau} \tilde{S}^m_{\rho, \delta}(\RnRn; M)$, then $D_x^{\beta} a^{\sharp}(x, \xi) \in \dot{S}^{m+\delta|\beta|}_{\rho, \gamma}(\RnRn; M)$
  \end{itemize}
\end{lemma}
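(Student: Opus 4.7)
My plan is to reduce the statement to concrete symbol estimates on $\pa{\alpha} D_x^{\beta'} (D_x^\beta a^\sharp)(x,\xi)$ and to control each term of the dyadic series using the key scaling $\e_j \sim \<{\xi}^{-\gamma}$ on $\supp \psi_j$. Concretely, since $J_{\e_j}$ commutes with $\xi$-derivatives, applying Leibniz in $\xi$ gives
\begin{align*}
  \pa{\alpha} D_x^{\beta+\beta'} a^\sharp(x,\xi) = \sum_j \sum_{\alpha' \leq \alpha} \binom{\alpha}{\alpha'} D_x^{\beta+\beta'} J_{\e_j} \pa{\alpha'} a(x,\xi) \cdot \pa{\alpha-\alpha'}\psi_j(\xi).
\end{align*}
The supports $\supp \psi_j$ are essentially disjoint (only $O(1)$ overlap), so boundedness of the sum reduces to a uniform bound on each summand. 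Using $|\pa{\alpha-\alpha'}\psi_j(\xi)| \leq C\<{\xi}^{-|\alpha-\alpha'|}$ from (\ref{7e}) and $\rho \leq 1$, everything comes down to estimating $|D_x^{\beta+\beta'} J_{\e_j} \pa{\alpha'} a(x,\xi)|$ by $C\<{\xi}^{m+\delta|\beta|+\gamma|\beta'|-\rho|\alpha'|}$ on $\supp\psi_j$.

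For this pointwise estimate I split into two cases. When $|\beta+\beta'| \leq \tilde m$, we commute all derivatives through $J_{\e_j}$ (convolution in $x$) and use the straightforward $L^\infty$-bound $\|D_x^{\beta+\beta'} \pa{\alpha'} a(\cdot,\xi)\|_\infty \leq C\<{\xi}^{m-\rho|\alpha'|+\delta(|\beta|+|\beta'|)}$, which is bounded by what we want since $\delta \leq \gamma$. When $|\beta+\beta'| > \tilde m$ we split $\beta' = \beta_1'+\beta_2'$ with $|\beta+\beta_2'| = \tilde m$ and $|\beta_1'| = |\beta|+|\beta'|-\tilde m \geq 1$, commute $D_x^{\beta+\beta_2'}$ through $J_{\e_j}$ to land on a $C^{0,\tau}$-function $g_\xi := D_x^{\beta+\beta_2'}\pa{\alpha'} a(\cdot,\xi)$ with $\|g_\xi\|_{C^{0,\tau}} \leq C\<{\xi}^{m-\rho|\alpha'|+\delta(\tilde m+\tau)}$, and then use the cancellation $\int D_x^{\beta_1'} \check\phi_{\e_j}\,dy = 0$ to gain Hölder regularity:
\begin{align*}
  |D_x^{\beta_1'} J_{\e_j} g_\xi(x)| = \Bigl|\int D_x^{\beta_1'}\check\phi_{\e_j}(x-y)[g_\xi(y)-g_\xi(x)]\,dy\Bigr| \leq C \e_j^{\tau - |\beta_1'|} [g_\xi]_{C^{0,\tau}}.
\end{align*}
Inserting $\e_j^{-1} \sim \<{\xi}^\gamma$ on $\supp\psi_j$ yields the exponent $m+\delta|\beta|+\gamma|\beta'| -\rho|\alpha'| - (\gamma-\delta)(\tilde m + \tau - |\beta|)$, whose last term is strictly negative since $|\beta| \leq \tilde m$ and $\gamma > \delta$; hence the desired upper bound holds. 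This proves (i).

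For (ii) I propagate the slow-varying structure: if $a \in C^{\tilde m,\tau}\dot S^m_{\rho,\delta}$, every estimate above can be sharpened by replacing the constant by $C_{\alpha',\beta+\beta'}(x)$ in case A and by $[g_\xi]_{C^{0,\tau}}$-type Hölder seminorms inherited from slowly varying functions in case B. The key point is that applying $J_{\e_j}$ to a function $C(x)$ vanishing at infinity preserves this decay uniformly in $j$: split the convolution integral into $|y|\leq |x|/2$ (using rapid decay of $\check\phi_{\e_j}$ and $\e_j \leq 1$) and $|y|>|x|/2$ (using $C(y)\to 0$). Since only $O(1)$ summands in $j$ are active for each $\xi$, the resulting $\tilde C(x)$ in the symbol bound still tends to zero as $|x|\to\infty$. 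In the complementary case of (ii), $|\beta|\neq 0$ with $a \in C^{\tilde m,\tau}\tilde S^m_{\rho,\delta}$, the definition of $\tilde S$ gives $D_x^\beta a \in C^{\tilde m-|\beta|,\tau}\dot S^{m+\delta|\beta|}_{\rho,\delta}$; since $J_{\e_j}$ commutes with $D_x^\beta$, we have $D_x^\beta a^\sharp = (D_x^\beta a)^\sharp$, and the first clause (ii) applied to $D_x^\beta a$ yields the result.

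The main technical obstacle will be the bookkeeping of exponents in the Hölder-regularity case (case B above), where one must verify that the gain $-(\gamma-\delta)(\tilde m+\tau-|\beta|)$ exactly absorbs the mismatch between $\delta|\beta|+\gamma|\beta'|$ (what we want) and $\gamma(|\beta|+|\beta'|)$ (what the naive scaling gives). This hinges critically on the assumption $|\beta| \leq \tilde m$ and on $\gamma > \delta$, and is the reason the symbol class $C^{\tilde m,\tau}S^m_{\rho,\delta}$ (rather than merely $C^0$) is needed for symbol-smoothing to land in a useful class.
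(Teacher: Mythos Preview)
The paper does not prove this lemma itself; it simply cites \cite[Lemma~3.7]{Paper3}, so there is no in-paper proof to compare against. Your argument is the standard Taylor-style symbol-smoothing estimate and is correct for part~(i): the Leibniz expansion, the $O(1)$-overlap of the $\psi_j$, the two-case split according to whether $|\beta+\beta'|\leq\tilde m$, and the H\"older-cancellation estimate $|D_x^{\beta_1'}J_{\e_j}g_\xi|\leq C\e_j^{\tau-|\beta_1'|}[g_\xi]_{C^{0,\tau}}$ together with $\e_j^{-1}\sim\<{\xi}^\gamma$ on $\supp\psi_j$ all work exactly as you say, and your exponent bookkeeping yielding the surplus $-(\gamma-\delta)(\tilde m+\tau-|\beta|)\leq 0$ is accurate.

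There is, however, a small gap in your treatment of part~(ii), first clause, in case~B. You propose to replace $[g_\xi]_{C^{0,\tau}}$ by ``H\"older seminorms inherited from slowly varying functions'', but Definition~\ref{Def:SlowlyVaryingSymbols} for $C^{\tilde m,\tau}\dot S^m_{\rho,\delta}$ only guarantees \emph{pointwise} decay $|D_x^{\beta}\pa{\alpha}a(x,\xi)|\leq C_{\alpha,\beta}(x)\<{\xi}^{m-\rho|\alpha|+\delta|\beta|}$ for $|\beta|\leq\tilde m$; it gives no control on the decay of the local H\"older seminorm of the top-order derivatives. The clean fix is to abandon the cancellation trick in this case and estimate directly:
\[
  |D_x^{\beta_1'}J_{\e_j}g_\xi(x)|
  \leq \e_j^{-|\beta_1'|}\int\bigl|(D^{\beta_1'}\check\phi)_{\e_j}(x-y)\bigr|\,|g_\xi(y)|\,dy
  \leq C\,\e_j^{-|\beta_1'|}\,\tilde C(x)\,\<{\xi}^{m-\rho|\alpha'|+\delta\tilde m},
\]
where $\tilde C(x)$ arises from $C_{\alpha',\beta+\beta_2'}(y)$ via exactly the convolution-splitting argument you already use for case~A, hence $\tilde C(x)\to 0$ uniformly in $j$. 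The resulting exponent on $\supp\psi_j$ is
\[
  m-\rho|\alpha'|+\delta\tilde m+\gamma|\beta_1'|
  = m+\delta|\beta|+\gamma|\beta'|-\rho|\alpha'|-(\gamma-\delta)(\tilde m-|\beta|),
\]
which is still $\leq m+\delta|\beta|+\gamma|\beta'|-\rho|\alpha'|$ since $|\beta|\leq\tilde m$. Your reduction of the second clause of~(ii) to the first via $D_x^{\beta}a^\sharp=(D_x^{\beta}a)^\sharp$ and $D_x^\beta a\in C^{\tilde m-|\beta|,\tau}\dot S^{m+\delta|\beta|}_{\rho,\delta}$ is correct.
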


\begin{lemma}\label{lemma:SymbolSmoothing1}
  Let $0 \leq \delta < \rho \leq 1$, $\tilde{m} \in \N_0$, $0<\tau <1$, $M \in \N \cup \{ \infty \}$, $m \in \R$ and $a \in C^{\tilde{m}, \tau} \tilde{S}^m_{\rho, \delta}(\RnRn; M)$ such that
  \begin{align*}
    a(x, \xi) \xrightarrow{|x| \rightarrow \infty} a(\infty, \xi) \qquad \text{for all } \xi \in \Rn.
  \end{align*}
  Moreover we set $b(x, \xi):= a(x, \xi)-a(\infty, \xi)$ for all $x, \xi \in \Rn$. Additionally we define $a^{\sharp}, a^b, a^{\sharp}(\infty, .)$ and $a^b(\infty, .)$ as in Definition \ref{Def:SymbolSmoothing}. Then we have for $\gamma \in (\delta, \rho)$ and $\tilde{\e} \in \left( 0, (\gamma- \delta)\tau \right)$: 
  \begin{itemize}
    \item[i)] $a^{\sharp}(\infty, \xi)= a(\infty, \xi) \in S^m_{\rho, \delta}(\RnRn; 0)$,
    \item[ii)] $a^b(\infty, \xi) = 0$ for all $\xi \in \Rn$,
    \item[iii)] $a^b(x, \xi) \in C^{\tilde{m}, \tau} \tilde{S}_{\rho, \gamma}^{m-(\gamma-\delta)(\tilde{m}+\tau)+\tilde{\e}}(\RnRn; M) 
		  \cap C^{\tilde{m}, \tau} \dot{S}_{\rho, \gamma}^{m-(\gamma-\delta)(\tilde{m}+\tau)+\tilde{\e}}(\RnRn; 0) $,
    \item[iv)] $a^{\sharp}(x, \xi) = a(\infty, \xi) + b^{\sharp}(x, \xi)$ for all $x, \xi \in \Rn$.
  \end{itemize}
\end{lemma}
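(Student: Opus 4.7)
The plan is to dispatch parts (i), (ii), and (iv) by a direct calculation and to concentrate the work on (iii). The operator $J_\e$ is the Fourier multiplier in $x$ with symbol $\phi(\e\eta)$; since $\phi(0) = 1$, it fixes every $x$-independent function, in particular $J_{\e_j}[a(\infty,\cdot)](x,\xi) = a(\infty,\xi)$. Splitting $a = b + a(\infty,\cdot)$ with $b(x,\xi) := a(x,\xi) - a(\infty,\xi)$ and using $\sum_j \psi_j(\xi) = 1$, the definition of $a^{\sharp}$ immediately gives $a^{\sharp}(x,\xi) = b^{\sharp}(x,\xi) + a(\infty,\xi)$, which is (iv). Taking $|x| \to \infty$ in (iv) and using the convolution representation of $J_{\e_j}$ together with the pointwise vanishing of $b(\cdot,\xi)$ (and the uniform bound $|b(x,\xi)| \leq C\<{\xi}^m$) gives $J_{\e_j} b(x,\xi) \to 0$ termwise, and since the $\xi$-sum is locally finite this yields $a^{\sharp}(\infty, \xi) = a(\infty, \xi)$, proving (i). The estimate $|a(\infty,\xi)| \leq C\<{\xi}^m$ is inherited from $a$, giving $a(\infty,\cdot) \in S^m_{\rho,\delta}(\RnRn;0)$, while (ii) follows by subtraction.

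For (iii), the reduction via (iv) is $a^b = b - b^{\sharp} = b^b$, so it suffices to analyze $b^b$ with $b \in C^{\tilde m, \tau}\tilde S^m_{\rho,\delta}(\RnRn; M)$ and $b(\infty,\xi) = 0$. I would then invoke the standard symbol-smoothing remainder estimate (essentially as in \cite[Section 3]{Paper3}; cf.\ also \cite[Section 1.3]{Taylor2}). Since $\phi \equiv 1$ near the origin, the polynomial moments up to order $\tilde m$ of $\e_j^{-n}\check{\phi}(\cdot/\e_j)$ vanish, so Taylor-expanding $b(\cdot,\xi)$ in the $x$-variable to order $\tilde m$ gives
\[
\bigl| \p_\xi^\alpha (1-\phi(\e_j D_x))b(\cdot,\xi)(x) \bigr| \leq C_\alpha \, \e_j^{\tilde m + \tau} \, [\p_\xi^\alpha b(\cdot,\xi)]_{C^{\tilde m, \tau}(B_{\e_j}(x))},
\]
with the Hölder seminorm localized around $x$. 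On $\supp \psi_j$ one has $\<{\xi} \sim 2^j$ and $\e_j = 2^{-j\gamma}$, and only finitely many $\psi_j(\xi)$ are non-zero at each $\xi$, so plugging in the global bound $[\p_\xi^\alpha b(\cdot,\xi)]_{C^{\tilde m, \tau}} \leq C\<{\xi}^{m-\rho|\alpha|+\delta(\tilde m + \tau)}$ and absorbing a logarithmic loss into $\tilde\e$ yields $b^b \in C^{\tilde m,\tau} S^{m'}_{\rho,\gamma}(\RnRn; M)$ with $m' = m - (\gamma-\delta)(\tilde m+\tau) + \tilde \e$.

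The upgrade to $C^{\tilde m,\tau}\tilde S^{m'}_{\rho,\gamma}(\RnRn; M) \cap C^{\tilde m,\tau}\dot S^{m'}_{\rho,\gamma}(\RnRn; 0)$ is the heart of the argument and the principal obstacle. The localized seminorm $[\p_\xi^\alpha b(\cdot,\xi)]_{C^{\tilde m,\tau}(B_{\e_j}(x))}$ must be shown to be slowly varying in $x$. Here I would invoke Lemma \ref{lemma:interpolation3} on $\Rn \setminus B_R(0)$ to interpolate the top-order Hölder seminorm of $b$ between $\|b(\cdot,\xi)\|_{C^0_b(\Rn\setminus B_R(0))}$, which vanishes as $R\to\infty$ since $b(\infty,\cdot) = 0$, and the globally bounded $\|b(\cdot,\xi)\|_{C^{\tilde m,\tau}}$. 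The interpolation forces a slight drop $s < \tau$ in the effective Hölder exponent, which is exactly the role of the $\tilde \e \in (0, (\gamma-\delta)\tau)$ slack. For the $\tilde S$-part, I would differentiate in $x$ first: since $D_x^\beta b \in C^{\tilde m - |\beta|,\tau}\dot S^{m+\delta|\beta|}_{\rho,\delta}$ is genuinely slowly varying for $|\beta| \geq 1$, the same argument applied to $D_x^\beta b$ delivers slowly varying bounds for $D_x^\beta b^b$. The principal technical difficulty lies precisely in this book-keeping: reconciling the $\e_j^{\tilde m+\tau}$-type decay, which requires the full $(\tilde m+\tau)$-th order Hölder regularity of $b$, with the slowly varying property, which is unconditionally available only for $x$-derivatives of $b$ of order $\geq 1$.
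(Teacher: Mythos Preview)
The paper does not actually prove this lemma; it is stated immediately after the sentence ``The following results are proven in cf.\ \cite[Lemma 3.7, Lemma 3.8]{Paper3}'' and is imported wholesale from that reference. So there is no in-paper argument to compare against, and your proposal is being judged on its own merits.

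Your treatment of (i), (ii), (iv) is correct and is the natural argument: $J_{\e_j}$ is convolution against a kernel of total mass $\phi(0)=1$, so it fixes every $x$-independent function, and the decomposition $a = a(\infty,\cdot) + b$ together with $\sum_j \psi_j = 1$ gives (iv) directly; (i) and (ii) then follow by dominated convergence in the locally finite sum.

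For (iii) your outline is on the right track, and in particular you correctly locate the crux: the basic symbol-smoothing remainder bound only uses the \emph{global} $C^{\tilde m,\tau}$ seminorm of $b(\cdot,\xi)$, which is merely bounded, so the vanishing-at-infinity property for $b^b$ must come from interpolating between that bounded seminorm and the $C^0$ norm of $b$ on $\Rn\setminus B_R(0)$, which does tend to zero. Using Lemma~\ref{lemma:interpolation3} for this is exactly right, and it also explains the $\tilde\e$: the interpolation only yields decay of $\|b(\cdot,\xi)\|_{C^{\tilde m,s}(\Rn\setminus B_R)}$ for $s<\tau$, so the remainder estimate gives $\e_j^{\tilde m+s}$ rather than $\e_j^{\tilde m+\tau}$, producing the extra $(\gamma-\delta)(\tau-s) = \tilde\e \in (0,(\gamma-\delta)\tau)$ in the order. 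Your phrase ``absorbing a logarithmic loss into $\tilde\e$'' is a misdiagnosis---there is no logarithm anywhere; the loss is the drop from $\tau$ to $s$ forced by interpolation.

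One technical point does need repair: you localize the H\"older seminorm to $B_{\e_j}(x)$, but the convolution kernel $\e_j^{-n}\check\phi(\cdot/\e_j)$ is Schwartz, not compactly supported, so the remainder integral is over all of $\Rn$. The clean fix is to work with the global seminorm on $\Rn\setminus B_R(0)$ (for $|x|\gg R$ split the convolution into $|y|\leq |x|/2$ and $|y|>|x|/2$, the latter being negligible by rapid decay of $\check\phi$), rather than trying to localize to a ball of radius $\e_j$. With that correction, your argument for the $\dot S$-part goes through, and the $\tilde S$-part is, as you say, the same computation applied to $D_x^\beta b \in C^{\tilde m-|\beta|,\tau}\dot S^{m+\delta|\beta|}_{\rho,\delta}$ for $|\beta|\geq 1$.
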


In order to prove the invariance of the Fredholm index we also need the next two statements:

\begin{kor}\label{kor:Eigenschaft2}
  Let $\tilde{m}_1,l \in \N$, $0 < \tau_1 <1$, $m_1, m_2 \in \R$, $0 \leq \delta < \rho \leq 1$; $M_1, M_2 \in \N_0 \cup \{ \infty \}$ with $M_1>n+1$. Additionally let $N:= M_1-(n+1)$. For $a_1 \in C^{\tilde{m}_1, \tau_1} S^{m_1}_{\rho, \delta}(\RnRn; M_1; \mathscr{L}(\C^l))$ and $a_2 \in S^{m_2}_{\rho, \delta}(\RnRn; M_2; \mathscr{L}(\C^l))$ we define for all $x,\xi \in \Rn$ the symbol $a(x,\xi):= (a_{i,j}(x,\xi))_{i,j=1}^l$ via
  \begin{align*}
    a_{i,j}(x, \xi):= \osint e^{-iy\cdot \eta} \left(a_1(x, \xi+\eta) a_2(x+y, \xi) \right)_{i,j} dy \dq \eta \qquad \text{for all } i,j=1, \ldots, l
  \end{align*}
  and for all $k \in \N$ with $k \leq N$, $\gamma \in \Non$ with $|\gamma| \leq N$ and $\theta \in[0,1]$ we set
  \begin{itemize}
    \item $a_1\sharp_k a_2 (x, \xi):= \sum\limits_{|\gamma| <k} \frac{1}{\gamma !} \pa{\gamma} a_1(x,\xi) D_x^{\gamma} a_2(x,\xi)$,
    \item $r_{\gamma, \theta}^{i,j}(x, \xi):= \osint  e^{-iy\cdot \eta} \left( \p_{\eta}^{\gamma}  a_1(x, \xi+\theta \eta) D_y^{\gamma} a_2( x+y,\xi) \right)_{i,j} dy \dq \eta$
  \end{itemize}
  for all $x, \xi \in\Rn$ and $i,j=1, \ldots, l$. Moreover we define $R_k:= (R_k^{i,j})_{i,j=1}^l: \RnRn \rightarrow \mathscr{L}(\C^l)$ by 
  \begin{align*}
    R_k^{i,j}(x,\xi):= k  \sum_{|\gamma|=k} \int_0^1\frac{(1-\theta)^{k-1}}{\gamma!}r^{i,j}_{\gamma, \theta}(x,\xi) d\theta ,
  \end{align*}
  for all $x,\xi \in \Rn$. Then 
  $$a(x, \xi) = a_1\sharp_k a_2 (x, \xi) + R_k(x, \xi) \qquad \text{for all } x, \xi \in \Rn$$
  and with $\tilde{M}_k:= \min \{ M_1-k+1; M_2\}$ and $\tilde{N}_k:= \min \{M_1-k-(n+1); M_2\}$ we obtain 
  \begin{itemize}
    \item $a_1\sharp_k a_2 (x, \xi) \in C^{\tilde{m}_1, \tau_1} S^{m_1+m_2}_{\rho, \delta}(\RnRn; \tilde{M}_k; \mathscr{L}(\C^l))$,
    \item $R_k (x, \xi) \in C^{\tilde{m}_1, \tau_1} S^{m_1+m_2-(\rho-\delta)k}_{\rho, \delta}(\RnRn; \tilde{N}_k; \mathscr{L}(\C^l))$.
  \end{itemize}
  In particular we have $a \in  C^{\tilde{m}_1, \tau_1} S^{m_1+m_2}_{\rho, \delta}(\RnRn; \tilde{N}_1; \mathscr{L}(\C^l))$. If we even have $a_2 \in \tilde{S}^{m_2}_{\rho, \delta}(\RnRn; M_2; \mathscr{L}(\C^l))$, then  $R_k \in C^{\tilde{m}_1, \tau_1} \dot{S}^{m_1+m_2-(\rho-\delta)k}_{\rho, \delta}(\RnRn; \tilde{N}_k; \mathscr{L}(\C^l))$ for all  $k \in \N$ with $k \leq N$. 
\end{kor}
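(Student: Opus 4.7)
My approach is to derive the stated representation from the Taylor expansion of $a_1$ in the second variable and then verify the symbol estimates via standard oscillatory-integral manipulations. Concretely, Taylor's theorem with integral remainder gives
\begin{align*}
  a_1(x, \xi+\eta) &= \sum_{|\gamma|<k} \frac{\eta^\gamma}{\gamma!} \pa{\gamma} a_1(x,\xi)\\
  &\quad + k \sum_{|\gamma|=k} \int_0^1 \frac{(1-\theta)^{k-1}}{\gamma!} \eta^\gamma (\pa{\gamma} a_1)(x, \xi+\theta\eta) \, d\theta,
\end{align*}
which I would substitute into the defining oscillatory integral for $a_{i,j}$ and then interchange the $\theta$-integration with the oscillatory integration (justified via the stability/convergence results of Section \ref{subsection:SpaceOfAmplitudes}). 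For each term with $|\gamma|<k$ one writes $\eta^\gamma e^{-iy\cdot \eta} = (-1)^{|\gamma|} D_y^\gamma e^{-iy\cdot\eta}$ and integrates by parts in $y$, moving $D_y^\gamma$ onto $a_2(x+y,\xi) = (D_x^\gamma a_2)(x+y,\xi)$; the Fourier-inversion identity $\osint e^{-iy\cdot\eta} f(y)\,dy\,\dq\eta = f(0)$ then evaluates the remaining integral at $y=0$ and produces $\frac{1}{\gamma!} \pa{\gamma} a_1(x,\xi) D_x^\gamma a_2(x,\xi)$. Summing over $|\gamma|<k$ gives $a_1 \sharp_k a_2$ and the $|\gamma|=k$ contributions assemble exactly into $R_k$.

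For the symbol-class membership of $a_1 \sharp_k a_2$ I would apply Lemma \ref{lemma:PropertyHoelderSpaces} to each pointwise product $\pa{\gamma} a_1 \cdot D_x^\gamma a_2$ for $|\gamma|<k$, noting that $\pa{\gamma} a_1 \in C^{\tilde{m}_1,\tau_1} S^{m_1-\rho|\gamma|}_{\rho,\delta}$ uses $|\gamma|$ of the available $\xi$-derivatives on $a_1$ while $D_x^\gamma a_2 \in S^{m_2+\delta|\gamma|}_{\rho,\delta}$ costs no $\xi$-derivatives of $a_2$; counting further $\pa{\alpha}$-derivatives on the product via Leibniz yields the bound $\tilde{M}_k = \min\{M_1-k+1, M_2\}$ and, since $-(\rho-\delta)|\gamma|\leq 0$, the order $m_1+m_2$.

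The technical heart is the estimate on the remainder $R_k$. I would bound each $r_{\gamma,\theta}$ uniformly in $\theta \in [0,1]$ as an oscillatory integral by inserting $A^{l'}(D_\eta, y) A^l(D_y, \eta)$ as in Theorem \ref{thm:propertiesOsciInt} with $l,l'$ chosen large enough to secure absolute convergence, and then computing $\pa{\alpha}\p_x^\beta$ under the integral sign. Derivatives distribute by Leibniz onto the $a_1$-factor, which for $|\gamma|=k$ carries a total of $k+|\alpha|$ $\xi$-derivatives with order $\<{\xi+\theta\eta}^{m_1-\rho(k+|\alpha|)+\delta|\beta|}$; Peetre's inequality $\<{\xi+\theta\eta}^s \leq C_s \<{\xi}^s \<{\eta}^{|s|}$ replaces this by $\<{\xi}^{m_1-\rho(k+|\alpha|)+\delta|\beta|}$ at the price of extra $\<{\eta}$-powers which are absorbed by further integrations by parts in $y$. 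The $a_2$-factor $D_y^\gamma a_2(x+y,\xi)$ contributes $\<{\xi}^{m_2+\delta k}$, and the decisive cancellation $-\rho k + \delta k = -(\rho-\delta)k$ yields the improved order $m_1+m_2-(\rho-\delta)k$. The Hölder estimate in $x$ transfers entirely to $a_1$ (as $a_2$ is smooth in $x$), preserving the full $C^{\tilde{m}_1,\tau_1}$ regularity.

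In the slowly varying case $a_2 \in \tilde{S}^{m_2}_{\rho,\delta}$ one has $D_x^\gamma a_2 \in \dot{S}^{m_2+\delta k}_{\rho,\delta}$ for $|\gamma|=k\geq 1$, so the bounding constant for $D_x^\gamma a_2(x+y,\xi)$ has the form $C_\gamma(x+y)$ with $C_\gamma$ vanishing at infinity. After sufficient integrations by parts the $y$-integrand is $L^1$ with rapid decay, so one may pull out $\sup_{|y|\leq R} C_\gamma(x+y)$ plus a tail estimate and conclude that the resulting $x$-dependent bound on $R_k$ still vanishes as $|x| \to \infty$, placing $R_k$ in $C^{\tilde{m}_1,\tau_1} \dot{S}^{m_1+m_2-(\rho-\delta)k}_{\rho,\delta}$. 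I expect the main obstacle to be the exact bookkeeping of derivative orders, namely verifying that the choices of $l, l'$ needed for the regularization are compatible with $\tilde{N}_k = \min\{M_1-k-(n+1), M_2\}$, together with the careful use of Peetre's inequality; the algebraic skeleton of the proof follows the classical Kumano-go calculus, with Lemma \ref{lemma:PropertyHoelderSpaces} handling the Hölder-regular factor $a_1$ in all estimates.
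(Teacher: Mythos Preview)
Your outline is correct and follows the classical Kumano-go composition calculus adapted to H\"older coefficients. The paper itself does not carry out this argument: it simply cites \cite[Corollary 4.6]{Paper3} for the scalar case $l=1$ and remarks that the matrix-valued case follows componentwise from the definition of matrix multiplication. What you sketch---Taylor expansion of $a_1(x,\xi+\eta)$ in $\eta$, integration by parts in $y$ to evaluate the polynomial terms via Fourier inversion, regularization of the remainder integrals by $A^{l'}(D_\eta,y)A^l(D_y,\eta)$ together with Peetre's inequality, and H\"older bookkeeping via Lemma~\ref{lemma:PropertyHoelderSpaces}---is exactly the content of that cited result, and your derivative counts for $\tilde M_k$ and $\tilde N_k$ are consistent with the regularization budget (the $l'=n+1$ extra $\eta$-derivatives falling on $a_1$). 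The slowly-varying refinement via a near/far splitting of the $y$-integral is also the standard route once the integrand is $L^1$. In short, your approach and the paper's (through its reference) agree; you have simply unpacked what the paper delegates.
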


For $l=1$ the previous Corollary was proved in  \cite[Corollary 4.6]{Paper3}. The general case can be proved in the same way, taking into account, that in each step of the proof we can use case $l=1$ due to the definition of an matrix product. Choosing $\tilde{\e}$ small enough in \cite[Theorem 4.7]{Paper3} provides  
the statement of the next result for $p>1$. Since the compactness result for non-smooth pseudodifferential operators of Marschall in \cite[Theorem 4]{MarschallOnTheBoundednessAndCompactness} holds for $p=1$, too, Theorem  4.7 of \cite{Paper3} is also true for $p=1$. The proof is exactly the same as in the case $p>1$.

\begin{thm}\label{thm:Eigenschaften1}
  Let $\tilde{m}_1\in \N_0$, $\tilde{m}_2 \in \N$, $0<\tau_1, \tau_2 <1$, $0\leq \delta <\rho \leq 1$ and $m_1,m_2 \in \R$. Moreover let $p=2$ if $\rho \neq 1$ and $1 \leq p<\infty$ else. We choose  $\theta \notin \N_0$ with $\theta \in  \left(0,(\tilde{m}_2 + \tau_2)(\rho-\delta) \right)$ 
and define $(\tilde{m},\tau):= (\lfloor s \rfloor, s-\lfloor s \rfloor)$, where $s:=\min\{ \tilde{m}_1 + \tau_1; \tilde{m}_2+ \tau_2- \lfloor \theta \rfloor\}$. Additionally let $M_1, M_2 \in \N_0 \cup\{\infty\}$ with $M_1>(n+1)+\lceil \theta \rceil + n\cdot \max\{ \frac{1}{2}, \frac{1}{p} \}$ and $M_2> n\cdot \max\{ \frac{1}{2}, \frac{1}{p} \}$.
  Moreover let $a_1 \in C^{\tilde{m}_1,\tau_1} S^{m_1}_{\rho,\delta}(\RnRn;M_1)$ and $a_2 \in C^{\tilde{m}_2,\tau_2} \tilde{S}^{m_2}_{\rho,\delta}(\RnRn;M_2)$ such that
  \begin{align*}
    a_2(x, \xi) \xrightarrow{|x|\rightarrow \infty} a_2(\infty, \xi) \qquad \text{for all } \xi \in \Rn.
  \end{align*}
  Then we get for each $s \in \R$ fulfilling $(1-\rho)\frac{n}{p}-(1-\delta)(\tilde{m}_2+\tau_2)+\theta < s+m_1 < \tilde{m} + \tau_2$ and $(1-\rho)\frac{n}{p}-(1-\delta)(\tilde{m}+\tau)+\frac{\tilde{m}+\tau}{\tilde{m}_2+\tau_2}\cdot \theta <s < \tilde{m}+\tau$, that
  \begin{align*}
    a_1(x, D_x)a_2(x, D_x)-(a_1 \sharp_{\lceil \theta \rceil} a_2)(x, D_x): H_p^{s+m_1+m_2}(\Rn) \rightarrow H^s_p(\Rn) \quad \text{is compact.}
  \end{align*}
  where $a_1 \sharp_{\lceil \theta \rceil}a_2(x,\xi)$ is defined as in Corollary \ref{kor:Eigenschaft2}.
\end{thm}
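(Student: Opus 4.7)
The plan is to reduce the operator $a_1(x,D_x)a_2(x,D_x) - (a_1\sharp_{\lceil\theta\rceil}a_2)(x,D_x)$ to a sum of non-smooth pseudodifferential operators with slowly varying symbols of order strictly below $m_1+m_2-\theta$, and then invoke Marschall's compactness theorem from \cite{MarschallOnTheBoundednessAndCompactness} together with the boundedness result Theorem \ref{thm:BoundednessResultNonSmooth} and Corollary \ref{kor:Eigenschaft2}. Symbol smoothing of $a_2$ is the central tool.

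First I would fix $\gamma\in(\delta,\rho)$ sufficiently close to $\rho$ and $\tilde\epsilon\in(0,(\gamma-\delta)\tau_2)$ sufficiently small that $m_2':=m_2-(\gamma-\delta)(\tilde m_2+\tau_2)+\tilde\epsilon<m_2-\theta$; the standing assumption $\theta<(\tilde m_2+\tau_2)(\rho-\delta)$ makes this possible. Applying Definition \ref{Def:SymbolSmoothing} I decompose $a_2=a_2^\sharp+a_2^b$. Since $a_2(x,\xi)\to a_2(\infty,\xi)$, Lemma \ref{lemma:SymbolSmoothing1} yields $a_2^\sharp(x,\xi)=a_2(\infty,\xi)+b^\sharp(x,\xi)$, where $a_2(\infty,\cdot)\in S^{m_2}_{\rho,\delta}(\RnRn;0)$ depends only on $\xi$, $b^\sharp\in \dot{S}^{m_2}_{\rho,\gamma}$ is smooth and slowly varying, and $a_2^b\in C^{\tilde m_2,\tau_2}\dot{S}^{m_2'}_{\rho,\gamma}$ is non-smooth but slowly varying.

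Because $a_2(\infty,\xi)$ is $\xi$-only, the operator composition $a_1(x,D_x)\,a_2(\infty,D_x)$ equals $(a_1\cdot a_2(\infty,\cdot))(x,D_x)=(a_1\sharp_{\lceil\theta\rceil}a_2(\infty,\cdot))(x,D_x)$ exactly, so this piece cancels in the difference. What remains is
\begin{align*}
  T_1&:=a_1(x,D_x)b^\sharp(x,D_x)-(a_1\sharp_{\lceil\theta\rceil}b^\sharp)(x,D_x),\\
  T_2&:=a_1(x,D_x)a_2^b(x,D_x),\\
  T_3&:=(a_1\sharp_{\lceil\theta\rceil}a_2^b)(x,D_x),
\end{align*}
and the claim amounts to compactness of $T_1+T_2-T_3$. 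For $T_1$, since $b^\sharp$ is smooth and slowly varying, Corollary \ref{kor:Eigenschaft2} applies and gives $T_1=R(x,D_x)$ with $R\in C^{\tilde m_1,\tau_1}\dot{S}^{m_1+m_2-(\rho-\gamma)\lceil\theta\rceil}_{\rho,\gamma}$. By the choice of $\gamma$, its order lies strictly below $m_1+m_2-\theta$, and the slowly varying property together with Marschall's compactness theorem yields compactness on the required Sobolev spaces. For $T_2$, I use that $a_2^b$ is slowly varying of order $m_2'<m_2-\theta$: Marschall's compactness theorem gives compactness of $a_2^b(x,D_x):H^{s+m_1+m_2}_p\to H^{s+m_1}_p$, while Theorem \ref{thm:BoundednessResultNonSmooth} supplies the continuity of $a_1(x,D_x):H^{s+m_1}_p\to H^s_p$ under the given range of $s$ and $s+m_1$; hence $T_2$ is compact. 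Finally $T_3$ is a finite sum of non-smooth pseudodifferential operators with symbols $\tfrac{1}{\gamma!}\pa{\gamma}a_1\cdot D_x^\gamma a_2^b$, each slowly varying of order at most $m_1+m_2'<m_1+m_2-\theta$, and another application of Marschall's theorem gives their compactness.

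The main obstacle is the bookkeeping: one has to verify compatibility of all the order and regularity indices simultaneously. The two-sided intervals for $s+m_1$ and $s$ in the statement, with their shifts by $\theta$ and $\tfrac{\tilde m+\tau}{\tilde m_2+\tau_2}\theta$, are designed precisely so that a single choice of $\gamma$ close to $\rho$ (and $\tilde\epsilon$ correspondingly small) places $a_1$ in the regime of Theorem \ref{thm:BoundednessResultNonSmooth} on the intermediate space $H^{s+m_1}_p$, and places $R$, $a_2^b$, and each expansion term of $a_1\sharp_{\lceil\theta\rceil}a_2^b$ in the regime where Marschall's compactness theorem is applicable. The proof therefore essentially consists of checking that these regimes overlap for the chosen $\gamma$.
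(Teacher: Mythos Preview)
Your strategy---symbol-smooth $a_2$, split off the $\xi$-only limit $a_2(\infty,\cdot)$ so that it cancels exactly, and then treat $T_1,T_2,T_3$ separately via Corollary \ref{kor:Eigenschaft2} and Marschall's compactness theorem---is precisely the argument of \cite[Theorem 4.7]{Paper3}, to which the paper defers here (with the remark that Marschall's result also covers $p=1$).

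There is one slip you should correct. With $\gamma$ taken close to $\rho$, the remainder order for $T_1$ is $m_1+m_2-(\rho-\gamma)\lceil\theta\rceil$, and $(\rho-\gamma)\lceil\theta\rceil\to 0$ as $\gamma\to\rho$; it is \emph{not} strictly below $m_1+m_2-\theta$ as you assert. The compactness of $T_1$ does not come from an order gain of size $\theta$ at all: it comes from the fact that $R$ lies in the \emph{dotted} class $C^{\tilde m_1,\tau_1}\dot S^{\,m_R}_{\rho,\gamma}$ (because $b^\sharp\in\dot S\subseteq\tilde S$, so the last clause of Corollary \ref{kor:Eigenschaft2} applies). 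Marschall's theorem then gives compactness of $R(x,D_x):H^{s+m_R}_p\to H^s_p$ for the appropriate $s$-range, and the continuous inclusion $H^{s+m_1+m_2}_p\hookrightarrow H^{s+m_R}_p$ finishes the job. The parameter $\theta$ enters the two $s$-intervals in the hypotheses not through $T_1$ but through the $\delta\mapsto\gamma$ shift in the symbol classes of $a_2^b$ and $D_x^\gamma a_2^b$: choosing $(\gamma-\delta)(\tilde m_2+\tau_2)$ just above $\theta$ produces exactly the shifts $+\theta$ and $+\tfrac{\tilde m+\tau}{\tilde m_2+\tau_2}\theta$ in the lower bounds of Theorem \ref{thm:BoundednessResultNonSmooth}/Marschall for $T_2$ and $T_3$. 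Once you adjust the justification for $T_1$ accordingly, your bookkeeping goes through.
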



\section{Invariance of the Fredholm Index}\label{InvarianceFredholmIndex}

The aim of this section is to prove the invariance of the Fredholm index for non-smooth pseudodifferential operators, cf.\  Theorem \ref{thm:invarianceOfIndex}. As an ingredient for the proof we need the following sufficient condition for non-smooth pseudodifferential operators to be a Fredholm operator: 

\begin{thm}\label{thm:Fredholmproperty}
  Let $\tilde{m}, l \in \N$, $0<\tau <1$, $0\leq \delta <\rho \leq 1$, $m \in \R$, $M \in \N_0 \cup\{\infty\}$  and  $p \in [1,\infty) $ with $p=2$ if $\rho \neq 1$. Moreover let $a \in C^{\tilde{m},\tau}\tilde{S}^{m}_{\rho,\delta}(\RnRn;M;\mathscr{L}(\C^l))$ be a symbol fulfilling the following properties for some $R>0$ and $C_0>0$:
  \begin{itemize}
    \item[1)] $|\det(a(x,\xi))|\<{\xi}^{-ml}\geq C_0$ for all $x,\xi \in \Rn$ with $|x|+|\xi|\geq R$.
    \item[2)] $a(x, \xi) \xrightarrow{|x| \rightarrow \infty} a(\infty, \xi)$ for all $\xi \in \Rn$.
   \end{itemize}
   Then for all  $M \geq (n+2)+n\cdot\max\{1/2, 1/p\}$ and $s\in \R$ with $$(1-\rho)\frac{n}{p}-(1-\delta)(\tilde{m}+\tau)<s<\tilde{m}+\tau$$
  the operator
  \begin{align*}
    a(x,D_x): H^{m+s}_p(\Rn)^{ l} \rightarrow H^s_p(\Rn)^{ l}
  \end{align*}
  is a Fredholm operator.
\end{thm}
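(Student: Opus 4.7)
The plan is to build a two-sided parametrix for $a(x,D_x)$ modulo compact operators. First I would apply Lemma \ref{lemma:SymbolSmoothing1} with a suitable $\gamma \in (\delta,\rho)$ and small $\tilde{\e} \in (0,(\gamma-\delta)\tau)$ to split $a = a^\sharp + a^b$, where $a^\sharp$ is smooth and satisfies $a^\sharp(\infty,\xi) = a(\infty,\xi)$, while the non-smooth remainder $a^b$ lies in the slowly varying class $C^{\tilde{m},\tau}\tilde{S}^{m'}_{\rho,\gamma}(\RnRn;M;\mathscr{L}(\C^l))$ with strictly lower order $m' = m - (\gamma-\delta)(\tilde{m}+\tau) + \tilde{\e}$. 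Since $a^b$ is of lower order and converges to $0$ at infinity, condition 1) transfers from $a$ to $a^\sharp$: there exist $R' \geq R$ and $C_0' > 0$ such that $|\det a^\sharp(x,\xi)|\<{\xi}^{-ml} \geq C_0'$ for $|x|+|\xi| \geq R'$.

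Next I would construct the parametrix in the smooth calculus. Using a smooth cutoff which equals $1$ outside a large compact set in $(x,\xi)$, Cramer's rule applied to $a^\sharp$ (together with the fact that $S^m_{\rho,\gamma}$ is closed under pointwise matrix multiplication) yields a symbol $b_0 \in S^{-m}_{\rho,\gamma}(\RnRn;\mathscr{L}(\C^l))$ whose pointwise product with $a^\sharp$ equals the identity outside a compact set in $(x,\xi)$. The standard asymptotic expansion for smooth pseudodifferential operators, valid since $\gamma<\rho$, then refines $b_0$ to $b \in S^{-m}_{\rho,\gamma}(\RnRn;\mathscr{L}(\C^l))$ with $b \sharp a^\sharp - I$ and $a^\sharp \sharp b - I$ of order $-\infty$, so that $b(x,D_x) a^\sharp(x,D_x) - I$ and $a^\sharp(x,D_x) b(x,D_x) - I$ are smoothing, hence compact from $\left(H^{s+m}_p\right)^l$ to $\left(H^s_p\right)^l$ by Theorem \ref{thm:ConituityResultSmoothCase} combined with a Rellich-type argument.

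Then I would decompose
\begin{align*}
b(x,D_x) a(x,D_x) - I = \bigl(b(x,D_x) a^\sharp(x,D_x) - I\bigr) + b(x,D_x) a^b(x,D_x),
\end{align*}
so that the first summand is compact by the previous step while the second is compact by Theorem \ref{thm:Eigenschaften1} applied with $a_1 = b$ (smooth, hence of arbitrary Hölder regularity in $x$) and $a_2 = a^b$ (slowly varying, of order $m'$): for a suitable small $\theta \notin \N_0$ the leading term $(b \sharp_{\lceil\theta\rceil} a^b)(x,D_x)$ is bounded into a strictly smaller Sobolev scale and the remainder is compact by the theorem itself. The symmetric argument yields that $a(x,D_x) b(x,D_x) - I$ is compact, so $a(x,D_x)$ admits two-sided parametrices modulo compact operators and is therefore Fredholm.

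The hard part will be to calibrate $\gamma$, $\tilde{\e}$, and $\theta$ so that the compatibility conditions of Theorem \ref{thm:Eigenschaften1} are simultaneously met for every admissible $s$ in the range $(1-\rho)\tfrac{n}{p} - (1-\delta)(\tilde{m}+\tau) < s < \tilde{m}+\tau$: the shifted index constraints in that theorem depend on the orders of both factors, so $\gamma$ has to be chosen close enough to $\rho$ and $\tilde{\e}$ small enough that the order $m'$ of $a^b$, combined with $b \in S^{-m}_{\rho,\gamma}$, produces a genuinely compact correction on the full admissible range of $s$ and $p$. The matrix-valued aspect requires no essentially new tool beyond Cramer's rule for the inverse symbol, since each symbol class involved is closed under componentwise matrix multiplication.
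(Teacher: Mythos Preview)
Your strategy differs from the paper's: following \cite{Paper3}, the paper builds the parametrix \emph{symbol} directly in the non-smooth class via Lemma~\ref{lemma:HilfslemmaFuerBeweisDerFredholmproperty}, setting $b(x,\xi)=\psi(R^{-2}(|x|^2+|\xi|^2))\,\tilde a(x,\xi)^{-1}$ with $\tilde a = a\langle\xi\rangle^{-m}$. The point is that $b\cdot\tilde a-\mathbbm{1}$ is then \emph{compactly supported} in $(x,\xi)$, hence automatically in $S^{-\infty}$ and compact; the difference between $b(x,D_x)\tilde a(x,D_x)$ and $(b\cdot\tilde a)(x,D_x)$ is controlled by applying symbol-smoothing to $\tilde a$ and Corollary~\ref{kor:Eigenschaft2}/Theorem~\ref{thm:Eigenschaften1} (exactly as displayed in the proof of Lemma~\ref{lemma:Lemma3.3}). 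Your route---smooth $a$ first, then invert $a^\sharp$ in the smooth calculus---is plausible in outline, but two steps as written do not go through.

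First, you cannot refine $b_0$ to a parametrix with $b\sharp a^\sharp-I$ of order $-\infty$: by Lemma~\ref{lemma:SymbolSmoothing2}, $a^\sharp\in S^m_{\rho,\gamma}(\RnRn;M)$ has only $M$ derivatives in $\xi$, so the asymptotic expansion terminates after finitely many steps. This is repairable (any fixed negative order would suffice), but the statement is false as written when $M<\infty$. Second, and more seriously, your compactness claims fail on $\Rn$: neither ``smoothing, hence compact by a Rellich-type argument'' nor ``bounded into a strictly smaller Sobolev scale'' yields a compact operator on $H^s_p(\Rn)$, since $H^{s+\varepsilon}_p(\Rn)\hookrightarrow H^s_p(\Rn)$ is \emph{not} compact. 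What actually forces compactness here is that each remainder symbol lies in a class $\dot S^{m''}_{\rho,\gamma}$ of negative order with coefficients vanishing as $|x|\to\infty$---this is Marschall's compactness theorem, which underlies Theorem~\ref{thm:Eigenschaften1}. For your scheme to work you would have to check explicitly that (i) the higher terms of $b\sharp a^\sharp$ lie in such a $\dot S$ class (this uses Lemma~\ref{lemma:SymbolSmoothing2}~ii), i.e.\ that $a^\sharp\in\tilde S$, so the $x$-derivatives appearing in the expansion decay), and (ii) the leading piece $b\cdot a^b$ does as well (this uses Lemma~\ref{lemma:SymbolSmoothing1}~iii)). Both are true, but ``Rellich'' is not the mechanism, and the argument as stated has a gap.
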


The previous theorem follows from \cite[Theorem 1.1]{Paper3} for all $p \neq 1$, if one chooses $\theta$ and $\tilde{\e}$ small enough. Verifying the proof of \cite[Theorem 1.1]{Paper3} provides, that Theorem \ref{thm:Fredholmproperty} also is true for $p=1$.  As an ingredient for the proof, the following lemma was used, cf.\  \cite[Lemma 4.9]{Paper3}: 

\begin{lemma}\label{lemma:HilfslemmaFuerBeweisDerFredholmproperty}
   Let $\tilde{m} \in \N_0$, $l \in \N$, $0<\tau <1$, $0\leq \delta <\rho \leq 1$ and $M \in \N_0 \cup\{\infty\}$. Additionally let $a \in C^{\tilde{m},\tau}\tilde{S}^{0}_{\rho,\delta}(\RnRn;M;\mathscr{L}(\C^l))$ be such that property $1)$ of Theorem \ref{thm:Fredholmproperty} holds. Moreover let $\psi \in C^{\infty}_b(\Rn)$ be such that $\psi(x)=0$ if $|x|\leq 1$ and $\psi(x)=1$ if $|x|\geq 2$. Then $b:\RnRn \rightarrow \C^{l\times l}$ defined by
  \begin{align*}
    b(x,\xi):= \psi(R^{-2}(|x|^2+ |\xi|^2))a(x,\xi)^{-1} \qquad \text{for all } x,\xi \in \Rn
  \end{align*}
  is an element of  $C^{\tilde{m}, \tau} \tilde{S}^0_{\rho, \delta}(\RnRn;M;\mathscr{L}(\C^l))$.
\end{lemma}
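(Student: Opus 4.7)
The plan is to write $a^{-1} = \det(a)^{-1}\operatorname{adj}(a)$ on $\supp\psi_R$ and extend each factor to a globally defined symbol in $C^{\tilde{m},\tau}\tilde{S}^0_{\rho,\delta}$. Setting $\psi_R(x,\xi):=\psi(R^{-2}(|x|^2+|\xi|^2))$, I would first check $\psi_R\in \tilde{S}^0_{\rho,\delta}(\RnRn;\infty)$: it is bounded, while each derivative $\pa{\alpha}\p_x^\beta \psi_R$ with $|\alpha|+|\beta|\geq 1$ is supported in the compact annulus $R^2\leq |x|^2+|\xi|^2\leq 2R^2$, giving the $S^0_{\rho,\delta}$ bound and, for $|\beta|\geq 1$, the slowly varying property for free. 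Moreover, on $\supp\psi_R$ we have $|x|+|\xi|\geq (|x|^2+|\xi|^2)^{1/2}\geq R$, so property 1) of Theorem~\ref{thm:Fredholmproperty} yields $|\det a(x,\xi)|\geq C_0$ throughout $\supp\psi_R$.

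Next I would verify that the class $C^{\tilde{m},\tau}\tilde{S}^0_{\rho,\delta}(\RnRn;M;\mathscr{L}(\C^l))$ is closed under pointwise products. For the $S^0_{\rho,\delta}$ bound this follows from Leibniz together with Lemma~\ref{lemma:PropertyHoelderSpaces}, while the slowly varying property is preserved because in $\p_x^\beta(fg)$ with $|\beta|\geq 1$ every summand carries a pure $\p_x$-derivative on at least one factor, which is slowly varying, and a slowly varying symbol times a bounded one remains slowly varying. Consequently $\operatorname{adj}(a)\in C^{\tilde{m},\tau}\tilde{S}^0_{\rho,\delta}(\RnRn;M;\mathscr{L}(\C^l))$ and $\det(a)\in C^{\tilde{m},\tau}\tilde{S}^0_{\rho,\delta}(\RnRn;M)$, since both are polynomial expressions in the entries $a_{ij}$.

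To handle the reciprocal globally, I would fix $h\in C^\infty(\C;\C)$ with $h(z)=z^{-1}$ for $|z|\geq C_0$ and bounded derivatives of every order (e.g.\ $h(z):=\chi(|z|^2/C_0^2)\bar z/|z|^2$ for a suitable $\chi\in C^\infty(\R;[0,1])$), and set $g(x,\xi):=h(\det a(x,\xi))$. By Fa\`a~di~Bruno each derivative $\pa{\alpha}\p_x^\beta g$ is a finite sum of terms of the shape $h^{(k)}(\det a)\prod_j \pa{\alpha_j}\p_x^{\beta_j}\det a$ with $\sum_j\alpha_j=\alpha$ and $\sum_j\beta_j=\beta$, from which $g\in C^{\tilde{m},\tau}\tilde{S}^0_{\rho,\delta}(\RnRn;M)$ follows; in particular, whenever $|\beta|\geq 1$ at least one factor carries a pure $\p_x$-derivative of $\det a$, forcing the whole product into $\dot{S}^{\delta|\beta|}_{\rho,\delta}$. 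Since $g=1/\det a$ on $\supp\psi_R$, the globally defined product $b=\psi_R\cdot g\cdot \operatorname{adj}(a)$ agrees with $\psi_R\cdot a^{-1}$ on $\supp\psi_R$ and vanishes elsewhere, so by the product closure it belongs to $C^{\tilde{m},\tau}\tilde{S}^0_{\rho,\delta}(\RnRn;M;\mathscr{L}(\C^l))$. The main obstacle will be the Fa\`a~di~Bruno step, where one must control the full Hölder $C^{\tilde{m},\tau}$-norm in $x$ uniformly in $\xi$ for the composition $h\circ\det a$, combining interpolation estimates on Hölder norms of compositions with the $\<{\xi}$-weighted symbol bounds.
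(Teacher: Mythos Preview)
The paper does not give a self-contained proof of this lemma; it is stated as \cite[Lemma 4.9]{Paper3} and used as a black box. Your outline via Cramer's rule --- factor $a^{-1}=(\det a)^{-1}\operatorname{adj}(a)$, extend the reciprocal smoothly off the elliptic region by composing with an $h\in C^\infty_b$, and close under products --- is the standard route and is correct in substance. The checks that $\psi_R\in\tilde{S}^0_{\rho,\delta}$ (derivatives compactly supported) and that the Leibniz expansion of $D_x^\beta(fg)$ always throws at least one $x$-derivative onto a factor are both sound, and product closure of $C^{\tilde m,\tau}\tilde{S}^0_{\rho,\delta}$ follows exactly as you say from Lemma~\ref{lemma:PropertyHoelderSpaces}.

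Two small points worth tightening. First, your $h$ is smooth as a function of $(\operatorname{Re}z,\operatorname{Im}z)$, not holomorphic, so the Fa\`a di Bruno expansion of $\partial_\xi^\alpha\partial_x^\beta h(\det a)$ actually involves products of partial derivatives of $h$ (bounded by construction) against products of derivatives of $\operatorname{Re}\det a$ and $\operatorname{Im}\det a$; this changes nothing structurally but should be said. Second, for the $C^{\tilde m,\tau}$-norm in $x$ of $h(\det a(\cdot,\xi))$ you need, beyond Fa\`a di Bruno on the $\tilde m$ classical derivatives, the elementary fact that postcomposition by a $C^1_b$ function preserves $C^{0,\tau}$ with norm controlled by $\|h'\|_\infty$; combining this with Lemma~\ref{lemma:PropertyHoelderSpaces} handles the top Hölder increment uniformly in $\xi$ and yields the symbol bound you flagged as the main obstacle.
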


We also use the next regularity result for non-smooth pseudodifferential operators in order to prove the invariance of the Fredholm index:

\begin{lemma}\label{lemma:Lemma3.3}
  Let  $l \in \N$, $0\leq \delta < \rho \leq1 $, $m \in \R$,  $\tilde{m} \in \N_0$, $0<\tau <1$, $p \in [1,\infty)$ and $M \in \N_0 \cup \{ \infty \}$ with $M \geq n+ 2$. If  $\rho \neq 1$ we additionally assume $p=2$ and $\tilde{m}+\tau > \frac{1-\rho}{1-\delta} \cdot \frac{n}{2}$. 
  We consider a 
 symbol $a \in C^{\tilde{m}, \tau} \tilde{S}_{\rho,\delta}^m(\RnRn; M; \mathscr{L}(\C^l))$ fulfilling the following properties:
  \begin{itemize}
    \item[1)] $|\det (a(x,\xi))|\<{\xi}^{-ml}\geq C_0$ for all $x,\xi \in \Rn$ with $|x|+|\xi|\geq R$
	\item[2)] $\|\pa{\alpha} a(x, \xi) -  \pa{\alpha}a(\infty, \xi)\|_{\mathcal{L}(\C^l)}\<{\xi}^{-m+\rho |\alpha|} \xrightarrow{|x| \rightarrow \infty} 0$ uniformly in $\xi \in \Rn$ for all $\alpha \in \Non$ with $|\alpha| \leq n+2$.
  \end{itemize}
  If $u \in \left(H^{m+s}_p(\Rn) \right)^l$ with $(1-\rho)\frac{n}{p}-(1-\delta)(\tilde{m}+\tau)<s<\tilde{m}+\tau$ and $a(x, D_x)u = \Phi \in \left( C_c^{\infty}(\Rn) \right)^l$, then $u \in \left( H^{r+m}_q(\Rn) \right)^l$ for all $q \in [1, \infty)$ and $r \in \R$ with $r<\tilde{m}+\tau$.
\end{lemma}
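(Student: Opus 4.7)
The strategy is to combine symbol-smoothing with a standard smooth-parametrix construction and then bootstrap the regularity. First, fix $\gamma\in(\delta,\rho)$ close to $\delta$ and a small $\tilde\varepsilon>0$, and decompose $a=a^\sharp+a^b$ as in Definition \ref{Def:SymbolSmoothing}. Lemmas \ref{lemma:SymbolSmoothing2} and \ref{lemma:SymbolSmoothing1} give $a^\sharp=a(\infty,\cdot)+b^\sharp$ with $b^\sharp\in\dot{S}^m_{\rho,\gamma}(\RnRn;\mathscr{L}(\C^l))$, so that $a^\sharp\in S^m_{\rho,\gamma}(\RnRn;\mathscr{L}(\C^l))$ is smooth, while $a^b\in C^{\tilde{m},\tau}\tilde{S}^{m-\sigma}_{\rho,\gamma}(\RnRn;M;\mathscr{L}(\C^l))$ has lower order with $\sigma:=(\gamma-\delta)(\tilde{m}+\tau)-\tilde\varepsilon>0$. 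Choosing $\gamma$ close enough to $\delta$ preserves $\tilde{m}+\tau>\frac{1-\rho}{1-\gamma}\cdot\frac{n}{2}$ when $\rho\neq 1$, so that Theorem \ref{thm:BoundednessResultNonSmooth} still applies to $a^b$. Moreover, passing to $|x|\to\infty$ in property $1)$ via property $2)$ shows $|\det a(\infty,\xi)|\<{\xi}^{-ml}\geq C_0$ for $|\xi|\geq R$, and since $b^\sharp$ vanishes at spatial infinity, $a^\sharp$ is elliptic outside a bounded subset of $\RnRn$.

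Second, invoking the standard H\"ormander calculus for the smooth symbol $a^\sharp$, I would construct a smooth parametrix $e\in S^{-m}_{\rho,\gamma}(\RnRn;\mathscr{L}(\C^l))$ with $e(x,D_x)\,a^\sharp(x,D_x)=I-K_1-R_0$, where $K_1$ is a smooth Fourier multiplier whose symbol has compact support in $\xi$ (accounting for the non-elliptic bounded region), and $R_0$ is a smoothing operator. From $a^\sharp(x,D_x)u=\Phi-a^b(x,D_x)u$, applying $e(x,D_x)$ yields the key identity
\begin{align*}
u \;=\; e(x,D_x)\Phi \;-\; e(x,D_x)\,a^b(x,D_x)\,u \;+\; K_1 u \;+\; R_0 u.
\end{align*}
Here $e(x,D_x)\Phi\in(\s)^l$ because $\Phi\in(C_c^\infty)^l$ and $e$ is a classical smooth symbol with $\gamma<1$; $K_1 u$ is globally smooth since $K_1$ is a Fourier multiplier with compactly supported symbol; and $R_0 u$ is smooth. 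By Theorem \ref{thm:BoundednessResultNonSmooth} applied to $a^b$ followed by Theorem \ref{thm:ConituityResultSmoothCase} for $e$, the operator $e(x,D_x)\,a^b(x,D_x)$ sends $(H^{s+m}_p(\Rn))^l$ into $(H^{s+m+\sigma}_p(\Rn))^l$ as long as $s+\sigma<\tilde{m}+\tau$.

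Third, I would bootstrap. Starting from the hypothesized $u\in(H^{s+m}_p)^l$, one application of the identity gains $\sigma$ units of regularity modulo smooth remainders; iterating finitely many times, with $\gamma$ chosen so that $\sigma$ is sufficiently small, produces $u\in(H^{r+m}_p)^l$ for any prescribed $r<\tilde{m}+\tau$. To pass from $p$ to arbitrary $q\in[1,\infty)$ I would use Sobolev embeddings for $q\geq p$ and exploit the explicit structure of the parametrix identity together with the compact support of $\Phi$ and the asymptotically constant-coefficient nature of $a^\sharp$ to extract spatial decay for small $q$. The main obstacle will be exactly this last step: whereas $e(x,D_x)\Phi$ is Schwartz and $K_1 u$, $R_0 u$ are smooth, verifying that these remainders additionally possess the decay needed to lie in $(H^{r+m}_q)^l$ for small $q$ requires exploiting that $a^\sharp$ (and hence $e$, $K_1$ and $R_0$) is controlled at infinity by the elliptic constant-coefficient symbol $a(\infty,\xi)$, whose inverse applied to compactly supported data indeed produces genuinely decaying output; a clean way to organize this is to also invoke the non-smooth parametrix from Lemma \ref{lemma:HilfslemmaFuerBeweisDerFredholmproperty} applied directly to $\Phi$ at the final stage of the iteration.
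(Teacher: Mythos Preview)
Your bootstrap argument for improving the Sobolev index at fixed $p$ is essentially the right idea and parallels the paper's Step~1--2, though the paper organizes it slightly differently: rather than smoothing $a$ and building a smooth parametrix for $a^\sharp$, it takes the non-smooth parametrix $b(x,\xi)=\psi(R^{-2}(|x|^2+|\xi|^2))\tilde a(x,\xi)^{-1}$ of Lemma~\ref{lemma:HilfslemmaFuerBeweisDerFredholmproperty} for $\tilde a=a\langle\xi\rangle^{-m}$, smooths $\tilde a$, and analyzes the composition $b(x,D_x)\tilde a(x,D_x)$ via Corollary~\ref{kor:Eigenschaft2}. Either route yields a fixed gain $m_n>0$ per step, and Sobolev embedding then handles $q\geq p$ exactly as you describe (the paper's Step~4).

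The genuine gap is your treatment of $q<p$. You write that $K_1u$ and $R_0u$ are ``smooth'' and that $e(x,D_x)\Phi\in(\mathcal S)^l$, and propose to ``extract spatial decay'' from the asymptotically constant-coefficient structure. But smoothness in the sense of $H^{\infty}_p$ does not give membership in $H^r_q$ for $q<p$: if $K_1=\chi(D_x)$ with $\chi\in C_c^\infty$, then $K_1u=(\mathcal F^{-1}\chi)*u$ is a Schwartz-$L^p$ convolution, which by Young stays in $L^p$ but has no reason to lie in $L^q$ for $q<p$; the same applies to a smoothing remainder $R_0$. Your final sentence about invoking Lemma~\ref{lemma:HilfslemmaFuerBeweisDerFredholmproperty} ``applied directly to $\Phi$'' does not close this: that lemma only produces a symbol, and applying the resulting non-smooth operator to $\Phi$ still leaves you with the remainder terms above acting on $u$, whose decay you have not established.

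The paper's Step~3 uses a completely different mechanism for $q<p$. It does not try to show that parametrix remainders decay. Instead, it uses Lemma~\ref{HilfslemmaFuerLemma3.3} to build $\hat a(x,\xi)=\psi(\hat R^{-1}x)a(x,\xi)+(1-\psi(\hat R^{-1}x))a(\infty,\xi)$ so close to the constant-coefficient elliptic symbol $a(\infty,\xi)$ that $\hat a(x,D_x)$ is invertible \emph{simultaneously} as a map $(H^{m+s}_p)^l\to(H^s_p)^l$ and $(H^{m+s}_q)^l\to(H^s_q)^l$. Then
\[
\hat a(x,D_x)u=\Phi-(1-\psi(\hat R^{-1}x))a(x,D_x)u+(1-\psi(\hat R^{-1}x))a(\infty,D_x)u,
\]
and the last two terms are supported in $\overline{B_{2\hat R}(0)}$, so the local embedding $H^s_p(\overline{B_{2\hat R}(0)})\hookrightarrow H^s_q(\overline{B_{2\hat R}(0)})$ for $q\leq p$ places the right-hand side in $(H^s_q)^l$. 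Invertibility of $\hat a(x,D_x)$ on the $q$-scale then gives $u\in(H^{m+s}_q)^l$ directly. This is the missing idea you need; without it your argument for $q<p$ does not go through.
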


\begin{bem}\label{bem:SymbolClassOfaInfty}
	Since $a=(a_{i,j})_{i,j=1}^l \in  C^{\tilde{m}, \tau} S_{\rho,\delta}^m(\RnRn; M; \mathscr{L}(\C^l))$ we have due to Remark \ref{bem:MatrixwertigerHölderRaum}  and the Leibniz-rule for all $\alpha \in \Non$ with $|\alpha| \leq M-1$  
	\begin{align*}
	&\max_{i,j =1, \ldots, l} \| \pa{\alpha}a_{i,j}(x,\xi) \<{\xi}^{-m+\rho|\alpha|}\|_{C^{0,1}(\Rn_{\xi})}
	\leq \max_{i,j =1, \ldots, l} \| \pa{\alpha}a_{i,j}(x,\xi) \<{\xi}^{-m+\rho|\alpha|}\|_{C^{1}_b(\Rn_{\xi})}  \\
	& \leq \max_{i,j =1, \ldots, l} \max_{k=1, \ldots, n}\left\{ \sup_{\xi \in \Rn} \| \pa{\alpha +e_k}a_{i,j}(x,\xi) \<{\xi}^{-m+\rho|\alpha|}\|_{C_b^{0} (\Rn_{x})} \right. \\
	&\qquad \qquad \qquad \qquad \left.+\sup_{\xi \in \Rn} \| \pa{\alpha}a_{i,j}(x,\xi) \<{\xi}^{-m+\rho|\alpha|}\|_{C_b^{0}(\Rn_{x})}   \right\} 
	 \leq C \qquad \text{ for all } x \in \Rn.
	\end{align*}
	where each entry of $e_k \in \N^n_0$ is $0$ except the $k-$th one, which is $1$.
	Taking the limit $|x| \rightarrow \infty$ on both sides of this inequality yields for all $\xi \in \Rn$:
	\begin{align}\label{eq:rem1}
		|\pa{\alpha}a_{i,j}(\infty, \xi)  \<{\xi}^{-m+\rho|\alpha|}| \leq \|\pa{\alpha}a_{i,j}(\infty, \xi)  \<{\xi}^{-m+\rho|\alpha|}\|_{C^{0,1}(\Rn_{\xi})} 
	\leq C 
	\end{align}
	for all $i,j = 1,\ldots, l$. Consequently we have
	\begin{align*}
		|\pa{\alpha}a(\infty, \xi_1) \<{\xi_1}^{-m+\rho|\alpha|}-\pa{\alpha}a(\infty, \xi_2) \<{\xi_2}^{-m+\rho|\alpha|}| \leq C | \xi_1 -\xi_2 | \xrightarrow{\xi_1 \rightarrow \xi_2} 0
	\end{align*} 
	for all $i,j = 1,\ldots, l$.
	Because of $\<{\xi}^{-m+\rho|\alpha|} \in C^{\infty}(\Rn_{\xi})$ this implies $\pa{\alpha} a(\infty, \xi) \in C^0(\RnRn; \C^{l\times l})$. Together with \eqref{eq:rem1} and Remark \ref{bem:MatrixwertigerHölderRaum}  we obtain 
	\begin{align}\label{eq:SymbolClassOfaInfty}
		a(\infty, \xi) \in S^m_{\rho, \delta}(\RnRn; M-1; \mathscr{L}(\C^{l})).
	\end{align}
\end{bem}

\begin{bem}\label{bem:Regularity1}
  Let us even assume $M \geq n+4$ and for all $\alpha \in \Non$ with $|\alpha| \leq n+3$ we assume $\pa{\alpha} a(x, \xi) \rightarrow \pa{\alpha} a(\infty, \xi)$ for all $\xi \in \Rn$ if $|x|\rightarrow \infty$ in the previous lemma. Then we can exchange assumption 2) with the following weaker on:
  \begin{itemize}
   \item[2')] $\|a(x, \xi) -  a(\infty, \xi)\|_{\mathscr{L}(\C^l)}\<{\xi}^{-m} \xrightarrow{|x| \rightarrow \infty} 0$ uniformly in $\xi \in \Rn$.
  \end{itemize}
  More precisely: Due to Remark \ref{bem:SymbolClassOfaInfty} we have $a(\infty, \xi)=\left(a^{ik}(\infty, \xi) \right)_{i,k=1}^l \in  S_{\rho,\delta}^m(\RnRn; n+3;\mathscr{L}(\C^l))$.  Now let $\alpha \in \Non$ with $|\alpha| \leq n+2$ be arbitrary. Additionally let $(\varphi_j)_{j \in \N_0}$ be a dyadic partition of unity defined as in Section \ref{section:Preliminaries}. 
   Hence there are two constants $C_1,C_2>0$ such that for all $j \in \N$
   \begin{align}\label{ee2}
      C_1 2^j \leq \<{\xi} \leq C_2 2^j \qquad \text{for all } \xi \in \supp{\varphi_j}.
   \end{align}
   Now an application of Lemma \ref{lemma:interpolation1} yields for  $\theta:= \frac{|\alpha|}{|\alpha|+1}$ and $$\tilde{a}_j(x,\xi)=\left(\tilde{a_j}^{ik}(\infty, \xi) \right)_{i,k=1}^l:= \left[ a(x,\xi)-a(\infty, \xi)\right]\varphi_j(\xi), \qquad x,\xi \in \Rn$$ the existence of a $C>0$, independent of $j \in \N_0$, such that:
   \begin{align*}
      |\pa{\alpha} \tilde{a}^{ik}_j(x,\xi)| \cdot 2^{-mj+\rho |\alpha|j} 
      \leq C \|\tilde{a}^{ik}_j(x,.)\|^{1-\theta}_{C_b^0(\Rn)} \left(\max_{|\beta|=|\alpha|+1} \|\pa{\beta}\tilde{a}^{ik}_j(x,.)\|_{C_b^0(\Rn)}\right)^{\theta} \cdot 2^{-mj+ \rho|\alpha|j}
   \end{align*}
   for all $i,k=1, \ldots, l$, $j \in \N_0$. Using $2^{-mj+\rho |\alpha|j}=2^{-mj(1-\theta)} \cdot 2^{(-mj+\rho(|\alpha|+1)j)\theta}$ and (\ref{ee2}) provides
   \begin{align}\label{ee3}
      &|\pa{\alpha} \tilde{a}^{ik}_j(x,\xi)| \<{\xi}^{-m+ \rho|\alpha|}  \notag \\
      &\qquad \qquad  \leq C \|\tilde{a}^{ik}_j(x,.)\<{\xi}^{-m}\|^{1-\theta}_{C_b^0(\Rn)} \left(\max_{|\beta|=|\alpha|+1} \|\<{\xi}^{-m+\rho|\beta|}\pa{\beta}\tilde{a}^{ik}_j(x,.)\|_{C_b^0(\Rn)}\right)^{\theta}
   \end{align}
    for all $i,k=1, \ldots, l$, $j \in \N_0$. On account of \eqref{eq:GlattesSymbolIstNichtglatt} we know that $a(\infty, \xi)$ and $a(x,\xi)=\left(a^{ik}(x, \xi) \right)_{i,k=1}^l$ are elements of $ C^{\tilde{m},\tau} S^m_{\rho, \delta}(\RnRn; n+3; \mathscr{L}(\C^l)).$ Together with $\varphi_0 \in C^{\infty}_c(\Rn)$ and (\ref{ee2}) we can show by means of the Leibniz-rule, that
    \begin{align}\label{ee4}
      |\pa{\beta}  \tilde{a}^{ik}_j(x,\xi)| \leq C_{\beta} \<{\xi}^{m-\rho|\beta|} \quad \text{for all } x,\xi \in \Rn, j \in \N_0, |\beta| \leq n+3
    \end{align}
    for all $i,k =1, \ldots, l$. A combination of (\ref{ee3}),(\ref{ee4}) and $2')$ yields
    \begin{align*}
      \|\pa{\alpha} \tilde{a}_j(x,\xi)\|_{\mathscr{L}(\C^l)} \<{\xi}^{-m+ \rho|\alpha|}  \leq C_{\theta} \sup_{\xi \in \Rn} \| \<{\xi}^{-m} \left[ a(x,\xi)-a(\infty, \xi)\right] \|^{1-\theta}_{\mathscr{L}(\C^l)} \xrightarrow{|x|\rightarrow \infty} 0
    \end{align*}
    uniformly in $\xi \in \Rn$ and $j \in \N_0$. Since for each $\xi \in \Rn$ the right side of the equality 
       $a(x,\xi)-a(\infty, \xi)=\sum_{j=0}^{\infty} \tilde{a}_j(x,\xi)$
    consists of not more than two terms which are not equal to $0$ and since the previous convergence is uniform in $j$ and $\xi$, assumption $2)$ of the previous lemma is true.
\end{bem}

In order to verify Lemma \ref{lemma:Lemma3.3}, we show 
\begin{lemma}\label{HilfslemmaFuerLemma3.3}
  Let $l \in \N$, $\tilde{m} \in \N_0$, $0<\tau <1$, $0\leq \rho, \delta \leq 1$, $\delta<1$, $m \in \R$ and $M \in \N_0 \cup \{ \infty \}$ with $M \geq n+ 1$. Additionally let $0<s<\tau$.
  We consider a  symbol $a=(a_{i,j})_{i,j=1}^l \in C^{\tilde{m}, \tau} \tilde{S}_{\rho,\delta}^m(\RnRn; M; \mathscr{L}(\C^l))$ fulfilling the property
\begin{itemize}
    \item[1)] $\|\pa{\alpha} a(x, \xi) -  \pa{\alpha}a(\infty, \xi)\|_{\mathscr{L}(\C^l)}\<{\xi}^{-m+\rho|\alpha|} \xrightarrow{|x| \rightarrow \infty} 0$ uniformly in $\xi \in \Rn$ for all $\alpha \in \Non$ with $|\alpha| \leq n+1$.
  \end{itemize}
Moreover let $\psi \in C^{\infty}(\Rn)$ with $\psi(x)=1$ for $|x|\geq 2$ and $\psi(x) = 0$ if $|x|\leq 1$. Then for all $\e >0$ there is a constant $\hat{R}>1$ such that
	\begin{align*}
		\hat{a}(x,\xi):= a(x,\xi)\psi(\hat{R}^{-1}x) + a(\infty,\xi)(1-\psi(\hat{R}^{-1}x)) \qquad \text{for all } x, \xi \in \Rn
	\end{align*}
	fulfills for all $k \leq n+1$ 
	\begin{align}\label{eq:EstimateOfSymbolaHat}
		|\hat{a}(x, \xi)-a(\infty, \xi)|_{k, C^{\tilde{m},s}S^m_{\rho, \delta}(\RnRn,M; \mathscr{L}(\C^l))} \leq \e.
	\end{align}
\end{lemma}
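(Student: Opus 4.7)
The plan is to exploit the factorization $\hat a(x,\xi) - a(\infty,\xi) = \psi(\hat R^{-1}x)\,b(x,\xi)$ with $b(x,\xi):=a(x,\xi)-a(\infty,\xi)$, and to interpolate between a $C^0_b$-smallness (provided by assumption 1)) and $C^{\tilde m,\tau}$-boundedness (provided by $a\in C^{\tilde m,\tau}\tilde S^m_{\rho,\delta}$ and Remark \ref{bem:SymbolClassOfaInfty}). By Remark \ref{bem:MatrixwertigerHölderRaum}, it suffices to prove the estimate entry-wise, so I would immediately reduce to the scalar case. Since $\psi$ does not depend on $\xi$, $\partial_\xi^\alpha(\hat a - a(\infty,\cdot))(x,\xi) = \psi(\hat R^{-1}x)\,\partial_\xi^\alpha b(x,\xi)$, and since $\psi(\hat R^{-1}x)=0$ whenever $|x|\leq\hat R$, the crucial support property
\[
\sup_{x\in\Rn}|\psi(\hat R^{-1}x)\partial_\xi^\alpha b(x,\xi)|\;\leq\;\|\psi\|_\infty\sup_{|x|\geq\hat R}|\partial_\xi^\alpha b(x,\xi)|
\]
is available. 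Assumption 1) then ensures that for any $\e_0>0$ there exists $\hat R_0\geq 1$ such that for every $\hat R\geq\hat R_0$ and every $|\alpha|\leq n+1$,
\[
\sup_{|x|\geq\hat R}|\partial_\xi^\alpha b(x,\xi)|\<{\xi}^{-m+\rho|\alpha|}\leq\e_0\quad\text{uniformly in }\xi,
\]
which immediately takes care of the $Se_\alpha$ contribution in the seminorm in the case $\delta\neq0$.

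For the $C^{\tilde m,s}$ part I would apply the interpolation inequality
\[
\|f\|_{C^{\tilde m,s}(\Rn)}\;\leq\;C\,\|f\|_{C^0_b(\Rn)}^{1-\theta}\|f\|_{C^{\tilde m,\tau}(\Rn)}^{\theta},\qquad\theta:=\frac{\tilde m+s}{\tilde m+\tau}<1,
\]
(as cited from Lunardi/Triebel in the proof of Lemma \ref{lemma:KonvergenzFürDeltaUngleich0}) to $f:=\psi(\hat R^{-1}\cdot)\partial_\xi^\alpha b(\cdot,\xi)$. The $C^0_b$-factor is bounded by $\|\psi\|_\infty\e_0\<\xi\>^{m-\rho|\alpha|}$ from the previous display. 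For the $C^{\tilde m,\tau}$-factor, I would combine Lemma \ref{lemma:PropertyHoelderSpaces} with the uniform bound $\|\psi(\hat R^{-1}\cdot)\|_{C^{\tilde m,\tau}(\Rn)}\leq\|\psi\|_{C^{\tilde m,\tau}}$ (valid for $\hat R\geq 1$ by the scaling $\partial^\beta[\psi(\hat R^{-1}\cdot)]=\hat R^{-|\beta|}(\partial^\beta\psi)(\hat R^{-1}\cdot)$) and the bound $\|\partial_\xi^\alpha b(\cdot,\xi)\|_{C^{\tilde m,\tau}}\leq C\<\xi\>^{m-\rho|\alpha|+\delta(\tilde m+\tau)}$ coming from the symbol class. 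Putting these together gives
\[
\|f\|_{C^{\tilde m,s}(\Rn)}\leq C\e_0^{1-\theta}\<\xi\>^{(m-\rho|\alpha|)(1-\theta)+(m-\rho|\alpha|+\delta(\tilde m+\tau))\theta}=C\e_0^{1-\theta}\<\xi\>^{m-\rho|\alpha|+\delta(\tilde m+s)},
\]
where the last equality uses $\delta(\tilde m+\tau)\theta=\delta(\tilde m+s)$. After multiplication by the correct weight $\<\xi\>^{-m+\rho|\alpha|-\delta(\tilde m+s)}$, this contributes at most $C\e_0^{1-\theta}$ uniformly in $\xi$ and $|\alpha|\leq n+1$.

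Taking the maximum over $|\alpha|\leq k\leq n+1$ and, in the entry-wise passage, over $i,j\in\{1,\ldots,l\}$, both the $C^{\tilde m,s}$-part and the $Se_\alpha$-part of the seminorm are dominated by a constant multiple of $\e_0^{1-\theta}$. Choosing $\e_0$ sufficiently small in terms of $\e$, $k$, $l$, $\|\psi\|_{C^{\tilde m,\tau}}$, the interpolation constant and the $C^{\tilde m,\tau}$-seminorm of $b(\cdot,\xi)\<\xi\>^{-m+\rho|\alpha|-\delta(\tilde m+\tau)}$, the resulting $\hat R:=\hat R_0$ yields \eqref{eq:EstimateOfSymbolaHat}. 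The main obstacle is purely bookkeeping: verifying the exponent identity $\delta(\tilde m+\tau)\theta=\delta(\tilde m+s)$ and the scaling invariance of $\|\psi(\hat R^{-1}\cdot)\|_{C^{\tilde m,\tau}}$; no genuinely delicate argument seems required beyond these.
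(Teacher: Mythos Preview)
Your argument is correct and is in fact cleaner than the paper's. Both proofs rest on the same idea—interpolate between the $C^0_b$-smallness of $\psi(\hat R^{-1}\cdot)\partial_\xi^\alpha b(\cdot,\xi)$ coming from assumption 1) and its uniform $C^{\tilde m,\tau}$-boundedness—but they organize the interpolation differently. The paper works on the exterior domain $\Rn\setminus B_{\hat R}(0)$, invokes the two-term interpolation inequality of Lemma \ref{lemma:interpolation3} for each $|\beta|\leq\tilde m$ separately, and then assembles estimates \eqref{eq:HelpEstimateOfSymbolaHat3}--\eqref{eq:HelpEstimateOfSymbolaHat6}. You instead stay on the whole space, apply the single global interpolation inequality $\|f\|_{C^{\tilde m,s}}\leq C\|f\|_{C^0_b}^{1-\theta}\|f\|_{C^{\tilde m,\tau}}^{\theta}$ with $\theta=(\tilde m+s)/(\tilde m+\tau)$ (exactly the one quoted in the proof of Lemma \ref{lemma:KonvergenzFürDeltaUngleich0}), and control the $C^{\tilde m,\tau}$-norm of the product via Lemma \ref{lemma:PropertyHoelderSpaces} together with the scaling observation $\|\psi(\hat R^{-1}\cdot)\|_{C^{\tilde m,\tau}}\leq\|\psi\|_{C^{\tilde m,\tau}}$ for $\hat R\geq 1$. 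The exponent identity $\delta(\tilde m+\tau)\theta=\delta(\tilde m+s)$ is what makes the weights match, and it does hold. Your route bypasses the exterior-domain interpolation Lemmas \ref{lemma:interpolation1}--\ref{lemma:interpolation3} entirely; the paper's route, in turn, does not need the product estimate in $C^{\tilde m,\tau}$ nor the scaling property of the cutoff.
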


\begin{proof}
	First we assume $l=1$. Let $k \leq n+1$, $0<s<\tau$ and $\e >0$ be arbitrary but fixed. Without loss of generality we can assume, that $0\leq \psi \leq 1$.
	Assumption $1)$ provides the existence of an $R_1>1$ such that we have for  all $\alpha \in \Non$ with $|\alpha| \leq k$:
	\begin{align*}
	  |\pa{\alpha}\{a(x,\xi)-a(\infty,\xi)\}| \<{\xi}^{-m+\rho|\alpha|} \leq \frac{\e}{2}  
\qquad \text{ for all } x,\xi \in \Rn \text{ with } |x|\geq R_1.
	\end{align*}
	By means of the previous inequality, the Leibniz rule and $\psi(R_1^{-1}x)=0$ for all $|x|\leq R_1$ we get for all $\hat{R}\geq R_1$ and all  $\alpha \in \Non$ with $|\alpha| \leq k$:
	\begin{align}\label{eq:HelpEstimateOfSymbolaHat1}
	  &\|\pa{\alpha} \hat{a}(.,\xi) - \pa{\alpha} a(\infty, \xi )\|_{C^0_b(\Rn)} \<{\xi}^{-m+\rho|\alpha|} 
	  \leq \frac{\e}{2}  \qquad \text{for all } \xi \in \Rn.
	\end{align}
	A combination of \eqref{eq:HelpEstimateOfSymbolaHat1} and the next estimate yields \eqref{eq:EstimateOfSymbolaHat}:
	\begin{align}\label{eq:HelpEstimateOfSymbolaHat2}
	  \max_{|\alpha| \leq k} \sup_{\xi \in \Rn} \left\{ \|\pa{\alpha} \hat{a}(.,\xi)-\pa{\alpha} a(\infty, \xi)\|_{C^{\tilde{m},s}(\Rn)} \<{\xi}^{-m+\rho|\alpha|-\delta(\tilde{m}+s)}\right\} \leq \frac{\e}{2}.
	\end{align}
	In order to show \eqref{eq:HelpEstimateOfSymbolaHat2}  let $\alpha, \beta \in \Non$ with $|\alpha|\leq k$ and $|\beta| \leq \tilde{m}$ and $R_2>1$ be arbitrary. 
	On account of $\<{\xi}^{-\delta(\tilde{m}+s)(1-|\beta|/\tilde{m})}\leq 1$ we have
	\begin{align*}
	  &\|\{\pa{\alpha} a(x,\xi) -\pa{\alpha}a(\infty, \xi)\}\psi(R_2^{-1}x)\|^{|\beta|/\tilde{m}}_{C^{\tilde{m}}_b(\Rn \backslash B_{R_2}(0))} \<{\xi}^{(-m+\rho|\alpha|)\cdot |\beta|/\tilde{m}-\delta(\tilde{m}+s)}\\
	  &\qquad \leq \left( \|\{\pa{\alpha} a(x,\xi) -\pa{\alpha}a(\infty, \xi)\}\psi(R_2^{-1}x)\|_{C^{\tilde{m}}_b(\Rn \backslash B_{R_2}(0))} \<{\xi}^{-m+\rho|\alpha|-\delta(\tilde{m}+s)}  \right)^{|\beta|/\tilde{m}}.
	\end{align*}
	Now we apply Lemma \ref{lemma:PropertyHoelderSpaces} to the previous inequality first and use the properties of the function $\psi$ afterwards. Then we get the existence of a constant $C_{\tilde{m}}$, independent of $R_2>1$, such that 
	\begin{align*}
	  &\|\{\pa{\alpha} a(x,\xi) -\pa{\alpha}a(\infty, \xi)\}\psi(R_2^{-1}x)\|^{|\beta|/\tilde{m}}_{C^{\tilde{m}}_b(\Rn_x \backslash B_{R_2}(0))} \<{\xi}^{(-m+\rho|\alpha|)\cdot |\beta|/\tilde{m}-\delta(\tilde{m}+s)}\\
	  &\qquad \leq C_{\tilde{m}} \left( \|\pa{\alpha} a(x,\xi) -\pa{\alpha}a(\infty, \xi)\|_{C^{\tilde{m},s}(\Rn_x \backslash B_{R_2}(0))} \<{\xi}^{-m+\rho|\alpha|-\delta(\tilde{m}+s)} \right)^{|\beta|/\tilde{m}}.
	\end{align*}
	Since the restriction of functions defined on  $\Rn$ to the domain $\Rn \backslash B_{R_2}(0))$ is continuous and since $a(x,\xi), a(\infty, \xi) \in C^{\tilde{m},\tau}S^m_{\rho, \delta}(\RnRn; n+1) \subseteq C^{\tilde{m},s}S^m_{\rho, \delta}(\RnRn; n+1)$ we obtain for all $\xi \in \Rn$
	\begin{align}\label{eq:HelpEstimateOfSymbolaHat3}
	  \|\{\pa{\alpha} a(x,\xi) -\pa{\alpha}a(\infty, \xi)\}\psi(R_2^{-1}x)\|^{|\beta|/\tilde{m}}_{C^{\tilde{m}}_b(\Rn_x \backslash B_{R_2}(0))} \<{\xi}^{(-m+\rho|\alpha|)\cdot |\beta|/\tilde{m}-\delta(\tilde{m}+s)} \leq C_{\tilde{m}},
	\end{align}
	where $C_{\tilde{m}}$ is independent of $R_2>1$ and $\xi \in \Rn$. Let $\theta_{\beta}:= \frac{|\beta|+s}{\tilde{m}+\tau}$.
	Similarly to \eqref{eq:HelpEstimateOfSymbolaHat3} we can show the existence of a constant $B_{\tilde{m}}$, independent of $R_2>1$, such that we have for all $\xi \in \Rn$
	\begin{align}\label{eq:HelpEstimateOfSymbolaHat4}
	  \|\{\pa{\alpha} a(x,\xi) -\pa{\alpha}a(\infty, \xi)\}\psi(R_2^{-1}x)\|^{\theta_{\beta}}_{C^{\tilde{m}, \tau}(\Rn_x \backslash B_{R_2}(0))} \<{\xi}^{(-m+\rho|\alpha|)\cdot \theta_{\beta}-\delta(\tilde{m}+s)} \leq B_{\tilde{m}}.
	\end{align}
Now let $C_{|\beta|, \tilde{m}}$ and $C_{\theta_{\beta}}$ be the constants of Lemma \ref{lemma:interpolation3}.
	Due to $\psi \in C^{\infty}_b(\Rn)$ and assumption $1)$ there is a $R_3>1$ such that we get for all $\hat{R}\geq R_3$
	\begin{align}\label{eq:HelpEstimateOfSymbolaHat5}
	  &\|\{\pa{\alpha} a(x,\xi) -\pa{\alpha}a(\infty, \xi)\}\psi(\hat{R}^{-1}x)\|^{1-|\beta|/\tilde{m}}_{C^0_b(\Rn_x \backslash B_{\hat{R}}(0))} \<{\xi}^{(-m+\rho|\alpha|)(1-|\beta|/\tilde{m})} \leq \frac{\e}{4C_{\tilde{m}}\cdot C_{|\beta|, \tilde{m}}  } \\ \label{eq:HelpEstimateOfSymbolaHat6}
	&\hspace{-5mm}\text{and} \notag\\
	  &\|\{\pa{\alpha} a(x,\xi) -\pa{\alpha}a(\infty, \xi)\}\psi(\hat{R}^{-1}x)\|^{1-\theta_{\beta}}_{C^0_b(\Rn_x \backslash B_{\hat{R}}(0))} \<{\xi}^{(-m+\rho|\alpha|)(1-\theta_{\beta})} \leq \frac{\e}{4B_{\tilde{m}}
	\cdot C_{\theta_{\beta}}
}.
	\end{align}
	Now we choose $\hat{R}:=\max\{R_2, R_3\}$. Since $\psi(\hat{R}^{-1}x)=0$ for all $|x| \leq \hat{R}$, we have
	\begin{align*}
	  &\|\pa{\alpha} \hat{a}(.,\xi) -\pa{\alpha}a(\infty, \xi)\|_{C^{\tilde{m},s}(\Rn)} \<{\xi}^{-m+\rho|\alpha|-\delta(\tilde{m}+s)}\\
	  &\qquad = \|\{ \pa{\alpha} a(.,\xi) -\pa{\alpha}a(\infty, \xi)\} \psi(\hat{R}^{-1}x)\|_{C^{\tilde{m},s}(\Rn)} \<{\xi}^{-m+\rho|\alpha|-\delta(\tilde{m}+s)}\\
	  &\qquad = \max_{|\beta| \leq \tilde{m}} \left\{ \|D_x^{\beta} \{( \pa{\alpha} a(.,\xi) -\pa{\alpha}a(\infty, \xi)) \psi(\hat{R}^{-1}x)\}\|_{C^{0,s}(\Rn\backslash B_{\hat{R}}(0))} \<{\xi}^{-m+\rho|\alpha|-\delta(\tilde{m}+s)} \right\}.
	\end{align*}
	If we now apply Lemma \ref{lemma:interpolation3} on the previous equality first and use the estimates \eqref{eq:HelpEstimateOfSymbolaHat3}-\eqref{eq:HelpEstimateOfSymbolaHat6} with $R_2:=\hat{R}$ afterwards, we obtain \eqref{eq:HelpEstimateOfSymbolaHat2} for $l=1$.
	We get the general case by applying case $l=1$ to each entry of $\hat{a}$.
\end{proof}

Now we have all results at hand in order to show the regularity result of Lemma \ref{lemma:Lemma3.3}. By means of symbol-smoothing and the continuity results for non-smooth pseudodifferential operators we gain some regularity of small order. On account of this auxiliary regularity result and the embeddings of Bessel potential spaces, we prove the lemma with the aid of a bootstrap argument:

\begin{proof}[Proof of Lemma \ref{lemma:Lemma3.3}:]
	Let $(1-\rho)\frac{n}{p}-(1-\delta)(\tilde{m}+\tau)<s<\tilde{m}+\tau$ and $r \in \R$ with $r<\tilde{m}+\tau$ be arbitrary.  We choose an arbitrary but fixed $\psi \in C^{\infty}(\Rn)$ with $\psi(x)=1$ for $|x| \geq 2$ and $\psi(x)=0$ if $|x|\leq 1$. We divide the proof of this lemma into four different steps. In step one we prove an auxiliary regularity result, needed for the proof of the claim. Afterwards we start with showing the claim by considering different cases. First we treat the case $p=q$ in step two. In the next two steps we consider an arbitrary $p \in [1,\infty)$ and $\rho=1$. In step three we show the claim for $q \in [1,p]$. Finally we treat case $q \in [p,\infty)$ in step 4. 

 \textbf{Step 1. } 
Let $s \leq t < \tilde{m}+\tau$. Additionally let $q_0=2$ if $\rho \neq 1$ and $q_0 \in [1,\infty)$ else. We choose $\delta < \gamma < \rho$ such that 
	\begin{align*}
-(1-\delta)(\tilde{m}+\tau) < 
-(1-\gamma)(\tilde{m}+\tau) <s
	\end{align*} 
	and  define $b:\RnRn \rightarrow \C$ via
  \begin{align*}
    b(x, \xi):= \psi\left( R^{-1}(|x|^2 + |\xi|^2) \right) \tilde{a}(x, \xi)^{-1} \qquad \text{for all }x,\xi \in \Rn,
  \end{align*}
  where $\tilde{a}(x, \xi):= a(x, \xi)\<{\xi}^{-m} \in C^{\tilde{m}, \tau}\tilde{S}^0_{\rho, \delta}(\RnRn; M; \mathscr{L}(\C^l))$. On account of Lemma \ref{lemma:HilfslemmaFuerBeweisDerFredholmproperty} we already know, that $b \in C^{\tilde{m}, \tau}\tilde{S}^0_{\rho, \delta}(\RnRn; M; \mathscr{L}(\C^l))$. 
  Additionally we get due to the Leibniz rule and Lemma \ref{lemma:PropertyHoelderSpaces}, that 
  \begin{align}\label{62e}
    b(x, \xi)\tilde{a}(x,\xi)-\mathbbm{1} \in C^{\tilde{m}, \tau}S^0_{\rho, \delta}(\RnRn; M; \mathscr{L}(\C^l)),
  \end{align}
  where $\mathbbm{1} \in \C^{l\times l}$ is the unit-matrix.  Since $b(x, \xi) \tilde{a}(x,\xi)-\mathbbm{1}=0$ for all $x,\xi \in \Rn$ with $|x|^2 + |\xi|^2 \geq 2R$, a straight forward calculation even provides using (\ref{62e}) and $\<{\xi} \leq C_R$ for all $|\xi| \leq 2R$
  \begin{align}\label{63e}
    b(x, \xi) \tilde{a}(x,\xi)-\mathbbm{1} &\in C^{\tilde{m}, \tau}S^{-\infty}_{\rho, \delta}(\RnRn; M; \mathscr{L}(\C^l))\notag \\
	&\subseteq  C^{\tilde{m}, \tau}S^{-\infty}_{\rho, \gamma}(\RnRn; M; \mathscr{L}(\C^l)).
  \end{align}
  By means of the symbol-smoothing  we can split the symbol $\tilde{a}$ in two symbols $\tilde{a}^{\sharp}:=(a^{\sharp}_{i,j})_{i,j=1}^l$ and $\tilde{a}^b:=(a^{b}_{i,j})_{i,j=1}^l$ defined as in Subsection \ref{Subsection:SymbolSmoothing}. Due to Lemma \ref{lemma:SymbolSmoothing1} and Lemma \ref{lemma:SymbolSmoothing2} we then have   
  \begin{itemize}
    \item[i)] $\tilde{a}^b \in C^{\tilde{m}, \tau} \tilde{S}_{\rho, \gamma}^{-\theta}(\RnRn, M; \mathscr{L}(\C^l)) \cap C^{\tilde{m}, \tau} \dot{S}_{\rho, \gamma}^{-\theta}(\RnRn; 0; \mathscr{L}(\C^l)) $,
    \item[ii)] $\tilde{a}^{\sharp} \in \tilde{S}^{0}_{\rho, \gamma}(\RnRn, M; \mathscr{L}(\C^l))$.
  \end{itemize}
for some $0< \theta < 1$. Since $\tilde{a}^{\sharp}$ is  smooth with respect to the first variable, the composition $b(x, D_x) \tilde{a}^{\sharp}(x, D_x)$ is a pseudodifferential operator again with symbol $b \sharp \tilde{a}^{\sharp}$.
Then Corollary \ref{kor:Eigenschaft2} provides that the symbol $b \sharp_{\lceil \theta \rceil} \tilde{a}^{\sharp} $ 
  has the following property:
  \begin{itemize}
    \item[iii)] $b \sharp \tilde{a}^{\sharp} -  b \sharp_{\lceil \theta \rceil} \tilde{a}^{\sharp} \in  C^{\tilde{m}, \tau} \dot{S}^{-(\rho-\gamma)}_{\rho, \gamma}(\RnRn; M-(n+2);  \mathscr{L}(\C^l))$.
  \end{itemize} 
  Using $\tilde{a}=\tilde{a}^b+\tilde{a}^{\sharp}$  and $iii)$ provides
  \begin{align}\label{eq38ee}
    &b(x,D_x) \tilde{a}(x,D_x) - \left( b \sharp_{\lceil \theta \rceil} \tilde{a} \right)(x,D_x) \notag\\ 
    &\qquad = b(x,D_x)\tilde{a}^b(x,D_x) + \left( b \sharp_{\lceil \theta \rceil} \tilde{a}^{\sharp} \right)(x,D_x)  - \left( b \sharp_{\lceil \theta \rceil} \tilde{a} \right)(x,D_x) + R_{\lceil \theta \rceil }(x, D_x) \notag \\
    &\qquad =b(x,D_x)\tilde{a}^b(x,D_x) - ( b  \tilde{a}^b)(x,D_x)  + R_{\lceil \theta \rceil }(x, D_x),
  \end{align}
  where
  \begin{align}\label{61ee}
    R_{\lceil \theta \rceil } \in C^{\tilde{m}, \tau} \dot{S}_{\rho, \gamma}^{-(\rho-\gamma)} (\RnRn; M-(n+2); \mathscr{L}(\C^l)).
  \end{align}
  By means of the Leibniz rule,  Lemma \ref{lemma:PropertyHoelderSpaces} and $ \tilde{a}^b \in C^{\tilde{m}, \tau} \tilde{S}_{\rho, \gamma}^{-\theta}(\RnRn; M; \mathscr{L}(\C^l))$ we obtain:
  \begin{align}\label{60ee}
     b(x, \xi) \tilde{a}^b(x,\xi) \in C^{\tilde{m}, \tau} S^{-\theta}_{\rho, \gamma}(\RnRn; M; \mathscr{L}(\C^l)).
  \end{align}
  An application of Theorem \ref{thm:BoundednessResultNonSmooth} yields because of $i)$, $b \in C^{\tilde{m}, \tau}\tilde{S}^0_{\rho, \gamma}(\RnRn; M; \mathscr{L}(\C^l))$, (\ref{60ee}), (\ref{61ee}), (\ref{eq38ee}) and (\ref{63e}) for  $m_n:=\min\{\theta; \rho-\gamma; (\tilde{m}+\tau-t)/2 \}$:
  \begin{itemize}
    \item[iv)] $R:= b(x,D_x) \tilde{a}(x,D_x) - \left( b \sharp_{\lceil \theta \rceil} \tilde{a} \right)(x,D_x): \left( H^t_{q_0}(\Rn) \right)^l \rightarrow \left( H^{t+m_n}_{q_0}(\Rn) \right)^l$,
    \item[v)] $\op\left[ b(x, \xi) \tilde{a}(x, \xi) -\mathbbm{1} \right]: \left(H^t_{q_0}(\Rn) \right)^l \rightarrow \left( H^{t+m_n}_{q_0}(\Rn) \right)^l$.
  \end{itemize}
  Moreover we obtain for the parametrix $Q:= \<{D_x}^{-m} b(x, D_x)$ of $a(x, D_x)$:
  \begin{align}\label{65e}
    Q a(x, D_x)
    &= \<{D_x}^{-m} b(x, D_x) \tilde{a}(x, D_x) \<{D_x}^{m} \notag \\
    &= \id + \<{D_x}^{-m} \op\left[  b(x, \xi) \tilde{a}(x, \xi) -\mathbbm{1}  \right]  \<{D_x}^{m} + \<{D_x}^{-m} R \<{D_x}^{m}.
  \end{align}
  Now let $u \in \left( H^{m+t}_{q_0}(\Rn) \right)^l$ with $a(x, D_x)u = \Phi \in \left( C^{\infty}_c(\Rn) \right)^l \subseteq \left( H^{t+m_n}_{q_0}(\Rn) \right)^l$ be arbitrary. By means of the continuity results for pseudodifferential operators, cf.\  Theorem \ref{thm:BoundednessResultNonSmooth} and Theorem \ref{thm:ConituityResultSmoothCase}, we obtain $Q a(x, D_x)u = Q \Phi \in \left( H^{m+m_n+t}_{q_0}(\Rn) \right)^l$. Together with $iv)$ and $v)$ and equality (\ref{65e}) we therefore get that
  \begin{align*}
    u = Qa(x, D_x)u - \<{D_x}^{-m} \op\left[  b(x, \xi) \tilde{a}(x, \xi) -\mathbbm{1}  \right]  \<{D_x}^{m}u - \<{D_x}^{-m} R \<{D_x}^{m}u 
  \end{align*}
  is an element of $\left( H^{m+m_n+t}_{q_0}(\Rn) \right)^l$. In summary, this provides
\begin{align}\label{eq:FootStep}
	\text{if } u \in \left( H^{m+t}_{q_0}(\Rn) \right)^l \text{ and } a(x, D_x)u  \in \left( C^{\infty}_c(\Rn) \right)^l,\text{ then } u \in \left( H^{m+m_n+t}_{q_0}(\Rn) \right)^l.
\end{align}	

\textbf{Step 2.}
In this step we prove the claim for $p=q$. If $r\leq s$ we immediately obtain $u \in \left( H^{m+s}_p(\Rn) \right)^l \subseteq \left( H^{m+r}_p(\Rn) \right)^l$. Hence it remains to consider $r \in \R$ with $s<r<\tilde{m}+\tau$. To this end we consider a  $u \in \left( H^{m+s}_p(\Rn) \right)^l$ such that $ a(x, D_x)u  \in \left( C^{\infty}_c(\Rn) \right)^l$. Due to \eqref{eq:FootStep} for $t=s$ and $q_0=p$ we obtain $u \in \left( H^{m+s_1}_p(\Rn) \right)^l$ where $s_1:= s + \min\{\theta; \rho-\gamma; (\tilde{m}+\tau-s)/2 \}$. Now we distinguish two different cases. If $s_1<r$, we repeat the application of \eqref{eq:FootStep} applied on  $t=s_1$ and $q_0=p$ and get $u \in \left( H^{m+s_2}_p(\Rn) \right)^l$ where $s_2:= s_1 + \min\{\theta; \rho-\gamma; (\tilde{m}+\tau-s_1)/2 \}$. We repeat this step until we obtain $s_i \geq r$ for some $i \in \N$. This is possible since $r<\tilde{m}+\tau$. If $s_1 \geq r $ instead, we obtain the claim, since $u \in \left( H^{m+s_1}_p(\Rn) \right)^l \subseteq \left( H^{m+r}_p(\Rn) \right)^l$.  

\textbf{Step 3.} Now we assume $\rho=1$. The aim of this step is to show the claim for $q \in [1,p]$ and arbitrary $r$ fulfilling the assumptions.  Without loss of generality we can assume  on account of Step 2  that $r=s$.  
Due to Remark \ref{bem:SymbolClassOfaInfty} we have $a(\infty, \xi) \in  S_{\rho,\delta}^m(\RnRn; n+1; \mathscr{L}(\C^l))$. 
  On account of \eqref{eq:GlattesSymbolIstNichtglatt} and Theorem \ref{thm:BoundednessResultNonSmooth} we obtain the boundedness of 
  \begin{align}\label{ee5}
    a(\infty, D_x):\left(H^{m+s}_{p_1}(\Rn)\right)^l \rightarrow \left( H^{s}_{p_1}(\Rn) \right)^l \qquad \text{for all } p_1 \in [1,\infty).
  \end{align}
  Assumption $1)$ implies $|\det \left( a(\infty,\xi) \right)| \neq 0$ for all $\xi \in \Rn$. Hence $a(\infty, D_x)$ is invertible with inverse $b(D_x)$, where
  \begin{align}\label{eq:InverseOfaInfty}
    b(\xi):=[a(\infty,\xi)]^{-1} \in S^{-m}_{\rho, \delta}(\RnRn;0; \mathscr{L}(\C^l))
  \end{align}
  follows immediately of $1)$ for $l=1$.  For general $l \in \N$ the property \eqref{eq:InverseOfaInfty} follows from Cramer's rule, case $l=1$ and the fact, that $C^{\tilde{m}, \tau}\tilde{S}^0_{\rho, \delta}(\RnRn; M)$ is closed with respect to pointwise multiplication. Consequently (\ref{ee5}) provides 
  \begin{align*}
    b(D_x) \in \mathscr{L}(\left(H^{s}_{p_1}(\Rn)\right)^l; \left(H^{m+s}_{p_1}(\Rn) \right)^l ) \qquad \text{for all } p_1 \in [1,\infty).
  \end{align*}
  Since the set of all invertible operators in $\mathscr{L}(\left(H^{m+s}_{p_1}(\Rn) \right)^l; \left(H^{s}_{p_1}(\Rn) \right)^l)$ with $p_1 \in \{p,q\}$ is open, there is an $\tilde{R}>0$ such that all $T_{p_1} \in \mathscr{L}(\left(H^{m+s}_{p_1}(\Rn) \right)^l ;\left( H^{s}_{p_1}(\Rn) \right)^l )$ with $p_1 \in \{p,q\}$ fulfilling
  \begin{align}\label{eq:hilfsformelInvertierbarkeit1}
   \|a(\infty, D_x)-T_{p_1}\|_{ \mathscr{L}(\left( H^{m+s}_{p_1}(\Rn) \right)^l ;\left( H^{s}_{p_1}(\Rn) \right)^l)} \leq \tilde{R} \qquad \text{for all } p_1 \in \{p,q\}
  \end{align}
  are invertible in $\mathscr{L}(\left( H^{m+s}_{p_1}(\Rn) \right)^l; \left( H^s_{p_1}(\Rn) \right)^l)$ with  $p_1 \in \{p,q\}$.  Due to  $a(\infty, \xi) \in S^m_{\rho, \delta}(\RnRn;n+1; \mathscr{L}(\C^l))$ and Theorem \ref{thm:BoundednessResultNonSmooth}  inequality \eqref{eq:hilfsformelInvertierbarkeit1} holds for $T_q=T_p=\hat{a}(x,D_x)$ with $\hat{a} \in C^{\tilde{m}, \tilde{\tau}}S^m_{\rho, \delta}(\RnRn; M; \mathscr{L}(\C^l))$, $\tilde{\tau}>0$, fulfilling
\begin{align}\label{eq1Lemma33}
	|\hat{a}-a(\infty, \xi)|_{k,  C^{\tilde{m}, \tilde{\tau}}S^m_{1, \delta}(\RnRn, M; \mathscr{L}(\C^l))}<\e
\end{align}
	for some $\e>0$.
	  On account of Lemma \ref{HilfslemmaFuerLemma3.3} there is an $\hat{R}>1$ such that
	\begin{align*}
		\hat{a}(x,\xi):= a(x,\xi)\psi(\hat{R}^{-1}x) + (1-\psi(\hat{R}^{-1}x)) a(\infty, \xi) \qquad \text{for all } x, \xi \in \Rn
	\end{align*} 
	fulfills \eqref{eq1Lemma33} for $\tilde{\tau}>0$ with $s< \tilde{\tau}+\tilde{m}<\tau +\tilde{m}$. Hence  inequality \eqref{eq:hilfsformelInvertierbarkeit1} holds for $T_q=T_p=\hat{a}(x,D_x)$. Consequently   $\hat{a}(x,D_x)$ is invertible as a map from $\left(H^{m+s}_q(\Rn) \right)^l$ to $\left( H^s_q(\Rn) \right)^l$. Now we choose $u \in \left( H^{m+s}_p(\Rn) \right)^l$ such that $a(x,D_x)u=\Phi \in \left( C^{\infty}_c(\Rn) \right)^l$. The continuity of  a pseudodifferential operator with symbol in $ S^m_{\rho,\delta}(\RnRn;n+1; \mathscr{L}(\C^l))$ and $1-\psi(\hat{R}^{-1}x) \in C^{\infty}_c(\Rn)$ with support in $\overline{B_{2\hat{R}}(0)}$ yields 
	\begin{align}\label{eq6:Lemma3.3}
		 (1-\psi(\hat{R}^{-1}x)) a(\infty, D_x)u \in \left( H^s_p(\overline{B_{2\hat{R}}(0)}) \right)^l \subseteq \left( H^s_q(\overline{B_{2\hat{R}}(0)}) \right)^l.
	\end{align}
 By means of
\eqref{eq6:Lemma3.3} we get using $a(x,D_x)u=\Phi $:
\begin{align*}
	\hat{a}(x,D_x)u= \Phi - (1-\psi)(\hat{R}^{-1}x) a(x,D_x) u + (1-\psi(\hat{R}^{-1}x))a(\infty, D_x)u \in \left( H^s_q(\Rn) \right)^l.
\end{align*} 
Since $\hat{a}(x,D_x) $ is invertible as a map from $\left( H^{m+s}_q(\Rn) \right)^l$ to $\left( H^s_q(\Rn) \right)^l$ and $\hat{a}(x,D_x)u \in \left( H^s_q(\Rn) \right)^l$, we  obtain $u \in \left( H^{m+s}_q(\Rn) \right)^l$.

\textbf{Step 4.} We again assume $\rho =1$. In this step we proof the claim for all $q \in [p,\infty)$ and arbitrary $r$ fulfilling the assumptions. Due to Step 2 we can assume without loss of generality that $r=s$. We define $m_n$ as in Step 2 for $t=s$.  To this end we consider a  $u \in \left( H^{m+s}_p(\Rn) \right)^l$ such that $ a(x, D_x)u  \in \left( C^{\infty}_c(\Rn) \right)^l$. An application of \eqref{eq:FootStep} for $t=s$ and $q_0=p$ provides $u \in \left( H^{m+m_n+ s}_p(\Rn) \right)^l$.
Now we consider two different cases. 

In the case $\frac{m_n}{n} \geq \frac{1}{p}$ we know that $m+s-\frac{n}{q}<m+s+m_n-\frac{n}{p}$ holds for all
 $q \in [p, \infty)$. The embedding theorem for Bessel potential spaces, see e.g.\  \cite[Theorem 1.2]{Meyries},  therefore provides
  \begin{align*}
    u \in \left( H^{m+m_n+s}_p(\Rn) \right)^l \subseteq \left( H^{m+s}_q(\Rn) \right)^l \qquad \text{for all }  q \in [p, \infty). 
  \end{align*}

  In case $\frac{m_n}{n} < \frac{1}{p}$ we define $p_1 \in (p, \infty)$ by $\frac{1}{p_1} = \frac{1}{p}-\frac{m_n}{n}$. As in the first case we can apply the embedding theorem for Bessel potential spaces, see e.g. \cite[Theorem 6.5.1]{BerghLofstrom} and get that
  \begin{align*}
    u \in \left( H^{m+m_n+s}_p(\Rn) \right)^l \subseteq \left( H^{m+s}_{q}(\Rn) \right)^l 
  \end{align*}
for all $q \in [p,p_1]$.
  An application of \eqref{eq:FootStep} for $t=s$ and $q_0=p_1$ provides $u \in \left( H^{m+m_n+ s}_{p_1}(\Rn) \right)^l$. Since  $\frac{m_n}{n}>0$ is independent of $p$ we  obtain $\frac{m_n}{n} \geq \frac{1}{p_i}$, $i \in \N$ if we repeat the argument of this case finitely many times.  This implies 
  \begin{align}\label{eqFootStep2}
    u \in \left( H^{m+s}_{q}(\Rn) \right)^l  \qquad \text{for all $q \in [p,p_i]$.}
  \end{align}
Due to  $\frac{m_n}{n} \geq \frac{1}{p_i}$ we obtain by means of the embedding theorem for Bessel potential spaces, that $u \in  \left( H^{m+m_n+s}_{p_i}(\Rn) \right)^l \subseteq \left( H^{m+s}_{q}(\Rn) \right)^l $ for all $q \in [p_i, \infty)$. Together with \ref{eqFootStep2} we obtain the claim. 
\end{proof}

In order to prove the invariance of the Fredholm index of a non-smooth pseudodifferential operator under suitable conditions, we additionally need the next lemma, proved by Rabier in \cite[Lemma 3.4]{Rabier}:

\begin{lemma}\label{lemma:LemmaEstimateOfTheCodimension}
  Let $Y$ be a Banach space and $Z \subseteq Y$ be a closed subspace. We assume, that $D \subseteq Y$ is a dense subspace and that there are $d_1, \ldots, d_k \in D$ with the following property: For every $w \in D$,  there are scalars $\mu_1, \ldots, \mu_k$ such that $w-\sum_{i=1}^k \mu_i d_i \in Z$. Then we have $\codim Z \leq k$. 
\end{lemma}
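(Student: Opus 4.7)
The plan is to pass to the quotient space $Y/Z$ and exploit that all the relevant information survives there. Since $Z$ is closed in $Y$, the canonical projection $\pi : Y \to Y/Z$ is a continuous (and open) linear map onto a normed space, and $\codim Z = \dim(Y/Z)$. My target is therefore $\dim(Y/Z) \leq k$.

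First I would reformulate the hypothesis on $d_1,\dots,d_k$ in $Y/Z$. For every $w\in D$ there exist scalars $\mu_1,\dots,\mu_k$ with $w - \sum_{i=1}^k \mu_i d_i \in Z = \ker \pi$. Applying $\pi$ gives $\pi(w) = \sum_{i=1}^k \mu_i \pi(d_i)$, hence
\begin{align*}
  \pi(D) \subseteq V := \Span\{\pi(d_1),\dots,\pi(d_k)\}, \qquad \dim V \leq k.
\end{align*}

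Second, I would combine this with a density/closedness argument. Because $D$ is dense in $Y$ and $\pi$ is continuous and surjective, $\pi(D)$ is dense in $Y/Z$. On the other hand, $V$ is finite-dimensional, and every finite-dimensional subspace of a normed space is closed; hence $V$ is closed in $Y/Z$. Therefore
\begin{align*}
  Y/Z = \overline{\pi(D)} \subseteq \overline{V} = V,
\end{align*}
which yields $\dim(Y/Z) \leq \dim V \leq k$, i.e.\ $\codim Z \leq k$.

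There is no serious obstacle here; the argument is essentially a one-line reduction to the standard fact that a dense subset contained in a finite-dimensional (hence closed) subspace must exhaust the ambient space. The only place where the hypotheses enter nontrivially is closedness of $Z$ (needed to make $Y/Z$ a normed space on which $\pi$ is continuous) and, implicitly, the automatic closedness of finite-dimensional subspaces that pins $\pi(D)$ inside $V$ after passing to the closure.
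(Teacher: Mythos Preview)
Your argument is correct: passing to the quotient $Y/Z$, using that $\pi(D)$ is dense and contained in the finite-dimensional (hence closed) span of the $\pi(d_i)$, is the standard and clean way to establish $\dim(Y/Z)\leq k$. The paper does not supply its own proof of this lemma but simply cites \cite[Lemma 3.4]{Rabier}, so there is no in-paper argument to compare against; your quotient-space proof is exactly the natural one and would serve as a self-contained replacement for that citation.
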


With the previous results at hand, we are now in the position to prove the invariance result for the Fredholm index of non-smooth pseudodifferential operators, see Theorem \ref{thm:invarianceOfIndex}. The main idea is taken of \cite[Theorem 3.5]{Rabier}, where the invariance of the Fredholm index is proved for non-smooth differential operators.  
Since the Fredholm index is just defined for Fredholm operators, we first need to apply Theorem \ref{thm:Fredholmproperty}. On account of Theorem \ref{thm:Fredholmproperty} the Fredholm property of a suitable non-smooth pseudodifferential operator $a(x,D_x):H^{m+s}_p(\Rn)\rightarrow H^s_p(\Rn)$ is invariant with respect to $p \in [1,\infty)$ and $s \in \R$ fulfilling the assumptions of the continuity results for non-smooth pseudodifferential operators, cf. \cite[Theorem 2.7 and Theorem 4.2]{Marschall}. 
Then it remains to prove the invariance of the Fredholm index for such $p$ and $s$. Due to the regularity result of Lemma \ref{lemma:Lemma3.3}, we immediately obtain the invariance of the kernel of $a(x,D_x)$ with respect to $p$ and $s$. Additionally a combination of Lemma \ref{lemma:Lemma3.3} and the estimate of the codimension of the image of $a(x,D_x)$, cf. Lemma \ref{lemma:LemmaEstimateOfTheCodimension} provides the invariance of the codimension of $a(x,D_x):H^{m+s}_p(\Rn)\rightarrow H^s_p(\Rn)$ with respect to $p$ and $s$. 
A combination of those facts yields the claim of Theorem \ref{thm:invarianceOfIndex} for $l=1$. The other cases can be proved in the same way.

\begin{proof}[Proof of Theorem \ref{thm:invarianceOfIndex}:]
  On account of Theorem \ref{thm:Fredholmproperty} the operator $A^s_p$ is a Fredholm operator for all $p \in [1, \infty)$ and $p=2$ if $\rho \neq 1$ and $(1-\rho)\frac{n}{p}-(1-\delta)(\tilde{m}+\tau)<s<\tilde{m}+\tau$. Now let $p,q \in [1, \infty)$ with $p,q=2$ if $\rho \neq 1$ and $(1-\rho)\frac{n}{p}-(1-\delta)(\tilde{m}+\tau)<s,r<\tilde{m}+\tau$ be arbitrary. 
  It remains to show $\ind(A^s_p) = \ind(A^r_q)$. We immediately get by applying Lemma \ref{lemma:Lemma3.3} with $\Phi=0$ that $\ker(A^s_p)=\ker(A^r_q)$. Setting $k:= \codim \Im(A^s_p)$ we can choose $\Phi_1, \ldots, \Phi_k \in \left( H^s_p(\Rn) \right)^l$ such that $\Span \{ \Phi_1, \ldots, \Phi_k\} \oplus \Im(A^s_p)= \left( H^s_p(\Rn)\right)^l$. On account of the density of $C^{\infty}_c(\Rn)  \subseteq  H^s_p(\Rn) $ we even can assume that $\Phi_1, \ldots, \Phi_k \in \left( C_c^{\infty}(\Rn) \right)^l$. In order to get $\codim \Im(A^r_q) \leq k$, we just need to show that for all $\Phi \in \left( C^{\infty}_c(\Rn) \right)^l$ there are some $ \mu_1, \ldots, \mu_k \in \C $ such that
  \begin{align}\label{70e}
    \Phi- \sum_{j=1}^{k} \mu_j \Phi_j \in \Im(A^r_q) 
  \end{align}
  due to Lemma \ref{lemma:LemmaEstimateOfTheCodimension} applied on $X= \left( H^{m+r}_q(\Rn) \right)^l$, $Y= \left( H^r_q(\Rn) \right)^l$, $Z= \Im(A^r_q)$ and $D=\left( C_c^{\infty }(\Rn)\right)^l$.
  In order to verify (\ref{70e}) we take some fixed but arbitrary  $\Phi \in \left( C^{\infty}_c(\Rn) \right)^l$. Because of the choice of $\Phi_1, \ldots, \Phi_k$, there are some $ \mu_1, \ldots, \mu_k \in \C$ such that 
  \begin{align*}
    \Phi- \sum_{j=1}^{k} \mu_j \Phi_j \in \Im(A^s_p). 
  \end{align*}
  Hence there is a $u \in \left( H^{m+s}_p(\Rn) \right)^l$ with $A^s_p u =  \Phi- \sum_{j=1}^{k} \mu_j \Phi_j \in \left( C^{\infty}_c (\Rn)\right)^l$. Due to Lemma \ref{lemma:Lemma3.3} we therefore get $u \in \left( H^{m+r}_q(\Rn) \right)^l$, which provides (\ref{70e}). Changing the roles of $p$ and $q$ in the proof of $\codim \Im(A^r_q) \leq k$ provides $\codim \Im(A^r_q) \geq k$. A combination of $\ker(A^s_p) = \ker(A^r_q)$ and $\codim \Im(A^r_q) = \codim \Im(A^s_p)$ yields the claim.
\end{proof}


\section{Spectral Invariance of Non-Smooth Pseudodifferential Operators}\label{section:spectralInvariance}

 In this section we present the proof of the improved spectral invariance result for non-smooth pseudodifferential operators. 
An important role in this proof plays the following well known fact: The set of all invertible linear mappings is open. In combination with the spectral invariance result for smooth pseudodifferential operators, cf. Theorem \ref{thm:spectralInvarianceGlatterFall}, this enables us to check the following result for a given non-smooth pseudodifferential operator with symbol $a$, which is invertible, seen as a map form $H^{m+s}_p(\Rn)$ to $H^s_p(\Rn)$ for some suitable $p$ and $s$:
All smooth pseudodifferential operators with symbols belonging to a certain neighbourhood of $a$ are invertible as a map from $H^{m+r}_q(\Rn)$ to $H^r_q(\Rn)$ for all $q\in [1, \infty)$ and $r \in \R$. The matrix-valued case can be proved in the same way. 

For all $r \in \R$ and $1\leq q<\infty$ we write $B^r_q$ instead of a pseudodifferential operator $b(x,D_x)$, if we want to emphasize, that $b(x,D_x)$ is a map from $\left(H^{m+r}_q(\Rn)\right)^{l}$ to $\left(H^{r}_q(\Rn)\right)^{l}$. 
 
\begin{lemma}\label{lemma:GlatteInvertiblePDOinUmgebung}
  Let $\tilde{m} \in \N_0$, $0<\tau<1$, $m \in \R$ and $0 \leq \delta \leq  \rho \leq 1$ with $\rho>0$, $\delta <1$. Additionally let $a \in C^{\tilde{m}, \tau} S^{m}_{\rho, \delta}(\RnRn; \mathscr{L}(\C^l))$ and $b \in S^{m}_{\rho, \delta}(\RnRn;\mathscr{L}(\C^l))$.  We assume that $a(x,D_x):\left( H^{m+s}_p(\Rn) \right)^{l} \rightarrow \left(H^{s}_p(\Rn)\right)^{l}  $ is invertible for some $p \in [1, \infty)$ with $(1-\rho)\frac{n}{p}-(1-\delta)(\tilde{m}+\tau) < s < \tilde{m}+\tau$. In case $\rho \neq 1$ we assume $p=2$.
  Then there is some $k \in \N_0$ and some $R>0$ such that if $|a(x,\xi)-b(x,\xi)|^{(m)}_{k, C^{\tilde{m}, \tau} S^{m}_{\rho, \delta}(\RnRn; \mathscr{L}(\C^l))} < R$ the operator $B^r_q$ is invertible for every $q \in [1,\infty)$ and $r \in \R$, where $q=2$ if $\rho \neq 1$ is needed.
\end{lemma}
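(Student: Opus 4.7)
The strategy is a short three-step combination of (i) the continuity estimate Theorem \ref{thm:BoundednessResultNonSmooth}, (ii) the openness of the invertible elements in a space of bounded linear operators, and (iii) the smooth spectral invariance Theorem \ref{thm:spectralInvarianceGlatterFall}. The idea is to use the closeness of $b$ to $a$ in the symbol seminorm to transfer invertibility at the one level $(s,p)$ from $A^s_p$ to $B^s_p$, and then, because $b$ is smooth, to propagate this single invertibility to every pair $(r,q)$ via the smooth result.

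First, by the embedding \eqref{eq:GlattesSymbolIstNichtglatt} we have $b-a\in C^{\tilde{m},\tau}S^{m}_{\rho,\delta}(\RnRn;\mathscr{L}(\C^l))$. Under the assumptions of the lemma ($p=2$ whenever $\rho\neq 1$, and $p$ arbitrary when $\rho=1$), the exponent $k_p=(1-\rho)n|1/2-1/p|$ appearing in Theorem \ref{thm:BoundednessResultNonSmooth} vanishes. Applied to the symbol $b-a$, that theorem therefore yields some $k\in\N_0$ and a constant $C=C_{s,l,p}>0$, independent of $b$, such that
\begin{align*}
\bigl\|B^s_p - A^s_p\bigr\|_{\mathscr{L}((H^{m+s}_p)^l,(H^s_p)^l)}
\leq C\,|b-a|^{(m)}_{k,\,C^{\tilde{m},\tau}S^{m}_{\rho,\delta}(\RnRn;\mathscr{L}(\C^l))}.
\end{align*}

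Second, since $A^s_p$ is invertible in $\mathscr{L}((H^{m+s}_p)^l,(H^s_p)^l)$ and the set of invertible operators in a space of bounded linear operators between Banach spaces is open, there exists $\eta>0$ such that every $T\in\mathscr{L}((H^{m+s}_p)^l,(H^s_p)^l)$ with $\|T-A^s_p\|<\eta$ is invertible. Setting $R:=\eta/C$, the assumption $|b-a|^{(m)}_{k,\ldots}<R$ together with the above estimate then forces $B^s_p$ to be invertible.

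Third, $b\in S^{m}_{\rho,\delta}(\RnRn;\mathscr{L}(\C^l))$ is a smooth symbol, and the assumptions on $\rho,\delta,p$ match those of Theorem \ref{thm:spectralInvarianceGlatterFall}. Hence the invertibility of $B^s_p$ propagates to the invertibility of $B^r_q:(H^{m+r}_q)^l\to(H^r_q)^l$ for every $r\in\R$ and every $q\in[1,\infty)$ (with $q=2$ if $\rho\neq 1$), which is the desired conclusion. No step presents any genuine difficulty; the only point requiring care is that $k$ and $C$ must not depend on $b$, and this is guaranteed because Theorem \ref{thm:BoundednessResultNonSmooth} is a uniform seminorm estimate on the whole symbol class rather than a statement about a single operator.
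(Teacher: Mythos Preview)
Your proof is correct and follows essentially the same approach as the paper: apply Theorem~\ref{thm:BoundednessResultNonSmooth} to the difference $b-a$ to control the operator norm by the symbol seminorm, use openness of the invertibles to get invertibility of $B^s_p$, and then invoke Theorem~\ref{thm:spectralInvarianceGlatterFall} to propagate to all $(r,q)$. Your explicit remark that $k_p=0$ under the hypotheses and that $k,C$ are uniform in $b$ makes the argument slightly more transparent than the paper's version, but the logic is identical.
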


\begin{proof}
  Since the set of all invertible functions of $\mathscr{L}\left( (H_p^{s+m})^l, (H_p^{s})^l \right)$ is open, the invertibility of $a(x,D_x)$ provides the invertibility of all pseudodifferential operators with symbols $b \in S^{m}_{\rho, \delta}(\RnRn; \mathscr{L}(\C^l))$ if 
  \begin{align}\label{eq11}
    \| a(x,D_x) - b(x,D_x) \|_{\mathscr{L}\left( (H_p^{s+m})^l, (H_p^{s})^l \right)} \leq \e
  \end{align}
  for some $\e>0$.
   Now let  $(1-\rho)\frac{n}{p}-(1-\delta)(\tilde{m}+\tau) < s < \tilde{m}+\tau$ be arbitrary. On account of Theorem \ref{thm:BoundednessResultNonSmooth} there is a $k \in \N_0$ such that
  \begin{align*}
    \left\| \tilde{a}(x,D_x) \right\|_{\mathscr{L}\left( (H_p^{s+m})^l, (H_p^{s})^l \right)} \leq C_s |\tilde{a}|^{(m)}_{k, C^{\tilde{m}, \tau} S^m_{\rho,\delta}(\RnRn; \mathscr{L}(\C^l))} 
  \end{align*}
  for all $\tilde{a} \in  C^{\tilde{m}, \tau} S^m_{\rho,\delta}(\RnRn; \mathscr{L}(\C^l))$. 
  Consequently there is a $R>0$ such that for all $b \in S^{m}_{\rho, \delta}(\RnRn; \mathscr{L}(\C^l))$ with
  \begin{align}\label{eq12}
    |a(x,\xi)-b(x,\xi)|^{(m)}_{k, C^{\tilde{m}, \tau} S^{m}_{\rho, \delta}(\RnRn; \mathscr{L}(\C^l))} < R
  \end{align}
 inequality (\ref{eq11}) holds. For the symbols  $b \in S^{m}_{\rho, \delta}(\RnRn; \mathscr{L}(\C^l))$ fulfilling (\ref{eq12}) the corresponding pseudodifferential operator
 $b(x,D_x)$ is invertible in $\mathscr{L}\left( (H_p^{s+m})^l, (H_p^{s})^l \right)$. Due to Theorem \ref{thm:spectralInvarianceGlatterFall}, $B_q^r$ is invertible for all $r \in \R$ and all $1\leq q<\infty$, where $q=2$ if $\rho \neq 1$.
\end{proof}

With all results of the previous sections we are now in the position to show Theorem \ref{thm:SpectralInvarianceNonSmoothCase}.
We follow the proof of Theorem 4.4 in \cite{Rabier} and prove this theorem by contradiction. Consequently we  assume that $A^r_{q_0}$ is not invertible for some $q\in [1, \infty)$.  According to the next lemma, which was proved in \cite[Lemma 4.3]{Rabier}, we cannot find a neighbourhood of $A^r_{q_0}$ in $\mathcal{L}(H^{m+r}_{q_0}(\Rn), H^r_{q_0}(\Rn))$, in which all operators are invertible. 
However Lemma \ref{lemma:GlatteInvertiblePDOinUmgebung} and the result about the invariance of the Fredholm index, cf. Theorem \ref{thm:invarianceOfIndex} lead to a contradiction to that conclusion.

\begin{lemma}\label{lemma:Rabier}
  Let $X$ and $Y$ be complex Banach spaces and let $T \in \mathscr{L}(X, Y)$ be  Fredholm of index $0$ and not invertible. Then there is an open ball $B_{\nu}(0) \subseteq \mathscr{L}(X, Y)$ with the following property: Given $H \in B_{\nu}(0)$ such that $T+H$ is invertible and $\e >0$, there is some $\delta \in (0, \e]$ such that if $S \in B_{\delta}(T) \subseteq \mathscr{L}(X, Y)$, then $S+zH$ is not invertible for some $z \in \C$ with $|z| < \e$.
\end{lemma}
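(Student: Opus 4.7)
The plan is to reduce the question to the zero set of a scalar holomorphic function by a Grushin (Schur complement) reduction, and then to apply Rouch\'e's theorem. Since $T$ is Fredholm of index $0$ and not invertible, $k := \dim \ker T = \dim Y/\Im T \geq 1$. I fix an isomorphism $R_- \colon \C^k \to \ker T$ and a surjection $R_+ \colon Y \to \C^k$ with $\ker R_+ = \Im T$; then the block operator
\[
\mathcal{T} := \begin{pmatrix} T & R_- \\ R_+ & 0 \end{pmatrix} \colon X \times \C^k \to Y \times \C^k
\]
is bijective, hence invertible. Openness of the invertible set yields $\nu_0 > 0$ such that $\mathcal{A} := \bigl(\begin{smallmatrix} A & R_- \\ R_+ & 0 \end{smallmatrix}\bigr)$ is invertible for every $A \in \mathscr{L}(X,Y)$ with $\|A - T\| < \nu_0$. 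Writing the inverse in block form
\[
\mathcal{A}^{-1} = \begin{pmatrix} E(A) & E_+(A) \\ E_-(A) & E_{+-}(A) \end{pmatrix},
\]
the Grushin principle -- a direct consequence of the four block identities in $\mathcal{A}\mathcal{A}^{-1} = \mathcal{A}^{-1}\mathcal{A} = I$ -- states that $A$ is invertible if and only if the $k \times k$ matrix $E_{+-}(A)$ is invertible, with explicit inverse $E_{+-}(A)^{-1} = -R_+ A^{-1} R_-$ in that case. Moreover $A \mapsto E_{+-}(A)$ is continuous and depends holomorphically on $A$ along any holomorphic one-parameter family.

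Set $\nu := \nu_0/3$ and let $H \in B_\nu(0)$ with $T + H$ invertible, and let $\e > 0$. For $\|S - T\| < \nu_0/3$ and $|z| \leq 2$ one has $\|(S + zH) - T\| < \nu_0$, so I can define
\[
\psi_S(z) := \det E_{+-}(S + zH),
\]
a function that is holomorphic in $z$ on $|z| \leq 2$, jointly continuous in $S$, and that vanishes precisely where $S + zH$ fails to be invertible. The crucial observation is that $\psi_T(0) = 0$ (because $T$ is not invertible) while $\psi_T(1) \neq 0$ (because $T + H$ is), so $\psi_T \not\equiv 0$ on the disk and its zero at the origin is isolated. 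I then fix $r \in (0, \min(\e, 2))$ so small that $\psi_T$ has no zero in $0 < |z| \leq r$, and set $m := \min_{|z|=r}|\psi_T(z)| > 0$. Continuity of $\psi_S$ in $S$ produces $\delta \in (0, \min(\e, \nu_0/3)]$ such that $\|S - T\| < \delta$ implies $|\psi_S(z) - \psi_T(z)| < m \leq |\psi_T(z)|$ for all $|z| = r$. Rouch\'e's theorem then gives that $\psi_S$ has at least as many zeros (counted with multiplicity) inside $|z| < r$ as $\psi_T$ does, hence at least one; any such zero $z_0$ satisfies $|z_0| < r \leq \e$ and makes $S + z_0 H$ non-invertible.

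The main obstacle is the clean setup of the Grushin reduction and the verification that $A \mapsto E_{+-}(A)$ carries the continuity and holomorphic dependence advertised above; once the four block equations are written out, the identities $E_{+-}(A)^{-1} = -R_+ A^{-1} R_-$ and $A^{-1} = E(A) - E_+(A) E_{+-}(A)^{-1} E_-(A)$ make the equivalence ``$A$ invertible $\Leftrightarrow$ $E_{+-}(A)$ invertible'' transparent, and the regularity claims follow from continuity (resp.\ holomorphy) of operator inversion on the open set of invertible operators. All remaining ingredients -- openness of the invertible set in $\mathscr{L}(X \times \C^k, Y \times \C^k)$, continuity of the determinant on $\C^{k \times k}$, and Rouch\'e's theorem -- are standard.
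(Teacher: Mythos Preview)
The paper does not give its own proof of this lemma; it merely cites Rabier \cite[Lemma~3.4]{Rabier}. Your Grushin reduction followed by Rouch\'e's theorem is a standard and perfectly valid route to this kind of perturbation statement, and the overall logic---reduce invertibility of $A$ to invertibility of a $k\times k$ matrix $E_{+-}(A)$, pass to the holomorphic determinant $\psi_S(z)=\det E_{+-}(S+zH)$, and use that the isolated zero of $\psi_T$ at $z=0$ survives small perturbations in $S$---is sound.

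There is, however, a genuine slip in your setup of the auxiliary operators that you should repair. With $\mathcal{T}=\bigl(\begin{smallmatrix} T & R_-\\ R_+ & 0\end{smallmatrix}\bigr)\colon X\times\C^k\to Y\times\C^k$, the $(1,2)$ block must map $\C^k\to Y$ and the $(2,1)$ block must map $X\to\C^k$. Your choices $R_-\colon\C^k\to\ker T\subseteq X$ and $R_+\colon Y\to\C^k$ therefore do not typecheck. The correct choices are the ``dual'' ones: take $R_-\colon\C^k\to Y$ an isomorphism onto a closed complement of $\Im T$ in $Y$, and $R_+\colon X\to\C^k$ a bounded surjection whose kernel is a closed complement of $\ker T$ in $X$ (equivalently, $R_+$ restricts to an isomorphism on $\ker T$). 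With these choices $\mathcal{T}$ is invertible, the Schur complement identities you wrote down hold verbatim, $E_{+-}(T)$ fails to be invertible (so $\psi_T(0)=0$), and $E_{+-}(T+H)$ is invertible (so $\psi_T(1)\neq0$). After this correction the remainder of your argument---choice of $\nu=\nu_0/3$, holomorphy of $\psi_S$ on $|z|\le 2$, isolation of the zero at the origin, the choice of $r\in(0,\min(\e,2))$ and of $\delta\le\min(\e,\nu_0/3)$ by joint continuity on the compact circle $|z|=r$, and the Rouch\'e step---is correct as written.
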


\begin{proof}[Proof of Theorem \ref{thm:SpectralInvarianceNonSmoothCase}]
    We assume that $A^r_{q_0}$ is not invertible for some ${q_0} \in [1, \infty)$ and $r \in \R$ with $(1-\rho)\frac{n}{p}-(1-\delta)(\tilde{m}+\tau) < r < \tilde{m}+\tau$. Since $A^s_p$ is invertible, $A^s_p$ is a Fredholm operator with index $0$. Due to Theorem \ref{thm:invarianceOfIndex} we know, that $A^r_{q_0}$ is also a Fredholm operator with index $0$. Moreover we choose $R>0$ and $k \in \N_0$ as in Lemma \ref{lemma:GlatteInvertiblePDOinUmgebung} and $\nu>0$ as in Lemma \ref{lemma:Rabier} with $X:= \left( H_{q_0}^{m+r} \right)^l$, $Y:= \left( H_{q_0}^{r} \right)^l$ and $T:=A^r_{q_0}$. By means of the proof of Lemma \ref{lemma:GlatteInvertiblePDOinUmgebung} and  Lemma \ref{DichtheitGlatteSymbolklasseInNonSmoothOne} in case $\delta=0$ and Lemma  \ref{lemma:KonvergenzFürDeltaUngleich0} else there is a symbol $b \in S^m_{\rho,\delta}(\RnRn; \mathscr{L}(\C^l))$ such that 
$$|a-b|^{(m)}_{k, C^{\lfloor t \rfloor, t- \lfloor t \rfloor} S^{m}_{\rho, \delta}(\RnRn; \mathscr{L}(\C^l))} < R$$ 
with 
\begin{itemize}
	\item $t:=\max\{ r+(\tilde{m}+\tau -r)/2, s+(\tilde{m}+\tau -s)/2 \}$ and 
	\item $ \left\| b(x,D_x)-a(x,D_x)\right\|_{\mathscr{L}\left( (H_{q_0}^{m+r})^l; (H_{q_0}^r)^l \right)} < \nu$.
\end{itemize}
 An application of Lemma \ref{lemma:GlatteInvertiblePDOinUmgebung} provides the invertibility of
  \begin{align*}
    b(x,D_x) = a(x, D_x) + \left( b(x,D_x) - a(x,D_x) \right) \in \mathscr{L}\left( (H_{q_0}^{m+r})^l; (H_{q_0}^r)^l \right).
  \end{align*}
  Hence $a(x, D_x) + \tilde{a}(x, D_x) \in \mathscr{L}\left( (H_{q_0}^{m+r})^l; (H_{q_0}^r)^l \right)$ is invertible if 
  \begin{align}\label{eq13}
    \| \tilde{a}(x, D_x) - \left( b(x,D_x) - a(x,D_x) \right) \|_{\mathscr{L}\left( (H_{q_0}^{m+r})^l; (H_{q_0}^r)^l \right)}
  \end{align}
  is small enough. Because of Lemma \ref{DichtheitGlatteSymbolklasseInNonSmoothOne} in case $\delta=0$ and Lemma \ref{lemma:KonvergenzFürDeltaUngleich0} else, there is a symbol $\tilde{a} \in S^m_{\rho,\delta}(\RnRn; \mathscr{L}(\C^l))$ such that
  \begin{align*}
    &\| \tilde{a}(x, D_x) - \left( b(x,D_x) - a(x,D_x) \right) \|_{\mathscr{L}\left( (H_{q_0}^{m+r})^l; (H_{q_0}^r)^l \right)} \\
    &\qquad \qquad \qquad <  \nu - \| b(x, D_x) - a(x, D_x) \|_{  \mathscr{L}\left( (H_{q_0}^{m+r})^l; (H_{q_0}^r)^l \right)}
  \end{align*}
  is so small that $a(x, D_x) + \tilde{a}(x, D_x) \in \mathscr{L}\left( (H_{q_0}^{m+r})^l; (H_{q_0}^r)^l \right)$ is invertible. Consequently
  \begin{align*}
    \| \tilde{a}(x, D_x) \|_{  \mathscr{L}\left( (H_{q_0}^{m+r})^l; (H_{q_0}^r)^l \right)}
    < \nu.
  \end{align*}
  Now let us choose $\e:= \frac{R}{2 \max\{1, \left| \tilde{a}(x, D_x) \right|^{(m)}_{k, C^{\lfloor t \rfloor, t- \lfloor t \rfloor} S^{m}_{\rho, \delta}(\RnRn; \mathscr{L}(\C^l))}\}}$.  Lemma \ref{lemma:Rabier} provides the existence of a $\tilde{\delta} \in (0, \e]$ with the following property: For each $\tilde{b}(x, D_x) \in \op S^m_{\rho,\delta}(\RnRn; \mathscr{L}(\C^l))$ satisfying
  \begin{align}\label{eq20}
    \|\tilde{b}(x, D_x)-a(x, D_x)\|_{ \mathscr{L}\left( (H_{q_0}^{m+r})^l; (H_{q_0}^r)^l \right)} < \tilde{\delta}
  \end{align}
  and for some $z \in \C$ with $|z| <\e$ the operator 
  \begin{align}\label{eq21}
    \tilde{b}(x, D_x)+z\tilde{a}(x, D_x) \qquad \text{ is not invertible.}
  \end{align}
 Due to Lemma \ref{DichtheitGlatteSymbolklasseInNonSmoothOne} in  case $\delta=0$ and Lemma \ref{lemma:KonvergenzFürDeltaUngleich0} else there is a $\tilde{b}(x, D_x) \in \op  S^m_{\rho,\delta}(\RnRn; \mathscr{L}(\C^l))$ such that (\ref{eq20}) holds and in addition $| \tilde{b}-a |^{(m)}_{k, C^{\lfloor t \rfloor, t- \lfloor t \rfloor} S^{m}_{\rho, \delta}(\RnRn; \mathscr{L}(\C^l))} <R/2$. Hence
  $| \tilde{b} + za-a |^{(m)}_{k, C^{\lfloor t \rfloor, t- \lfloor t \rfloor} S^{m}_{\rho, \delta}(\RnRn; \mathscr{L}(\C^l))} <R$. An application of Lemma \ref{lemma:GlatteInvertiblePDOinUmgebung} yields the invertibility of the operator $\tilde{b}(x, D_x) + z \tilde{a}(x, D_x) \in  \mathscr{L}\left( (H_{q_0}^{m+r})^l; (H_{q_0}^r)^l \right)$. Since this is a contradiction to (\ref{eq21}), we conclude the proof. 
\end{proof}

As a direct consequence of the Theorem \ref{thm:SpectralInvarianceNonSmoothCase} we obtain

\begin{kor}
  Let $\tilde{m} \in \N$, $m \in \R$,  $0 \leq \delta < \rho \leq 1$, $0<\tau<1$ with $\tilde{m}+\tau> \frac{1-\rho}{1-\delta}\cdot \frac{n}{2}$ if $\rho \neq 1$ and $q \in [1, \infty)$. Moreover let $a \in C^{\tilde{m}, \tau} \tilde{S}^{m}_{\rho, \delta}(\RnRn; \mathscr{L}(\C^l))$ for $ \delta=0$ and $a \in C_{unif}^{\tilde{m},\tau} S^{m}_{\rho, \delta}(\RnRn; \mathscr{L}(\C^l)) \cap C^{\tilde{m}, \tau} \tilde{S}^{m}_{\rho, \delta}(\RnRn; \mathscr{L}(\C^l))$ else fulfilling the assumptions 1) and 2) of Theorem \ref{thm:invarianceOfIndex}. Then the spectrum of $a(x, D_x)$ seen as an operator from $\left( H_q^{m+r}\right)^l$ to $\left( H_q^{r}\right)^l$, where $q=2$ if $\rho \neq 1$, is independent of $r\in \R$ with $-(1-\delta)(\tilde{m}+\tau) < r< \tilde{m}+\tau$. In case $\rho =1$ the spectrum is even independent of $q \in [1, \infty)$.
\end{kor}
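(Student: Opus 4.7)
The plan is to apply Theorem \ref{thm:SpectralInvarianceNonSmoothCase} to the shifted symbol $b_\lambda(x,\xi):=a(x,\xi)-\lambda I_l$ for each $\lambda\in\mathbb{C}$, where $I_l$ denotes the $l\times l$ identity matrix. By definition, $\lambda$ lies in the resolvent set of $a(x,D_x)\colon (H^{m+r}_q)^l\to(H^r_q)^l$ precisely when $b_\lambda(x,D_x)$ is invertible between those spaces, so the claim reduces to showing that this invertibility is independent of the admissible pair $(r,q)$.

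First I would verify that $b_\lambda$ inherits from $a$ all the assumptions of Theorem \ref{thm:SpectralInvarianceNonSmoothCase}. The constant matrix $\lambda I_l$ lies in $S^m_{\rho,\delta}(\RnRn;\mathscr{L}(\mathbb{C}^l))$ (in the relevant range $m\geq 0$), and all its $\xi$-derivatives and positive-order $x$-derivatives vanish; consequently $b_\lambda$ belongs to the same non-smooth class as $a$, is slowly varying in the sense of Definition \ref{Def:SlowlyVaryingSymbols}, and lies in $C^{\tilde m,\tau}_{unif}S^m_{\rho,\delta}$ whenever $a$ does. Conditions 0) and 2) of Theorem \ref{thm:invarianceOfIndex} transfer without change, with $b_\lambda(\infty,\xi)=a(\infty,\xi)-\lambda I_l$.

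The main obstacle is condition 1) for $b_\lambda$, that is $|\det b_\lambda(x,\xi)|\langle\xi\rangle^{-ml}\geq C_\lambda>0$ for $|x|+|\xi|$ sufficiently large. In the region where $|\xi|\to\infty$, the determinant expansion $\det b_\lambda(x,\xi)=\det a(x,\xi)+\sum_{k=1}^{l}(-\lambda)^k p_k(a(x,\xi))$ with $p_k(a(x,\xi))$ of order $m(l-k)$, together with condition 1) for $a$, yields the estimate because the corrections are of strictly smaller order in $\langle\xi\rangle$. In the region where $|x|\to\infty$ and $\xi$ is bounded, condition 2) forces $a(x,\xi)\to a(\infty,\xi)$ uniformly on bounded $\xi$-sets, so the requirement becomes $\lambda\notin\Sigma_\infty:=\bigcup_{\xi\in\Rn}\operatorname{spec}(a(\infty,\xi))$.

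For $\lambda\notin\Sigma_\infty$, Theorem \ref{thm:SpectralInvarianceNonSmoothCase} applied to $b_\lambda$ directly gives the desired $(r,q)$-independence of invertibility. For $\lambda\in\Sigma_\infty$ it remains to argue separately that $b_\lambda(x,D_x)$ is not invertible in any admissible space: fixing $\xi_0\in\Rn$ and $v\in\mathbb{C}^l\setminus\{0\}$ with $(a(\infty,\xi_0)-\lambda I_l)v=0$, a Weyl-type sequence $u_k(x):=k^{-n/q}\chi(x/k)e^{ix\cdot\xi_0}v$ with $\chi\in C_c^\infty(\Rn\setminus\{0\})$ has $\|u_k\|_{(H^{m+r}_q)^l}$ bounded below and $\|b_\lambda(x,D_x)u_k\|_{(H^r_q)^l}\to 0$ by condition 2) and the fact that each derivative of $\chi(\cdot/k)$ gains a factor $1/k$. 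Combining the two cases proves the corollary; in the case $\rho=1$ both Theorem \ref{thm:SpectralInvarianceNonSmoothCase} and the Weyl-sequence construction are $q$-uniform on $[1,\infty)$, giving the stronger conclusion claimed.
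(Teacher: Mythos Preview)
The paper gives no proof whatsoever: the corollary is stated as ``a direct consequence of Theorem~\ref{thm:SpectralInvarianceNonSmoothCase}'' with no further argument. Your approach --- apply Theorem~\ref{thm:SpectralInvarianceNonSmoothCase} to the shifted symbol $b_\lambda=a-\lambda I_l$ --- is exactly the natural way to make that sentence precise, and you correctly notice a point the paper glosses over: condition~1) of Theorem~\ref{thm:invarianceOfIndex} need not hold for $b_\lambda$, so Theorem~\ref{thm:SpectralInvarianceNonSmoothCase} does not automatically apply to every $\lambda$, and some $\lambda$ must be handled separately. In that sense your proposal is \emph{more} detailed than the paper.

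There are, however, genuine gaps in your treatment of the exceptional set. First, your large-$|\xi|$ argument relies on the corrections $p_k(a(x,\xi))$ being of \emph{strictly} smaller order than $\det a(x,\xi)$; this holds only when $m>0$. For $m=0$ all terms in the determinant expansion are of order zero, so condition~1) for $b_\lambda$ can fail as $|\xi|\to\infty$ as well, and your set $\Sigma_\infty=\bigcup_\xi\operatorname{spec}(a(\infty,\xi))$ does not capture all such $\lambda$. For $m<0$ the constant $\lambda I_l$ is not in $S^m_{\rho,\delta}$ at all, so $b_\lambda$ has order $0$ rather than $m$ and the framework has to be adjusted; your parenthetical ``in the relevant range $m\ge 0$'' does not settle this since the corollary is stated for arbitrary $m\in\R$.

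Second, the Weyl-sequence step is more delicate than you indicate. You need $\|b_\lambda(x,D_x)u_k\|_{(H^r_q)^l}\to 0$, not merely the $L^q$-norm; since the coefficients are only $C^{\tilde m,\tau}$ in $x$, the operator may spread frequencies, and the observation that derivatives of $\chi(\cdot/k)$ gain a factor $1/k$ controls spatial oscillation but not the $H^r_q$-norm for $r>0$. One clean way to close this is to use frequency localization of $u_k$ near $\xi_0$ together with the mapping estimate \eqref{eq:Continuity} applied to $b_\lambda(x,\xi)-b_\lambda(\infty,\xi_0)$ on a spectrally cut-off piece, but that has to be written out.
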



\begin{thebibliography}{10}

\bibitem{PDO}
H.~Abels.
\newblock {\em Pseudodifferential and Singular Integral Operators: An
  Introduction with Applications}.
\newblock De Gruyter, Berlin/Boston, 2012.

\bibitem{Paper1}
H.~{Abels} and C.~{Pfeuffer}.
\newblock {Characterization of non-smooth pseudodifferential operators}.
\newblock {\em Journal of Fourier Analysis and Applications}, 24(2):317-415, 2018.

\bibitem{Paper2}
H.~{Abels} and C.~{Pfeuffer}.
\newblock {\em Spectral invariance of non-smooth pseudodifferential operators}.
\newblock {\em Integral Equations Operator Theory}, 86(1):41–70, 2016.

\bibitem{Paper3}
H.~{Abels} and C.~{Pfeuffer}.
\newblock {\em Fredholm Property of Non-Smooth Pseudodifferential Operators}.
\newblock  {\em Math. Nachr.}, 293(5):822-846, 2020.

\bibitem{AlvarezHounie}
J.~{Alvarez} and J.~{Hounie}.
\newblock {\em Spectral invariance and tameness of pseudo-differential operators on weighted Sobolev spaces}.
\newblock {\em Journal of Operator Theory}, 30:41--67, 1993.

\bibitem{Beals}
R.~{Beals}.
\newblock {\em Characterization of pseudodifferential operators and applications}.
\newblock {\em Duke Math. J.}, 44:45--57 1977.


\bibitem{BerghLofstrom}
J.~Bergh and J.~L\"{o}fstr\"{o}m.
\newblock {\em Interpolation spaces. {A}n introduction}.
\newblock Springer-Verlag, Berlin-New York, 1976.




\bibitem{CoifmanMeyer}
R.R.~{Coifman} and Y.~{Meyer}.
\newblock {\em Au dela des op\'erateurs pseudo-diff\'erentiels}.
\newblock {Asterisque 57}, 1978.

%
%
%

\bibitem{Grubb}
G.~{Grubb}.
\newblock {\em Parameter-elliptic and parabolic pseudodifferential boundary problems in global $L\sb p$ Sobolev spaces}.
\newblock {\em Mathematische Zeitschrift}, 218:43--90, 1995.

%






\bibitem{Kryakvin}
V.D.~{Kryakvin}.
\newblock {\em Characterization of pseudodifferential operators in H\"older-Zygmund spaces}.
\newblock {\em Differ. Equ.}, 49:306-312, 2013.

\bibitem{KumanoGo}
H.~Kumano-go.
\newblock {\em Pseudo-Differential Operators}.
\newblock MIT Press, Cambridge, Massachusetts, and London, 1974.


\bibitem{SchroheLeopold1992}
H.-G.~{Leopold} and E.~{Schrohe}.
\newblock {\em Spectral invariance for algebras of pseudodifferential operators on Besov spaces of variable order of differentiation}.
\newblock {\em Math. Nachr.}, 156:7--23, 1992.


\bibitem{SchroheLeopold1993}
H.-G.~{Leopold} and E.~{Schrohe}.
\newblock {\em Spectral invariance for algebras of pseudodifferential operators on {B}esov-{T}riebel-{L}izorkin spaces}.
\newblock {\em Manuscripta Math.}, 78(1):99--110, 1993.


\bibitem{SchroheLeopold1997}
H.-G.~{Leopold} and E.~{Schrohe}.
\newblock {\em Invariance of the $L_p$ spectrum for hypoelliptic operators}.
\newblock {\em Proc. Am. Math. Soc.}, 125:3679--3687, 1997.



\bibitem{LeopoldTriebel}
H.~G.~{Leopold} and H.~{Triebel}.
\newblock {\em Spectral invariance for pseudodifferential operators on weighted function spaces}.
\newblock {\em Manuscr. Math.}, 83:315--325, 1994.

\bibitem{Lunardi}
A.~{Lunardi}.
\newblock {\em Interpolation Theory}.
\newblock http://www.math.unipr.it/~lunardi/LectureNotes/SNS1999.pdf, 15.10.2014, 09:33.

\bibitem{LunardiSemigoups}
A.~{Lunardi}.
\newblock {\em Analytic semigroups and optimal regularity in parabolic problems}.
\newblock Birkh\"auser, Basel,  1995.


\bibitem{MarschallOnTheBoundednessAndCompactness}
J.~Marschall.
\newblock On the boundedness and compactness of nonregular pseudo-differential
  operators.
\newblock {\em Math. Nachr.}, 175:231--262, 1995.



\bibitem{Marschall}
J.~{Marschall}.
\newblock {\em Pseudodifferential operators with nonregular symbols of the class {$S^m_{\rho\delta}$}}.
\newblock {\em Comm. Partial Differential Equations}, 12:921--965, 1987.




\bibitem{Meyries}
M.~{Meyries} and M.~{Veraar}.
\newblock {\em Characterization of a class of embeddings for function spaces
with Muckenhoupt weights}.
\newblock {\em Arch. Math.}, 103:435–-449, 2014.






%
%

\bibitem{Diss}
C.~Pfeuffer.
\newblock Characterization of non-smooth pseudodifferential operators.
\newblock {urn:nbn:de:bvb:355-epub-31776. available in the internet:
  http://epub.uni-regensburg.de/31776/ [23.06.2015]}, May 2015.



\bibitem{Rabier2008}
P.J.~Rabier.
\newblock {\em On the {F}edosov-{H}\"{o}rmander formula for differential operators}.
\newblock {\em Integral Equations Operator Theory},62:555--574, 2008.



\bibitem{Rabier}
P.J.~Rabier.
\newblock {\em On the index and spectrum of differential operators on {$\R^{N}$}}.
\newblock {\em Proc. Amer. Math. Soc.},135:3875--3885, 2007.

\bibitem{Rudin}
W.~{Rudin}.
\newblock {\em Functional analysis}.
\newblock {McGraw-Hill Series in Higher Mathematics. New York etc.: McGraw-Hill Book Comp. XIII}, 1973.



\bibitem{RunstSickel}
T.~{Runst} and  W.~{Sickel}.
\newblock {\em Sobolev Spaces of Fractional Order, Nemytskij Operators, and Nonlinear Partial Differential Equations}.
\newblock de Gruyter Verlag, 1996.


\bibitem{Schrohe1990}
E.~{Schrohe}.
\newblock {\em Boundedness and spectral invariance for standard pseudodifferential operators on anisotropically weighted $L\sp p$-Sobolev spaces}.
\newblock {\em Integral Equations Oper. Theory},13:271--284 1990.

\bibitem{Schrohe1992}
E.~{Schrohe}.
\newblock {Spectral invariance, ellipticity, and the Fredholm property for
  pseudodifferential operators on weighted Sobolev spaces.}
\newblock {\em {Ann. Global Anal. Geom.}}, 10(3):237--254, 1992.



\bibitem{Taylor2}
M.E. Taylor.
\newblock {\em Pseudodifferential Operators and Nonlinear PDE}.
\newblock Birkhäuser Boston, Basel, Berlin, 1991.


\bibitem{TriebelInterpol}
H.~{Triebel}.
\newblock {\em Interpolation theory, function spaces, differential operators}.
\newblock VEB Deutscher Verlag der Wissenschaften, Berlin, 1978.

\bibitem{Tr}
H.~{Triebel}.
\newblock {\em Theory of Function Spaces}.
\newblock Birkhäuser Verlag, Basel, Boston, Stuttgart, 1983.


 \bibitem{Ueberberg}
J.~{Ueberberg}.
\newblock {\em Zur Spektralinvarianz von Algebren von Pseudodifferentialoperatoren
in der $L^p$-Theorie.}.
\newblock {\em Manuscripta Math.}, 61:459--475, 1988.





\end{thebibliography}

\end{document}